\def\listtodoname{List of Todos}
\def\listoftodos{\@starttoc{tdo}\listtodoname}
\definecolor{myred}{rgb}{0.75,0,0}
\definecolor{mygreen}{rgb}{0,0.5,0}
\definecolor{myblue}{rgb}{0,0,0.65}
\theoremstyle{plain}
  \newtheorem{thm}{Theorem}[section]
  \newtheorem{prop}[thm]{Proposition}
  \newtheorem{lem}[thm]{Lemma}
  \newtheorem{cor}[thm]{Corollary}
	\newtheorem{question}[thm]{Question}
\theoremstyle{definition}
  \newtheorem{defn}[thm]{Definition}
  \newtheorem{example}[thm]{Example}
\theoremstyle{remark}
	\newtheorem{rem}[thm]{Remark}
\numberwithin{equation}{section}
\newcommand\ssec{\subsection}
\newcommand\sssec{\subsubsection}
\newcommand\BH{{\mathbb H}}
\newcommand\BN{{\mathbb N}}
\newcommand\BC{{\mathbb C}}
\newcommand\BR{{\mathbb R}}
\newcommand\BQ{{\mathbb Q}}
\newcommand\BP{{\mathbb P}}
\newcommand\BZ{{\mathbb Z}}
\newcommand\Bk{{\Bbbk}}
\newcommand\sco{{\mathscr O}}
\newcommand\im{\text{im }}
\newcommand\proj{\text{Proj }}
\DeclareMathOperator{\ord}{ord}
\DeclareMathOperator\di{Div}
\newcommand\sx{\mathscr X}
\newcommand \subhalf[1]{\frac{{#1} - 1}{2{#1}}}
\newcommand{\halfcan}{L}
\DeclareMathOperator{\Supp}{Supp}
\DeclareMathOperator{\initial}{in_\prec}
\DeclareMathOperator{\Eff}{Eff}
\DeclareMathOperator{\sat}{sat}
\DeclareMathOperator{\newspan}{span}
\newcommand{\PSL}{\operatorname{PSL}}
\newcommand{\customlabel}[2]{%
   \protected@write \@auxout {}{\string \newlabel {#1}{{#2}{\thepage}{#2}{#1}{}} }%
   \hypertarget{#1}{#2}
}
\title{Spin canonical rings of log stacky curves}
\author{Aaron Landesman}
\address[Aaron Landesman]{Department of Mathematics, Harvard University}
\email{aaronlandesman@college.harvard.edu}
\author{Peter Ruhm}
\address[Peter Ruhm]{Department of Mathematics, Stanford University}
\email{pruhm@stanford.edu}
\author{Robin Zhang}
\address[Robin Zhang]{Department of Mathematics, Stanford University}
\email{robinz16@stanford.edu}
\date{\today}
\begin{document}

\begin{abstract}
 	Consider modular forms arising from a finite-area quotient of the
	upper-half plane by a Fuchsian group. By the classical results of
	Kodaira--Spencer, this ring of modular forms may be viewed as the
	log spin canonical ring of a stacky curve. In this paper, we
	tightly bound the degrees of minimal generators and relations of
	log spin canonical rings. As a consequence, we obtain a tight bound
	on the degrees of minimal generators and relations for rings of
	modular forms of arbitrary integral weight.
\end{abstract}

\maketitle

%%%%%%%%%%%%%%%%%%%%%%%%%%%% Introduction %%%%%%%%%%%%%%%%%%%%%%%%%%%%%%%

\section{Introduction}
Let $\Gamma$ be a {\bf Fuchsian group}, i.e.~ a discrete subgroup of
$\PSL_2(\BR)$ acting on the upper half plane $\BH$ by fractional
linear transformations, such that $\Gamma \backslash \BH$ has
finite area.
We consider the graded {\bf ring of modular forms}
$M(\Gamma) = \bigoplus_{k = 0}^\infty M_k(\Gamma)$.
One of the best ways to describe the ring $M(\Gamma)$ is to write
down a presentation.
To do so, it is useful to have a bound on the
degrees in which the generators and relations can occur. In the
special case that $\Gamma$ has no odd weight modular forms, Voight
and Zureick-Brown give tight bounds \cite[Chapters 7-9]{vzb:stacky}.
The main theorem of this work extends their result to all Fuchsian
groups $\Gamma$.

We can now consider the orbifold $\Gamma \backslash \BH$ over $\BC$.
For example, in the case $\Gamma$ acts freely on $\BH$, $\Gamma
\backslash \BH$ is a Riemann surface over $\BC$. 
Although $\Gamma \backslash \BH$ may be non-compact, we can form a
compact Riemann surface $\Gamma \backslash \BH^*$ by adding in
cusps (with associated divisor of cusps $\Delta$).

In order to find generators and relations for $M(\Gamma)$, we
translate the seemingly analytic question of understanding the ring
of modular forms into the algebraic category, using a
generalization of the GAGA principle.
As shown by Voight and Zureick-Brown \cite[Proposition 6.1.5]
{vzb:stacky}, there is an equivalence of categories between orbifold
curves and log stacky curves over $\BC$. For the remainder
of the paper, we will work in the algebraic category.

Let $X$ be a smooth proper geometrically-connected algebraic curve of genus $g$ over a field $\Bk$.
It is well known that the canonical sheaf $\Omega _X,$ with
associated canonical divisor $K_X$, determines the {\bf canonical
map } $\pi: X \rightarrow \BP_\Bk^{g - 1}$. Then, the {\bf canonical
ring} is defined to be
\begin{align*}
	R(X, K_X) := \bigoplus_{d \geq 0} H^0(X, dK_X),
\end{align*}

\noindent
with multiplication structure corresponding to tensor product of
sections. In the case that $g \geq 2$, $\Omega_X$ is ample and
therefore $X \cong \proj R$. When $g \geq 2$, Petri's theorem 
shows that, in most cases, $R(X, K_X)$
is generated in degree 1 with relations in degree 2 (see
Saint-Donat \cite[p. 157]{saint-donat:proj} and Arbarello--Cornalba--Griffiths--Harris
\cite[Section 3.3]{acgh:algebraic-curves}). This has the pleasant 
geometric consequence that canonically embedded curves of genus $\geq 4$ which are not hyperelliptic curves, trigonal curves, or plane quintics are scheme-theoretically cut out by degree 2 equations.

Following Voight and Zureick-Brown \cite{vzb:stacky}, we generalize 
Petri's theorem in the direction of stacky curves equipped with
log spin canonical divisors. For a stacky curve $\sx$ with coarse space $X$ and
stacky points (also called ``fractional points'') $P_1, \ldots, P_r$
with stabilizer orders $e_1, \ldots, e_r \in \BZ_{\geq 2}$, we define

\[
	\di \sx = \left(\bigoplus_{P\notin \{P_1, \ldots, P_r\}} \langle 
	P \rangle \right) \oplus \left(\bigoplus_{i = 1}^r \left \langle 
	\frac{1}{e_i}P_i \right \rangle \right) \subseteq \BQ \otimes \di X.
\]
Then, a {\bf log spin curve} is a triple $(\sx, \Delta, \halfcan)$
where $\Delta \in \di X$ is a log divisor and $L \in \di \sx$ is a {\bf log spin canonical divisor}, meaning $2\halfcan \sim K_X +
\Delta + \sum_{i = 1}^{r} \frac{e_i-1}{e_i} P_i$. The central object of
study in this paper is the {\bf log spin canonical ring} of $(\sx,
\Delta, \halfcan),$ defined as
\begin{align*}
	R(\sx, \Delta, \halfcan) := \bigoplus_{k \geq 0} H^0(X, \lfloor k \halfcan \rfloor).
\end{align*}

\noindent
A brief overview of stacky curves, log divisors, and log spin
canonical rings is given in Subsection
~\ref{ssec:stacky-background}.

Our main theorem is to bound the degrees of generators and
relations of a log spin canonical ring. Let $\sx$ be a stacky curve
with signature $\sigma := (g; e_1, \ldots, e_r; \delta)$. The
application of O'Dorney's work \cite[Chapter 5]{dorney:canonical}
to log spin canonical rings gives a weak bound in the case $g = 0$
in terms of the least common multiples of the $e_i$'s. In their
treatment of log spin canonical rings, Voight and Zureick-Brown
\cite[Corollary 10.4.6]{vzb:stacky} bounded generator degrees by $6
\cdot \max(e_1, \ldots, e_r)$ and relation degrees by $12 \cdot \max
 (e_1, \ldots, e_ r)$ when $\halfcan$ is effective. Note that the
bounds we deduce differ from those stated in Voight and
Zureick-Brown \cite[Corollary 10.4.6]{vzb:stacky} by a factor of 2
because their grading convention differs from ours by a factor of 2.

These bounds are far from tight and do not collectively
cover all cases in all genera. The main theorem of this paper
gives significantly tighter bounds for the log spin canonical ring
of any log spin curve.

\begin{thm}
\label{thm:main}
Let $(\sx, \Delta, \halfcan)$ be a log spin curve over a perfect
field $\Bk$, so that $\sx$ has signature $\sigma = (g; e_1, \ldots,
e_r; \delta)$.

Then the log spin canonical ring is generated as a $\Bk$-algebra by 
elements of degree at most $e := \max(5, e_1, \ldots, e_r)$ with
relations generated in degrees at most $2e$,
so long as $\sigma$ does not lie in a finite list of exceptional
cases, as given in Table ~\ref{table:g-1-exceptional} for
signatures with $g = 1$ and Table ~\ref{table:g-0-exceptional} for
signatures with $g = 0$.
\end{thm}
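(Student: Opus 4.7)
The plan is to adapt the inductive framework of Voight and Zureick-Brown \cite{vzb:stacky} to the spin setting, handling the half-integer structure of $L$ with care. First I would compute the Hilbert series of $R(\sx, \Delta, L)$ from a stacky Riemann--Roch formula, tracking how the floor $\lfloor kL \rfloor$ varies with $k$ modulo $2e$ at each stacky point $P_i$ of order $e_i$. The key new feature, relative to the pure canonical case, is that odd-degree graded pieces carry independent sections, so the bookkeeping must be done in full and not only for even $k$. This Hilbert-series computation both supplies the dimensions against which any candidate presentation is tested and flags the signatures for which the degrees of generators and relations cannot fit in the asserted range; these signatures form the exceptional lists in Tables~\ref{table:g-1-exceptional} and~\ref{table:g-0-exceptional}.

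The heart of the argument is an induction on the signature $\sigma$, realized by a ``remove-or-demote a stacky point'' comparison. Fixing a stacky point $P_i$, I would relate $R(\sx, \Delta, L)$ to the spin canonical ring of a simpler log spin curve $(\sx', \Delta', L')$, obtained by deleting $P_i$ or decreasing $e_i$, via a short exact sequence of sheaves whose cokernel is supported on $P_i$. The inductive hypothesis controls $R(\sx', \Delta', L')$, and the cokernel contributes only finitely many new generators, each of degree at most $e_i \leq e$. The corresponding new relations lie in degrees at most $2e$ provided the multiplication maps $H^0(\lfloor kL \rfloor) \otimes H^0(\lfloor \ell L \rfloor) \to H^0(\lfloor (k+\ell)L \rfloor)$ are surjective in the relevant range, which I expect to establish via a basepoint-free pencil trick adapted to stacky curves, once $\deg L$ is sufficiently positive. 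The main obstacle here is controlling surjectivity of these multiplication maps in low-degree cases, where $L$ may fail to have enough global sections; handling this will require separating the induction into cases based on whether $\lfloor kL \rfloor$ is ``sufficiently positive''.

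Finally, the base cases where the induction terminates must be handled directly. Genus $g \geq 2$ without stacky points reduces to classical results on the spin canonical ring of a smooth curve, and adding a small number of stacky points can be treated by a bounded number of inductive steps. Genus $g = 1$ and $g = 0$ with small stacky data require direct computation of generators and relations, typically by exhibiting explicit presentations and checking dimensions against the Hilbert series. I expect identifying and justifying the finite exceptional lists to be the most delicate step: for each non-exceptional small-$g$, small-$r$ signature, one must verify that the bounds $e$ and $2e$ suffice (via either induction or explicit presentation), and conversely for each signature appearing in Tables~\ref{table:g-1-exceptional} or~\ref{table:g-0-exceptional}, one must construct the minimal presentation and confirm that a higher-degree generator or relation is genuinely required. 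This sharpness check is essentially finite but delicate, analogous to the trigonal, hyperelliptic, and plane-quintic exceptions in classical Petri theory.
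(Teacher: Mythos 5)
Your global architecture matches the paper's: induct on the stacky data starting from the coarse space, use stacky Riemann--Roch/Hilbert series to test candidate presentations, handle $g\geq 2$ via classical results on spin canonical rings, and compute $g=0,1$ base and exceptional cases explicitly. The gap is in the mechanism you propose for the inductive step. Surjectivity of the multiplication maps $H^0(\lfloor kL\rfloor)\otimes H^0(\lfloor \ell L\rfloor)\to H^0(\lfloor (k+\ell)L\rfloor)$ controls only the degrees of \emph{generators}; it does not by itself bound the degrees of \emph{relations}, which requires analyzing the kernel of the presentation $\Bk[x_1,\ldots,x_n]\twoheadrightarrow R_L$. Moreover, the tool you propose for this, a basepoint-free pencil trick, requires a two-dimensional basepoint-free subspace of $H^0(\lfloor mL\rfloor)$ for small $m$, and this simply does not exist in the regime where the induction actually operates: for a typical stacky $L$ (e.g.\ signature $(0;3,7,7;0)$, where $h^0(\lfloor kL\rfloor)=1,0,0,1,0,1,1,2,\ldots$) the graded pieces are $0$- or $1$-dimensional until degree comparable to $e$, so no pencil is available. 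The paper replaces this with an explicit pole-order filtration: one writes down a monomial $\Bk$-basis $S$ of $R_L$ over $R_{L'}$ (via primitives of a cone, i.e.\ Hirzebruch--Jung continued fractions, or a determinant-one lattice computation), chooses a block/graded-$P$-lex monomial order, and invokes a general criterion (the paper's Lemma 4.1) that converts the basis $S$ plus a complementary set $T$ of monomials into a description of the initial ideal of relations. Some such Gr\"obner-theoretic (or equivalent) device is unavoidable for the $2e$ bound, and your proposal does not supply one.

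A second, related omission: when several stacky points are present, raising $e_i\mapsto e_i+2$ at one point produces new generators whose pole orders at the \emph{other} stacky points must be controlled, or else the products of new generators with old ones fail to lie back in $R_{L'}$ and the relation degrees blow up. The paper encodes this in an admissibility condition (conditions (Ad-i)--(Ad-iii)) on the presentation, which must be verified for the base cases and shown to propagate under the induction. Your ``short exact sequence with cokernel supported at $P_i$'' treats each stacky point in isolation and does not see this interaction. Finally, a small but necessary point: since $L\in\di\sx$ forces each $e_i$ to be odd (the coefficient $\frac{e_i-1}{2e_i}$ must have denominator dividing $e_i$), the demotion step must decrease $e_i$ by $2$, not by $1$; your proposal leaves the demotion unspecified.
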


\begin{rem}
\label{rem:main-weaker-assumptions}
In fact, the proof of Theorem ~\ref{thm:main} holds with $\Delta$ replaced by an arbitrary effective divisor of the coarse space.
 Furthermore, one may relax the assumption that $\Bk$ is perfect. Instead, one only need assume that the stacky curve
is separably rooted, as described further in Remark ~\ref{rem:stack-formalism}.
\end{rem}

Theorem ~\ref{thm:main} is proven separately in the cases that the genus $g = 0, g = 1,$ and $g \geq 2$ in Theorems ~\ref{thm:g-0-main}, ~\ref{thm:g-1-main}, and ~\ref{thm:g-high-main}, respectively.  In each of these proofs, we follow a similar inductive process utilizing the lemmas of Section ~\ref{sec:induction}; however, in the first two cases we explicitly construct specific base cases and present a finite list of exceptional cases, whereas in the genus $g \geq 2$ case we deduce base cases from more general arguments.

\begin{rem}
\label{rem:explicit-generators}
In addition to providing bounds on the degrees of generators and relations of log spin canonical rings, the proof
of the genus one and genus zero cases of our main theorem also yield explicit systems
of generators and initial ideals of relations, as described in Remarks ~\ref{rem:genus-1-explicit-bound} and ~\ref{rem:genus-0-explicit-bound}.
Furthermore, our proof of the genus $g \geq 2$ case provides
an inductive procedure for explicitly determining the generators
and initial ideal of relations of a log spin canonical ring given a presentation of the corresponding ring on the coarse space, but actually computing such a presentation of log spin canonical on the coarse space can be difficult.  Many
explicit systems of generators and relations for curves of genus $2
\leq g \leq 15$ are detailed in interesting examples by Neves
\cite[Section III.4]{neves:halfcan}. 
\end{rem}

\begin{rem}
The explicit construction described in Remark ~\ref{rem:explicit-generators} also reveals that the bounds given in Theorem ~\ref{thm:main} are tight.
In almost all cases, the log spin canonical ring requires a generator in degree $e = \max(5, e_1, \ldots, e_r)$ and a relation in degree at least $2e-4$. 
Furthermore, there are many infinite families of cases which require a generator in degree $e = \max(5, e_1, \ldots, e_r)$ and a relation in degree exactly $2e$. For further detail, see Remarks ~\ref{rem:genus-0-explicit-bound}, ~\ref{rem:genus-1-explicit-bound}, and ~\ref{rem:genus-2-explicit-bound} in the cases that the genus is $0,1,$ or $\geq 2$ respectively.
\end{rem}

Combining the main theorem of this paper, Theorem ~\ref{thm:main} with the main theorem from Voight and Zureick-Brown ~\cite
[Theorem 1.4]{vzb:stacky} and a minor Lemma \cite[Lemma 10.2.1]{vzb:stacky}
we have the following application to
rings of modular forms.

\begin{cor}
\label{cor:main-mod-forms}
Let $\Gamma$ be a Fuchsian group and $\sx$ the stacky curve
associated to $\Gamma \backslash \BH$ with signature $\sigma
= (g; e_1, \ldots, e_r; \delta)$. 

If $M_k(\Gamma) = 0$ for all odd $k$, then the ring of modular
forms $M(\Gamma)$ is generated as a $\BC$-algebra by elements of
degree at most $6 \cdot \max(3, e_1, \ldots, e_r)$ with relations
generated in degrees at most $12 \cdot \max(3, e_1, \ldots, e_r)$.

If there is some odd $k$ for which $M_k(\Gamma) \neq 0$, then the
ring of modular forms $M(\Gamma)$ is generated as a $\BC$-algebra
by elements of degree at most $\max(5, e_1, \ldots, e_r)$ with
relations generated in degree at most $2 \cdot \max(5, e_1, \ldots, 
e_r)$ so long as $\sigma$ does not lie in a finite list of
exceptional cases which are listed and described in Table
~\ref{table:g-1-exceptional} for signatures with $g = 1$ and in
Table ~\ref{table:g-0-exceptional} for signatures with $g = 0$.
\end{cor}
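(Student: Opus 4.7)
The plan is to reduce Corollary~\ref{cor:main-mod-forms} to Theorem~\ref{thm:main} of this paper in the odd-weight case and to \cite[Theorem~1.4]{vzb:stacky} in the even-weight-only case, with both reductions passing through the GAGA-type dictionary between modular forms and sections on the associated log stacky curve. My first step is to invoke \cite[Lemma~10.2.1]{vzb:stacky}, which gives a canonical graded isomorphism between the ring of modular forms $M(\Gamma)$ and the log spin canonical ring $R(\sx,\Delta,\halfcan)$ of the stacky curve $\sx$ attached to $\Gamma \backslash \BH^*$, with cusp divisor $\Delta$ and appropriate log spin canonical divisor $\halfcan$; under this isomorphism a modular form of weight $k$ lies in degree $k$ of the log spin canonical ring.

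Next, I split into the two cases stated in the corollary. If some odd $k$ has $M_k(\Gamma)\neq 0$, then $\halfcan$ genuinely lives in $\di\sx$ rather than in $\BQ\otimes\di X$ being forced to even multiples, so the full log spin canonical structure is used and Theorem~\ref{thm:main} applies directly over $\Bk=\BC$. This immediately yields generators in degree at most $\max(5,e_1,\ldots,e_r)$ and relations in degree at most $2\max(5,e_1,\ldots,e_r)$, with exactly the exceptional signatures recorded in Tables~\ref{table:g-1-exceptional} and \ref{table:g-0-exceptional}.

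If instead $M_k(\Gamma)=0$ for every odd $k$, then only the even-weight subring $\bigoplus_k M_{2k}(\Gamma)$ is nonzero, and the graded isomorphism of Step~1 identifies this with the ordinary log canonical ring of $(\sx,\Delta)$ in Voight--Zureick-Brown's grading convention, which differs from ours by a factor of two. I would then apply \cite[Theorem~1.4]{vzb:stacky} to obtain generators in their degree at most $3\max(3,e_1,\ldots,e_r)$ and relations in their degree at most $6\max(3,e_1,\ldots,e_r)$; rescaling by the factor of two that relates their grading to the modular-form weight grading gives exactly the bounds $6\max(3,e_1,\ldots,e_r)$ and $12\max(3,e_1,\ldots,e_r)$ claimed in the first part of the corollary.

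The main obstacle is entirely bookkeeping rather than mathematical: one must carefully track the grading conventions so that the degree bounds coming from \cite[Theorem~1.4]{vzb:stacky} and from Theorem~\ref{thm:main} both translate to the correct modular-form weights, and one must verify that the finite list of exceptional signatures in our Theorem~\ref{thm:main} is precisely the list that must be excluded in the odd-weight case of the corollary. Since Lemma~\cite[10.2.1]{vzb:stacky} already handles the identification of $M(\Gamma)$ with a log spin canonical ring (including the correct choice of $\halfcan$ and $\Delta$ from $\Gamma$), no additional analytic input is required beyond this compatibility check.
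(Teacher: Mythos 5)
Your proposal is correct and follows essentially the same route as the paper, which proves the corollary precisely by combining the Kodaira--Spencer-type identification of $M(\Gamma)$ with a log (spin) canonical ring, \cite[Theorem 1.4]{vzb:stacky} with the factor-of-two grading conversion in the even-weight case, and Theorem~\ref{thm:main} with its exceptional tables in the odd-weight case.
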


\begin{rem}
\label{rem:gen-at-most-three}
If $M(\Gamma)$ has some odd weight modular form, then it has an odd weight modular form in weight $3$. When $g \geq 2$, we see that this is true because $\dim_\Bk H^0(\sx, 3\halfcan) > 0$ by Riemann--Roch and the fact that $\deg \lfloor 3\halfcan \rfloor > 2g - 1$. When the genus is zero or one, we see that there is a generator in weight $1$ or weight $3$ in the base cases given in Table ~\ref{table:g-1-base} and Table ~\ref{table:g-0-base-cases}. Hence, there is an odd weight modular form in weight $3$ in general. A consequence of this observation is that the bound on the degree of generators and relations when $M(\Gamma)$ has some odd weight modular form, as given in Corollary ~\ref{cor:main-mod-forms}, is closely related to the degree of the minimal odd weight modular form.
\end{rem}

\begin{example}
\label{eg:congruence-bounds}
In this example, we deduce bounds on the weight of generators and relations of the ring of modular forms associated to any congruence subgroup $\Gamma \subset SL_2(\BZ)$. Since
the action  of $SL_2(\BZ)$ on $\BH$ only has points with stabilizer
order 1, 2 and 3, and has at least one cusp, the
action of $\Gamma$ on $\BH$ can only have points with stabilizer order 1, 2 and 3, and has at least one cusp.

If $\Gamma$ has no nonzero odd weight modular forms, then $\Gamma$ is generated in weight at most $6$ with relations in weight at most $12$. This follows from work by Voight and Zureick-Brown \cite[Theorem 1.4 and Theorem 9.3.1]{vzb:stacky}. Note that the exceptional cases of their result \cite[Theorem 9.3.1]{vzb:stacky}, which happen when the genus is zero, do not occur because $\delta > 0$.

If $\Gamma$ has some nonzero odd weight modular form, then it must 
have no points with stabilizer order 2 by Remark ~\ref{rem:odd-denom}. 
Therefore, by Corollary ~\ref{cor:main-mod-forms}, $M(\Gamma)$ is 
generated in weight at most $5$ with relations in weight at most $10$. Furthermore, it is not difficult to show that $M(\Gamma)$ is generated in weight at most 4 with relations in weight at most 8 when the genus of the stacky curve associated to $\Gamma \backslash \BH^*$ is 0 or 1, as noted in Remark ~\ref{rem:cusp-generation-4}. Note that the exceptional cases in Tables ~\ref{table:g-1-exceptional} and ~\ref{table:g-0-exceptional} do not occur because $\Gamma$ has a cusp, so $\delta > 0$.
\end{example}

\begin{rem}
\label{rem:factor-two}
In the case that $M_k(\Gamma) = 0$ for all odd $k$, the generation bound of $6 \cdot \max(3, e_1, \ldots, e_r)$ and relation
bound of $12 \cdot \max(3, e_1, \ldots, e_r)$ can be reduced to $2\cdot \max(3,e_1, \ldots, e_r)$ and $4 \cdot \max(3,e_1, \ldots, e_r)$, apart from several small families of cases. See 
~\cite[Theorem 9.3.1]{vzb:stacky} and ~\cite[Theorem 8.7.1]{vzb:stacky} for a more precise statement of these bounds in the cases that $g = 0$ and $g > 0$ respectively. Note that we multiply all bounds given in Voight and Zureick-Brown \cite{vzb:stacky} by a factor of two. Our grading convention for log spin canonical rings uses weight $k$ for the degree whereas Voight and Zureick-Brown $d = 2k$ for degree.
\end{rem}

The remainder of the paper will be primarily devoted to proving
Theorem ~\ref{thm:main}. The idea of the
proof will be to induct first on the number of stacky points and then on the stabilizer order of those points.  To this end, we first review important background in
Section ~\ref{sec:background}; providing essential examples in Section \ref{sec:examples}; develop various inductive
tools in Section ~\ref{sec:induction}; and prove Theorem ~\ref{thm:main} in genus $g\ge 2$, genus $g=1$, and genus $g=0$ in Sections ~\ref{sec:g-high},
~\ref{sec:g-1}, and ~\ref{sec:g-0} respectively.  
Finally, in Section
~\ref{sec:further-questions}, we pose several questions for future
research.

%%%%%%%%%%%%%%%%%%%%%%%%%%%% Background %%%%%%%%%%%%%%%%%%%%%%%%%%%%%%%

\section{Background}
\label{sec:background}
Here we collect various definitions and notation that will be
used throughout the paper. For basic references on the statements and definitions
used below, see Hartshorne \cite[Chapter IV]{hartshorne:ag},
Saint-Donat \cite{saint-donat:proj}, Arbarello--Cornalba--Griffiths--Harris 
\cite[Section III.2]
{acgh:algebraic-curves}, and Voight--Zureick-Brown \cite[Chapter 2,
Chapter 5]{vzb:stacky}.

For the remainder of this paper, fix an algebraically closed field $\Bk$.
This is no restriction on generality, as generator and relation degrees are preserved under base change to the algebraic closure.

\ssec{Stacky Curves and Log Spin Canonical Rings}
\label{ssec:stacky-background}
We begin by setting up the notation for stacky curves and canonical rings. Wherever possible, we opt for a more elementary scheme-theoretic approach, instead of a stack-theoretic one. See Remark ~\ref{rem:stack-formalism} for more details.

\begin{defn}
\label{defn:stacky-curve}
A \textbf{stacky curve} $\sx$ over an algebraically closed field $\Bk$ is the datum of a smooth proper integral scheme $X$ of dimension $1$, together with a finite number of closed points of $X$, $P_1, \ldots, P_r$, called {\bf stacky points}, with {\bf stabilizer orders} $e_1, \ldots, e_r \in \BZ_{\geq 2}.$ The scheme $X$ associated to a stacky curve $\sx$ is called the {\bf coarse space} of $\sx$.
\end{defn}

\begin{rem}
\label{rem:stack-formalism}
Stacky curves may be formally defined in the language of stacks, as
is done in the works of Voight and Zureick-Brown \cite{vzb:stacky},
Abramovich and Vistoli \cite{abramovich-vistoli:compactifying}, and
Behrend and Noohi \cite{behrend-noohi:uniformization}.

The results of this paper can be easily phrased in terms of the language of stacks. If one works over an arbitrary field $\Bk$ (which need not be algebraically closed) one can extend Theorem ~\ref{thm:main} to hold in the case that the stacky curve $\sx$ is tame and separably rooted, i.e.~ the residue field of each
of the stacky points is separable.

With this stack-theoretic description in mind, the remainder of this paper is primarily phrased using the language
of schemes.
\end{rem}

\begin{defn}
\label{defn:div-ex}
Let $\sx$ be a stacky curve over $\Bk$ with coarse space $X$ of genus $g$ and stacky points $P_1, \ldots,
P_r$ with stabilizer orders $e_1, \ldots, e_r \in \BZ_{\geq 2}$.
Then, we notate
\[
	\di \sx := \left(\bigoplus_{P\notin \{P_1, \ldots, P_r\}} \langle 
	P \rangle \right) \oplus \left(\bigoplus_{i = 1}^r \left \langle 
	\frac{1}{e_i}P_i \right \rangle \right) \subseteq \BQ \otimes \di X.
\]
\end{defn}

We can equip stacky curves with a log divisor $\Delta$ that is a 
sum of distinct points each with trivial stabilizer. 
A divisor
$\Delta$ of this form is called a \textbf{log divisor}. We use
$\delta := \deg \Delta$ to refer to the degree of the log divisor.
If $\sx$ has coarse space $X$ of genus $g$, then we say $\sx$ has
\textbf{signature} $\sigma = (g; e_1, \ldots, e_r; \delta)$.

\begin{defn}
\label{defn:divisor-floor}
If divisor $D \in \di \sx$ and $D = \sum_{i = 1}^{n} \alpha_i P_i$
with $\alpha_i \in \BQ$, the floor of a divisor $\lfloor D
\rfloor$ is defined to be $\lfloor D \rfloor := \sum_{i = 1}^{n}
\lfloor \alpha_i \rfloor P_i$.
\end{defn}

A pair of a stacky curve and a log divisor $(\sx, \Delta)$ is
called a \textbf{log stacky curve} and the study of their
canonical rings is the main focus of the work by Voight and 
Zureick-Brown \cite{vzb:stacky}. For this paper, we consider \textbf{log
spin curves} which are triples $(\sx, \Delta, \halfcan)$ where $\sx$
is a stacky curve, $\Delta$ is a log divisor, and $\halfcan \in \di
\sx$ satisfies $2 \halfcan \sim K_X + \Delta + \sum_{i = 1}^{r}
\frac{e_i - 1}{e_i} P_i$. Such a divisor $\halfcan$ is called a
\textbf{log spin canonical divisor} on $(\sx, \Delta)$. Throughout the paper, we use
the notation $\halfcan_X := \lfloor L \rfloor.$ to refer to the log spin
canonical divisor (also known as the half-canonical divisor, 
semi-canonical divisor, or theta characteristic) associated to the
coarse space $X$ of $\sx.$  (i.e.~ $\halfcan_X$ is a divisor such
that $2\halfcan_X \sim K_X + \Delta$). 
We define $H^0$ of a stacky divisor as follows.

\begin{defn}
\label{defn:h0-stacky}
Recall the standard notation for the line bundle $\sco(D)$ on an integral normal scheme $X$ associated to a divisor $D \in \di X$:
\[
	\Gamma(U, \sco(D)) := \{f \in \Bk(X)^\times : \di|_U f + D|_U \geq 0 \} \cup \{0\}.
\]

\noindent
Let $\sx$ be a stacky curve with coarse space $X$.
If $D \in \di \sx$ is a Weil divisor, then we define
\begin{align*}
	H^0(\sx, D) &:= H^0(X, \lfloor D \rfloor)\\
	H^0(\sx, \sco(D)) &:= H^0(\sx, D)\\
	h^0(\sx, \sco(D)) &:= \dim_\Bk H^0(\sx, \sco(D))
\end{align*}
If $R$ is a graded ring, then we let $(R)_k$ refer to the $k^\text{th}$
graded component of $R$.
\end{defn}

\begin{rem}
\label{rem:gorenstein}
	The log canonical ring, defined to be the direct sum of the even
	graded pieces of the log spin canonical ring, is Gorenstein. It is
	Cohen--Macaulay from \cite[Example 2.5(a)]
	{watanabe:demazure-normal-graded} and then Gorenstein by
	\cite[Corollary 2.9]{watanabe:demazure-normal-graded}. In
	particular, this tells us that a log spin canonical curve, the
	projectivization of a log spin canonical ring, is projectively
	Gorenstein.
\end{rem}

\begin{rem}
Although Definition ~\ref{defn:h0-stacky} may seem fairly ad hoc, it is naturally motivated in the context of stacks. See Voight and Zureick-Brown \cite[Lemma 5.4.7]{vzb:stacky} for a proof that Definition ~\ref{defn:h0-stacky} is equivalent to the stack-theoretic description.
\end{rem}

\begin{defn}
\label{defn:order-sup}
Let $D \in \di \sx.$ If $z \neq 0$ is a rational section of $\sco(D)$ denote the order of
zero of $z$ at $P$ by $\ord_P^D(z)$.
\end{defn}

\begin{defn}
\label{defn:log-spin-canonical-ring}
The {\bf log spin canonical ring} of $(\sx, \Delta, \halfcan)$ is
\[
	R(\sx, \Delta, \halfcan) := \bigoplus_{k \geq 0} H^0(\sx, k \halfcan).
\]
\end{defn}

\noindent
When the log spin curve is fixed, we usually use $R$ or $R_\halfcan$ to
represent $R(\sx, \Delta, \halfcan)$.

\begin{rem}
\label{rem:odd-denom}
Suppose $(\sx, \Delta, L)$ is a log spin curve. Note that $\halfcan$ is of the form
\[
	\halfcan = \sum_{i = 1}^{r} \subhalf{e_i} P_i + \sum_{i = 1}^{s} a_i Q_i
\]

\noindent
where $a_i \in \BZ$ and $e_i$ are odd. This is due to the fact
that $\halfcan \in \di \sx$: 
if some $e_i$ were even,
then $\frac{e_i - 1}{2e_i}$ would be in reduced form implying $\halfcan \notin \di \sx$.
\end{rem}

\begin{rem}
	Except in degenerate cases,
	such as when the signature is $(0;3,3,3;0)$
	as covered in the first line of Table ~\ref{table:g-0-exceptional},
	we have the following important restriction on the generators
	of $R_L$. For each $e_i$ in the signature of $\sx$, there
	will be at least one generator
	with degree $0 \bmod e_i$ and at least one generator
	with degree $-2 \bmod e_i$.
	Although this is an important restriction on the generators,
	we will not use this in the remainder of the paper.
\end{rem}

\begin{rem}
\label{rem:delta-not-1}
Suppose $(\sx, \Delta, \halfcan)$ is a log spin curve. Note that $\deg \Delta$
is even. because $2 \cdot \deg \halfcan = \deg \Delta + \deg \lfloor K
\rfloor = \deg \Delta + 2(g-1).$
In particular, we shall often use $\deg \Delta \neq 1$.
\end{rem}

\ssec{Saturation}
We define the notion of the saturation of a divisor, as can be
found in Voight and Zureick-Brown ~\cite[Section 7.2]{vzb:stacky}.
The classification of the saturations of log spin canonical
divisors are used in the proof of the main theorem and the various
lemmas in Section ~\ref{sec:induction}.

\begin{defn}
Let $D$ be a divisor on $\sx$. The \textbf{effective monoid} of $D$
is the monoid
\[
	\Eff(D) := \{k \in \BZ_{\geq 0} : \deg \lfloor kD \rfloor \geq 0 \}.
\]
\end{defn}

\begin{defn}
\label{defn:sat}
The \textbf{saturation of a monoid} $M \subseteq \BZ_{\geq 0}$,
denoted $\sat(M)$, is the smallest integer $s$ such that $M
\supseteq \BZ_{\geq s}$, if such an integer exists.
\end{defn}

\begin{rem}
For $D \in \di \sx$, we will often call $\sat(\Eff(D))$ the
\textbf{saturation of a divisor} $D$. For examples,
see Subsection ~\ref{ssec:g-0-saturation}.
\end{rem}

\ssec{Monomial Ordering}
\label{ssec:monomial-order}
Here we give a brief overview of the three monomial orderings
that we use. For further reference on monomial orderings,
initial ideals, and Gr\"{o}bner bases, see Eisenbud
\cite[Section 15.9]{eisenbud:comm-alg} and Cox--Little--O'Shea
\cite[Chapter 2]{cls:ideals-varieties-algorithms}.

\begin{defn}
\label{defn:monomial-degree}
Let $\Bk[x_1$, $\ldots$, $x_n]$ be a graded polynomial
ring with $\deg x_i = k_i$ and let $\alpha := \prod_{i = 1}^{n} x_{i}^{f_i} \in
\Bk[x_{1}$, $\ldots$, $x_{n}]$ be a monomial. Then
we define the {\bf degree} of $\alpha$ to be
\[
	\deg \alpha := \sum_{i = 1}^{n} k_i f_i.
\]
\end{defn}

\begin{defn}
\label{defn:grevlex}
The {\bf graded reverse lexicographic order}, or {\bf grevlex} $\prec_{\text{grevlex}}$ is defined as follows.
If $\alpha := \prod_{i = 1}^{n} x_{i}
^{f_i}$ and $\beta := \prod_{i = 1}^{n} x_{i}^{f_i'}$ are
monomials in $\Bk[x_{1}$, $\ldots$, $x_{n}]$, then $\alpha
\succ_{\text{grevlex}} \beta$ if either

\begin{equation}
	\deg \alpha = \sum_{i = 1}^{n} k_i f_i  > \sum_{i = 1}^{n} k_i f_i' = \deg \beta
\end{equation}
or
\begin{equation}
\label{eqn:grevlex-tie}
	\deg \alpha = \deg \beta \text{ and }	f_i < f_i' \text{ for
	the	largest } i \text{ such that } f_i \neq f_i'.
\end{equation}
\end{defn}

\begin{rem}
Note that the ordering of the variables matters in Equation
~\ref{eqn:grevlex-tie}.
\end{rem}

Our inductive arguments in Section ~\ref{sec:induction} will
usually have an inclusion $R \supseteq R'$ of log spin canonical
rings such that $R_\halfcan$ is generated by elements $x_{i}$ and $R$
is generated over $R_{\halfcan'}$ by elements $y_j$. In these cases, it is
natural to consider term orders which treat these sets of variables
separately.

\begin{defn}
\label{defn:block-order}
The \textbf{block term order} is defined as follows. Let $\Bk[y_{1}, \ldots, y_{m}]$ and $\Bk[x_{1}, \ldots,
x_{n}]$ be weighted polynomial rings with $\deg y_i = c_i, \: \deg x_i = k_i.$ Further assume we are given existing term
orders $\prec_y$ and $\prec_x$. Let $\alpha :=
\prod_{j = 1}^{m} y_{j}^{h_i}$ $\prod_{i = 1}^{n} x_{i}
^{f_i}$ and $\beta := \prod_{j = 1}^{m} y_{j}^{h_i'}
\prod_{i = 1}^{n} x_{i}^{f_i'}$ be monomials in $\Bk[y_{1},
\ldots, y_{m}] \otimes$ $\Bk[x_{1}, \ldots, x_{n}]$.
Let $\alpha_y := \prod_{j = 1}^{m} y_{j}^{h_i}$ be the
part of $\alpha$ in $\Bk[y_{1}, \ldots, y_{m}]$ and
likewise with $\alpha_x$, $\beta_y$, and $\beta_x$.

In the \textbf{(graded) block} (or \textbf{elimination}) term
ordering on $\Bk[y_{k_1', 1}, \ldots, y_{k_m', m}] \otimes \Bk[x_{k_1, 1},
\ldots, x_{k_n, n}]$, we define $\alpha \succ \beta$ if
  \begin{enumerate}
	\item[(i)] $\deg \alpha  > \deg \beta$ or
  \item[(ii)] $\deg \alpha = \deg \beta$ and $\alpha_y \succ_y \beta_y$ or
	\item[(iii)] $\deg \alpha = \deg \beta$ and $\alpha_y = \beta_y$ and $\alpha_x \succ_x \beta_x$.
  \end{enumerate}
\end{defn}

Now we give brief definitions of initial terms and Gr\"{o}bner
bases. These will be used in the proofs of the inductive lemmas
in Section ~\ref{sec:induction} as well as in the proof of Theorem
~\ref{thm:main}.

\begin{defn}
\label{defn:initial-term}
Let $\prec$ be an ordering on $\Bk[x_{1}$, $\ldots$, $x_{n}]$, 
with $\deg x_i = k_i$,
and let $f \in \Bk[x_{1}, \ldots, x_{n}]$ be a homogeneous
polynomial. The \textbf{initial term} $\initial(f)$ of $f$
is the largest monomial in the support of $f$ with respect to
the ordering $\prec$. Furthermore, we set $\initial(0) := 0$.
\end{defn}

\begin{defn}
\label{defn:initial-ideal}
Let $I$ be a homogeneous ideal of $\Bk[x_{1}$, $\ldots$,
$x_{n}]$. Then the {\bf initial ideal} $\initial(I)$ of $I$ is
the ideal generated by the initial terms of homogeneous polynomials
in $I$:
\[
	\initial(I) := \langle \initial(f) \rangle_{f \in I}
\]
\end{defn}

\begin{defn}
\label{defn:grobner-basis}
Let $I$ be a homogeneous ideal of $\Bk[x_{1}$, $\ldots$,
$x_{n}]$. A \textbf{Gr\"obner basis} for $I$, also known as a
\textbf{standard basis} for $I$, is a set of elements in $I$
such that their initial terms generate the initial
ideal of $I$.
\end{defn}

%%%%%%%%%%%%%%%%%%%%%%%%%%% Examples %%%%%%%%%%%%%%%%%%%%%%%%%%%%%%%

\section{Examples}
\label{sec:examples}

In this section, we work out several examples of computing presentations for spin canonical rings. In addition to providing intuition for the lemmas of Section ~\ref{sec:induction}, these examples also serve as useful base cases for our inductive proof of Theorem ~\ref{thm:main}.

\begin{example}
\label{eg:base-1-0}
Let $(\sx',0, \halfcan')$ be a log spin curve of genus $g = 1,$ with $\halfcan' = 0$.
Counting dimensions, we see $h^0(\sx, k\halfcan')=1$ for all $k \in \mathbb{Z}_{\ge 0}$ so it is immediately clear that $R_{\halfcan'} \cong \Bk[x]$ with $x$ a generator in degree 1.
\end{example}

\begin{rem}
	\label{remark:hilbert-series}
	In the following examples, in order to find the Hilbert series of
	a stacky curve, we will cite
	\cite[Theorem 4.2.1]{zhou:orbifold-riemann-roch-and-hilbert-series}.
	Note that
	\cite[Theorem 3.1]{buckleyRZ:ice-cream-orbifold-and-riemann-roch}
	restates \cite[Theorem 4.2.1]{zhou:orbifold-riemann-roch-and-hilbert-series}
	with the restriction that the dimension of the orbifold (which is the same
	as a stacky curve in dimension 1, by \cite[Proposition 6.1.5]{vzb:stacky},)	
	is strictly more than 1.
	The statement holds equally well when the dimension is $1$,
	but this restriction is included in
	\cite[Theorem 3.1]{buckleyRZ:ice-cream-orbifold-and-riemann-roch}
	because in birational geometry ``orbifolds''
	usually refer to a normal variety ramified only in codimension at least 2,
	while the stacky points we are dealing with appear in codimension 1.
\end{rem}

\begin{example}
\label{eg:adding-point}
Let $(\sx,3 \cdot \infty, L)$ be a stacky curve with coarse space $X$ and signature 
$(0; 3; 3)$. Let $P_1$ denote the lone stacky point which has stabilizer
order $3$ and suppose $\infty$ is a fixed closed point of $X$ that is
not equal to $P_1$.

Recall the notation $\halfcan_X = \lfloor L \rfloor \in \di X$
(i.e.~ the divisor without any stacky points). We will deduce the
structure of the log spin canonical ring $R_\halfcan$ from the
structure of the spin canonical ring $R_{\halfcan_X} := R(X, 3 \cdot
 \infty, \halfcan_X)$. This technique will later be generalized in
Lemma ~\ref{lem:sat-1}.

Note that $R_{\halfcan_X} \cong \Bk[x_1, x_2]$ where $\deg x_1 = \deg x_2 = 1.
$ To see this, observe that we will need two generators in degree $1$
because $h^0(X, \halfcan_X) = 2$ by Riemann--Roch. Because $\halfcan_X$ is very 
ample, we have that $R_{\halfcan_X}$ is generated in degree 1. To
conclude, note that $R_{\halfcan_X}$ does not have any relations. If there
exists some relation, then $\dim \proj R_{\halfcan_X} < 1$. This would
contradict the fact that $\halfcan_X$ is very ample. Thus, $\proj R_{\halfcan_X}
\cong X$ which has dimension 1.

Next, we construct generators and relations for $R_\halfcan$ using those
of $R_{\halfcan_X}$. Note that we have a natural inclusion $\iota: R_{\halfcan_X}
\hookrightarrow R_\halfcan$ induced by the inclusions $H^0(\sx, k\halfcan_X)
\hookrightarrow H^0(\sx, k\halfcan)$ for each $k \geq 0$. By Riemann--Roch,
we see there is some element $y_{1, 3} \in (R_\halfcan)_3$ with $\ord_{P_1}
(y_{1, 3}) = -1$, not in the image of the inclusion $\iota$. We
claim that there exist $a_1, a_2 \in \Bk$ and a degree $4$ polynomial
$f(x_1, x_2) \in \Bk[x_1, x_2]$ such that
\begin{align*}
	R_\halfcan \cong \Bk[x_1, x_2, y_{1, 3}] / (a_1 x_1 y_{1, 3} + a_2 x_2 y_{1, 3}
	+ f(x_1,x_2))
\end{align*}

First, note that $x_1, x_2, y_{1, 3}$ generate all of $R_\halfcan$ from the
Generalized Max Noether Theorem for genus zero curves from Voight
and Zureick-Brown \cite[Lemma 3.1.1]{vzb:stacky}. That is, the maps
\begin{align*}
	H^0(\sx, 3L) \otimes H^0(\sx, (k - 3)L) \rightarrow H^0(\sx, k\halfcan)
\end{align*}

\noindent
are surjective for $k \geq 4$. A relation of the form $a_1 x_1 y_{1,
3} + a_2 x_2 y_{1, 3} + f(x_1, x_2) = 0$ must exist because $h^0(\sx,
4L) - h^0(\sx, 4\halfcan_X) = 1,$ but $x_1 y_{1, 3}$ and $x_2 y_{1, 3}$
define two linearly independent elements with nontrivial image in
the 1-dimensional vector space $H^0(\sx, 4L) / H^0(\sx, 4\halfcan_X).$ So,
we obtain a surjection
\begin{align}
\label{align:surjection}
	\Bk[x_1, x_2, y_{1, 3}] / (a_1 x_1 y_{1, 3} + a_2 x_2 y_{1, 3} + f(x_1, x_2)) 
	\rightarrow R_\halfcan.
\end{align}

To complete the example, it suffices to show there are no
additional relations. 
One method would be to use \cite[Theorem 4.2.1]{zhou:orbifold-riemann-roch-and-hilbert-series}
to write down the Hilbert series and then check this agrees with the
Hilbert series of the ring we constructed above.
Here is an alternative method:
First, note that $a_1 x_1 y_{1, 3} + a_2 x_2
y_{1, 3} + f(x_1, x_2)$ is irreducible because there are no
relations among $x_1, x_2$ and $y_{1, 3}$ in lower degrees. Hence,
$\Bk[x_1, x_2, y_{1, 3}] / (a_1 x_1 y_{1, 3} + a_2 x_2 y_{1, 3} +
f(x_1, x_2))$ is integral and is 2-dimensional. Thus, the map
\eqref{align:surjection} defines a surjection from an integral
2-dimensional ring to a 2-dimensional ring. Therefore, it is an 
isomorphism.
\end{example}

\begin{example}
\label{eg:base-0-377}
Let $(\sx', 0, \halfcan')$ be a log spin curve with signature $\sigma =
(0; 3, 7, 7; 0)$ and $\halfcan' \sim -\infty + \frac{1}{3} P_1 +
\frac{3}{7} P_2 + \frac{3}{7} P_3,$ where $P_1, P_2,$ and $P_3$ are
distinct points. In this example, we will exhibit a minimal
presentation for $R' = R(\sx', 0, \halfcan')$ and show that $R'$ is
generated as a $\Bk$-algebra in degrees up to $e := \max(5, 3, 7, 7) = 7$
with relations generated in degrees up to $2e = 14$.
Notice that $\deg \lfloor k K_{\sx'} \rfloor= -k + \lfloor \frac{k}{3}
\rfloor + 2 \lfloor \frac{3k}{7} \rfloor$.

First, by \cite[Theorem 4.2.1]{zhou:orbifold-riemann-roch-and-hilbert-series},
the Hilbert series of this log spin curve is
\begin{align*}
	P_{(\sx', 0, \halfcan')}(k) &= \frac{1-2k + k^2 -2k^3 + k^4}{(1-k)^2} + \frac{k^3}{(1 - k)(1 - k^3)} + 2 \frac{k^3 + k^5 + k^7}{(1-k)(1-k^7)}.
\end{align*}
For $k = 0, 1, 2, \ldots$ we have
\[
	P_{(\sx', 0, \halfcan')}(k) = 1, 0, 0, 1, 0, 1, 1, 2, 1, 1, 2, 1, 3, 2, 3, \ldots
\]

\noindent
so $R'$ must have some generators $x_{3, 1}, x_{5, 1}, x_{7, 1},
x_{7, 2}$ with $x_{i, j} \in H^0(\sx', i \halfcan')$.

By the Generalized Max Noether Theorem for genus 0 curves (see Voight
and Zureick-Brown \cite[Lemma 3.1.1]{vzb:stacky}),
\begin{align}
\label{eqn:noether-377}
	H^0 (\sx', 21 \cdot \halfcan') \otimes H^0 (\sx',
	(k - 21) \halfcan') \rightarrow H^0 (\sx',
	k \halfcan')
\end{align}

\noindent
is surjective whenever $\deg (\lfloor (k - 21) \halfcan' \rfloor)
\geq 0$. It is fairly easy to see, by use of Riemann--Roch, that
the saturation of $\halfcan'$ is $5$ (see Definition ~\ref{defn:sat}).
Then the map in ~\eqref{eqn:noether-377} is surjective when $k \geq
21 + s = 26$ (i.e.~ $R'$ is generated up to degree 25).

To show that these generate all of $R'$, we need to show that all
$H^0 (\sx', k \halfcan')$ are generated by lower degrees for $k = 6$
and $7 < k \leq 25$. This can be seen by
checking these remaining cases via pole degree considerations
or using the generalized Max Noether's theorem. Thus, $R'$
is generated in degrees $\{3, 5, 7, 7\}$.

By relabelling the variables if necessary, we can assume that $x_{7
, 1}$ corresponds to the generator with maximal pole order at $P_2$ 
and $x_{7, 2}$ correspond to the generator with maximal pole order 
at $P_3$.
We then have two relations
\begin{align*}
	&a_1 x_{5, 1}^2 + a_2 x_{7, 2} x_{3, 1} + a_3 x_{7, 1} x_{3, 1} = 
	0 &\text{ in degree $10$} \\
	&b_1 x_{7, 2}^2 + b_2 x_{7, 2} x_{7, 1} + b_3 x_{7, 1}^2
	+ b_4 x_{5, 1} x_{3, 1}^3 = 0  &\text{ in degree $14$.}
\end{align*}

\noindent
Note that $a_1$ and $b_1$ are both nonzero. For example, if $a_1 = 0$,
we would have
\[
	a_2 x_{7, 2} x_{3, 1} = -a_3 x_{7, 1} x_{3, 1}
\]

\noindent
implying that $a_2 = a_3 = 0$, which would mean there is no relation at all.
A similar pole order consideration in forcing $b_1$ to be nonzero.

Let $I$ be the ideal generated by these relations in
$\Bk[x_{7, 2}, x_{7, 1}, x_{5, 1}, x_{3, 1}]$. Under grevlex with
$x_{3,1} \prec x_{5,1} \prec x_{7,1} \prec x_{7,2}$, the initial
ideal of $I$ is
\[
	\initial(I) = \langle x_{7, 2}^2, x_{5, 1}^2 \rangle
\]

\noindent
since $a_1$ and $b_1$ are nonzero. Since this ring has Hilbert series
equal to $P_{(\sx', 0, \halfcan')}(k)$, we have found all the relations.

Therefore, the canonical ring $R'$ has presentation $R' =
\Bk[x_{7, 2}, x_{7, 1}, x_{5, 1}, x_{3, 1}] / I$ with
initial ideal $\initial(I)$ generated by quadratics under grevlex
with $x_{3,1} \prec x_{5,1} \prec x_{7,1} \prec x_{7,2}$.
Thus, $R'$ is generated up to degree $e = 7$ with relations up to
degree $2e = 14$, as desired.
\end{example}

\begin{example}
\label{eg:base-1-33}
Let $(\sx',0 , \halfcan')$ be a log spin curve of genus $1$ with $\halfcan' = P - Q + \frac{1}{3}P_1 + \frac{1}{3}P_2$. In this example, we show that
$$R_{\halfcan'} \cong \Bk[u, x_3, y_3, y_4]/(x_3 y_3- \alpha uy_4, y_4^2 - \beta x_3^2 u - \gamma y_3^2u).$$

Let $u \in H^0(\sx,2\halfcan')$ be any nonzero element, let $x_3 \in H^0(\sx,3\halfcan')$ be an element with a pole at $P_1$ but not at $P_2$ and $y_3 \in H^0(\sx,3\halfcan')$ be an element with a pole at $P_2$ but not at $P_1$. Let $y_4 \in H^0(\sx,4\halfcan')$ be an element with a pole of order 1 at both $P_1$ and $P_2$. Note that $x_3$ and $y_3$ exist because the linear systems $3P - 3Q \sim P - Q$, $3P - 3Q + P_1$, and $3P - 3Q + P_1 + P_2$ are 0, 1, and 2 dimensional respectively.

Then, there exist constants $\alpha, \beta, \gamma \in \Bk$ so that 
$R_{\halfcan'} \cong \Bk[u, x, y_3, y_4]/(xy_3- \alpha uy_4, y_4^2 - \beta x^2 u - \gamma y^2u).$ The proof of this is fairly algorithmic: We may first 
write down the Hilbert series of $(R_{\halfcan'})_n$ over $\Bk$ using \cite[Theorem 4.2.1]{zhou:orbifold-riemann-roch-and-hilbert-series},
 then verify that these generators and relations produce the correct number of independent functions via an analysis of zero and pole order. The details are omitted as it is analogous to Example ~\ref{eg:base-0-377}.
\end{example}

\begin{example}
\label{eg:exception-1-5}
Let $(\sx', 0, L')$ be a log spin curve of genus 1 with $L' = P -
 Q + \frac{2}{5} P_1.$ Let $x_2 \in (R_\halfcan)_2$ be any nonzero element.
We obtain $\di x_2|_{P_1} = 0,$ since $2P - 2Q \sim 0$ and by
Riemann--Roch, $\dim_\Bk (P_L)_2 = 1.$ Let $y_3 \in (R_\halfcan)_3$
be any nonzero element. We obtain $\di y_3|_{P_1} = -P_1,$ by
Riemann--Roch, since if $y_3$ did not have a pole at $R$, we would
obtain $y_3 \in H^0 (\sx,3P-3Q) \cong H^0(\sx, P - Q) \cong 0$ as
$P \neq Q$. Finally, let $y_5 \in (R_\halfcan)_5$ be an element with
$y_5|_{P_1} = -P_1$. Then, we claim there is some $\alpha \in \Bk$
so that

$$R_\halfcan \cong \Bk[x_2 , y_3, y_5]/(y_3^4 - \alpha x_2 y_5^2).$$

In order to show this is an isomorphism, one can write down the Hilbert series using \cite[Theorem 4.2.1]{zhou:orbifold-riemann-roch-and-hilbert-series}
 and then use pole order considerations at $P_1$ to check the above relation exists. One can then check that the generators and relation determine a ring with the desired Hilbert series. The verification is analogous to Example ~\ref{eg:base-0-377} and is omitted.
\end{example}

\begin{rem}
\label{rem:base-0-377}
Examples ~\ref{eg:base-0-377}, \ref{eg:base-1-33}, and
\ref{eg:exception-1-5} are used as inductive base cases in the
genus 0 and genus 1 sections (see Tables
~\ref{table:g-0-base-cases} and ~\ref{table:g-1-base}).

We will come back to Examples ~\ref{eg:base-0-377} and
\ref{eg:base-1-33} in Examples ~\ref{eg:base-0-377-adm} and
\ref{eg:base-1-33-adm}, respectively, when checking an
admissibility condition that will be defined when introducing the
lemmas used in Subsection ~\ref{ssec:raise-orders} (see Definition
~\ref{defn:admissible}).
\end{rem}

%%%%%%%%%%%%%%%%%%%%%%%%%%% Inductive lemmas %%%%%%%%%%%%%%%%%%%%%%%%%%%%%%%

\section{Inductive Lemmas}
\label{sec:induction}
First we present several lemmas which provide the inductive steps
for the proof of the main theorem (Theorem ~\ref{thm:main}). In
Subsection ~\ref{ssec:add-points} we prove three lemmas which
determine the generators and relations of $R_\halfcan = R_{\halfcan'
+ \frac{\alpha }{\beta}P}$ from those of $R_{\halfcan'}$, where
$\halfcan' \in \BQ \otimes \di X$ and $\frac{\alpha}{\beta} \in \BQ$. 
In Subsection ~\ref{ssec:raise-orders}, we prove an inductive lemma
allowing us to transfer information about the log spin canonical 
ring of a stacky curve to those of stacky curves with stabilizer
orders incremented by $2$ and fixed log divisor and stacky points.

\ssec{Adding Points}
\label{ssec:add-points}
First, we give a criterion to determine if a set of monomials generates the initial ideal of relations of $\Bk[x_1, \ldots, x_m] \to R_D$.  This criterion will be used repeatedly to show that a given homogeneous ideal is in fact the ideal of relations.

\begin{lem}
\label{lem:relations_from_generators_induction} 
Suppose $\halfcan$, $\halfcan' \in \BQ$ $\otimes$ $\di X$ with $\halfcan=\halfcan'+\frac{\alpha}{\beta}P$, such that $R_{\halfcan'}$ generated by $x_1, \ldots, x_m$
and $R_{\halfcan}$ is minimally generated by $y_1, \ldots, y_n$ over 
$R_{\halfcan'}$.  Let $I'$ and $I$ be the ideals of relations of $\phi':\Bk[x_1, \ldots, x_m]\to R_{\halfcan'}$ and $\phi:\Bk[x_1, \ldots, x_m, y_1, \ldots y_n]\to R_{\halfcan}$ respectively.
Suppose there are sets of monomials $S\subseteq R_\halfcan-R_{\halfcan'}$ and $T\subseteq R_\halfcan-(S\cup R_\halfcan')$, and a monomial ordering $\prec$ such 
\begin{enumerate}
\item $S$ forms a $\Bk$-basis for $R_\halfcan$ over $R_\halfcan'$
\item $T \succ S\succ \Bk[x_1, \ldots, x_n]$ {\rm(}meaning all monomials in $T$ are bigger than all monomials in $S$ which are bigger than all monomials in $\Bk[x_1, \ldots, x_n]${\rm)}
\item  All monomials in $\Bk[x_1, \ldots, x_m, y_1, \ldots, y_n]$ lie in 
\begin{align*}
	S \; \cup\; \langle T\rangle \; \cup \; \initial(I') \Bk[x_1, \ldots, x_m, y_1, \ldots, y_n] \; \cup \; \Bk[x_1, \ldots, x_m]
\end{align*}
\end{enumerate}
Then,
\begin{align*}
	\initial(I) & = \initial(I') \Bk[x_1, \ldots, x_m, y_1, \ldots, y_n]
	+ \langle T \rangle.
\end{align*}
\end{lem}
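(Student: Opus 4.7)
The plan is to prove the claimed equality of monomial ideals by two-sided inclusion: the forward direction by exhibiting explicit polynomials in $I$ whose initial terms account for the right-hand side, and the reverse direction by a Hilbert-function count.

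For the inclusion $\initial(I')\Bk[x_1,\ldots,x_m,y_1,\ldots,y_n] + \langle T\rangle \subseteq \initial(I)$, I would first note that under the natural inclusion $\Bk[x_1,\ldots,x_m] \hookrightarrow \Bk[x_1,\ldots,x_m,y_1,\ldots,y_n]$ the map $\phi'$ is the restriction of $\phi$, so $I' \subseteq I$ and hence $\initial(I')\Bk[x_1,\ldots,x_m,y_1,\ldots,y_n] \subseteq \initial(I)$. For the contribution $\langle T\rangle$, fix $t \in T$ and use hypothesis (1) to expand $\phi(t) = r + \sum_{s\in S} c_s\, \phi(s)$ with $r \in R_{\halfcan'}$ and $c_s \in \Bk$; then pick a preimage $\tilde{r} \in \Bk[x_1,\ldots,x_m]$ of $r$ under $\phi'$. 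The polynomial $f := t - \sum_{s\in S} c_s\, s - \tilde{r}$ lies in $I$, and hypothesis (2) forces $\initial(f) = t$, since $t$ exceeds every $s \in S$ and every monomial of $\tilde{r} \in \Bk[x_1,\ldots,x_m]$. Thus $t \in \initial(I)$, and since $\initial(I)$ is an ideal, $\langle T\rangle \subseteq \initial(I)$.

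For the reverse inclusion, set $J := \initial(I')\Bk[x_1,\ldots,x_m,y_1,\ldots,y_n] + \langle T\rangle$. The inclusion $J \subseteq \initial(I)$ just established yields $\dim_\Bk\!\bigl(\Bk[x_1,\ldots,x_m,y_1,\ldots,y_n]/J\bigr)_d \geq \dim_\Bk\!\bigl(\Bk[x_1,\ldots,x_m,y_1,\ldots,y_n]/\initial(I)\bigr)_d = \dim_\Bk (R_\halfcan)_d$ for every $d$, using the standard fact that Hilbert functions are preserved under passage to the initial ideal. For the matching upper bound, count monomials outside $J$: by hypothesis (3), any such monomial lies in $S \cup \Bk[x_1,\ldots,x_m]$, and these two sets of monomials are disjoint because every $s \in S$ must involve some $y_j$ (as $\phi(s) \notin R_{\halfcan'}$). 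Hence the number of monomials of degree $d$ outside $J$ is at most $|S|_d + |\Bk[x_1,\ldots,x_m]_d \setminus \initial(I')_d| = |S|_d + \dim_\Bk(R_{\halfcan'})_d = \dim_\Bk(R_\halfcan)_d$, where the final equality reads hypothesis (1) as the assertion that $\phi(S)$ is a $\Bk$-basis for the quotient $R_\halfcan/R_{\halfcan'}$. Combining both inequalities forces equality of Hilbert functions in every degree, and together with $J \subseteq \initial(I)$ this gives $J = \initial(I)$.

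The substantive step is the construction of $f$ that witnesses each $t \in T$ as an initial term of an element of $I$; once this is in place, the reverse inclusion reduces to a dimension count orchestrated by hypotheses (1)--(3). The only interpretive subtlety is hypothesis (1), which I read as the assertion that $\phi(S)$ is a $\Bk$-basis for $R_\halfcan/R_{\halfcan'}$ as a $\Bk$-vector space, so that $\dim_\Bk(R_\halfcan)_d = \dim_\Bk(R_{\halfcan'})_d + |S|_d$.
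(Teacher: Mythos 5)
Your proof is correct. The forward inclusion is exactly the paper's argument: $I'\subseteq I$ handles the extended ideal $\initial(I')\Bk[x_1,\ldots,x_m,y_1,\ldots,y_n]$, and for each $t\in T$ you build the witness relation $t-\sum_{s\in S}c_s s-\tilde r\in I$ whose initial term is $t$ by hypothesis (2). Where you diverge is the reverse inclusion. The paper argues monomial-by-monomial: it uses (2) again to show that no element of $I$ can have its initial term in $S$ (a polynomial with leading monomial in $S$ has image outside $R_{\halfcan'}$ and in particular is nonzero in $R_\halfcan$), notes $\initial(I)\cap\Bk[x_1,\ldots,x_m]=\initial(I')$, and then invokes (3) to conclude that every monomial of $\initial(I)$ lands in the right-hand side. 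You instead run a Hilbert-function comparison: the containment $J\subseteq\initial(I)$ already proved, the standard equality $\dim_\Bk(\Bk[x,y]/\initial(I))_d=\dim_\Bk(R_\halfcan)_d$, and the count of monomials outside $J$ via (3) and the disjointness of $S$ from $\Bk[x_1,\ldots,x_m]$ force the two monomial ideals to agree degree by degree. Your route buys a cleaner reverse direction that never needs (2) a second time and sidesteps the paper's somewhat delicate ``no initial term lies in $S$'' step (which is in fact garbled as written in the paper); the cost is that you must be careful that the sets $S$ and $\Bk[x_1,\ldots,x_m]\setminus\initial(I')$ are disjoint and that all graded pieces are finite-dimensional, both of which you address. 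Either argument is complete.
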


\sssec*{Idea of Proof:} To show $\supseteq$, we show that $T \subseteq \initial(I)$, which follows immediately from $(1)$ and $(2)$.
We deduce $\subseteq$ by noting that we can reduce any monomial in $k[x_1, \ldots, x_m, y_1, \ldots, y_n]$ to a monomial in the basis $S$ via a set of relations whose initial terms include each monomial in $\initial(I')\Bk[x_1, \ldots, x_m, y_1, \ldots, y_n] + \langle T \rangle$.

\begin{proof}
First, notice that $I'\subseteq I$ so 
\[
	\initial(I)\supseteq \initial(I')\Bk[x_1, \ldots, x_m, y_1, \ldots, y_n].
\]

Now, let $f\in T$.  Since $S$ forms a $\Bk$-basis of $R_{\halfcan}$ over $R_{\halfcan'}$ by (1), we can write a relation $f - (\sum_{g\in S'} C_g g)-r=0$ for some finite subset $S'\subseteq S_{\deg(f)}$, $C_g\in \Bk$ for all $g\in S'$, and $r\in R_{\halfcan'}$.  This demonstrates that $\initial(I) \supseteq T$, and hence
\[
	\initial(I)\supseteq \initial(I')\Bk[x_1, \ldots, x_m, y_1, \ldots, y_n].
\]

To complete the proof, it suffices to show the reverse inclusion holds. By (2), any polynomial $G\in \Bk[x_1, \ldots, x_m, y_1, \ldots, y_n]$ with $\initial(\phi(G)) \in S$ cannot have a term in $T$.  Furthermore, since $S$ forms a $\Bk$-basis for $R_\halfcan$ over $R_{\halfcan'}$ by (1), and $\initial(\phi(G)) \in S,$ we obtain $\phi(G) \notin R_{\halfcan'} \subseteq R_\halfcan.$ Thus, $G=0$ is not a relation, so $f\not\in \initial(I)$.  Therefore, 
\[
	\initial(I)\subseteq R_\halfcan-S.
\]
In particular, there are no monomials in $I$ with initial terms in $S$.
Finally, note that
\[
	\initial(I) \cap \Bk[x_1, \ldots, x_m] = \initial(I').
\]

By (3), every monomial of $\Bk[x_1, \ldots, x_m, y_1, \ldots, y_n]$ is an element of either $S$, $\langle T\rangle$, or $\initial(I') \Bk[x_1, \ldots, x_m, y_1, \ldots, y_n].$ Therefore,
\begin{align*}
	\initial(I) & \subseteq \initial(I') \Bk[x_1, \ldots, x_m, y_1, \ldots, y_n] + \langle T \rangle.
\end{align*}
\end{proof}

To apply Lemma \ref{lem:relations_from_generators_induction}, we will need an appropriate monomial ordering.
The following definition provides the necessary ordering for the Lemma \ref{lem:sat-1}.
\begin{defn}
\label{defn:graded-p-lex}
Suppose $\halfcan$ is a divisor of $X$ such that $R_\halfcan$ is generated by $x_1, \ldots x_m$.  Then we have a map $\phi: \Bk[z_1, \ldots z_m] \rightarrow R_\halfcan, z_i\mapsto x_i$. If $P$ is a point in $X$, then $\phi$ defines a graded-$P$-lexicographic order (shortened to {\bf{graded $P$-lex}}) on $\Bk[z_1, \ldots, z_m]$ as follows.
If $f=\prod_{i=1}^m {z_i}^{q_i}$ and $g=\prod_{i=1}^m {z_i}^{r_i}$ with $f \neq g,$ then $f\prec g$ if one of the following holds:
\begin{enumerate}
\item $\deg(f) < \deg(g)$
\item $\deg(f) = \deg(g)$ and $-\ord_P(f) < -\ord_P(g)$
\item $\deg(f) = \deg(g)$, $-\ord_P(f) = -\ord_P(g)$, and $q_i > r_i$ for the largest $i$ such that $q_i\ne r_i$
\end{enumerate}
\end{defn}

\begin{rem}\label{rem:graded-P-lex-independent-of-line-bundle}
Observe that Definition \ref{defn:graded-p-lex} remains the same if we replace $-\ord_P$ with $-\ord_P^{L'}$ for any divisor $\halfcan'$ of $X$.
\end{rem}

\noindent One can easily verify graded $P$-lex is a monomial ordering in the sense defined in Cox--Little--O'Shea \cite[Chapter 2, $\mathsection$ 2, Definition 1]{cls:ideals-varieties-algorithms}.

We are almost ready to state Lemma ~\ref{lem:sat-1}, which will yield an inductive procedure for determining the generators and relations of $R_D$, where $D \in \di \BP^1$ is an effective $
\BQ$-divisor. Whereas O'Dorney considers arbitrary $\BQ$-divisors in
$Div(\mathbb{P}^1)$ \cite[Theorem 8]{dorney:canonical}, we restrict
attention to effective divisors and in Lemma ~\ref{lem:sat-1} we
obtain much tighter bounds.  Moreover, Lemma ~\ref{lem:sat-1}
also extends to curves of genus $g > 0$. 
We next prove the first of three lemmas used to inductively add points.

\begin{lem}
\label{lem:sat-1}
Let $X$ be a genus $g$ curve and let $\halfcan' \in \BQ \otimes \di X$
satisfy $h^0(X, \lfloor{\halfcan'}\rfloor)\ge 1$. Suppose $P$ is not a base-point of $k\halfcan'$ for all $k \in \BN$, meaning we can choose generators $u, x_1, \ldots, x_m$ of $R_{\halfcan'}$ in degree at most $\tau$ for some $\tau\in \BN$, with $\deg u = 1$, $\ord_P^{\halfcan'}(x_i)=0$ for all $1 \leq i \leq m$, and $\ord_P^{\halfcan'}(u) = 0$.  Suppose $\halfcan = \halfcan' + \frac{\alpha}{\beta} P$
for some $\alpha, \beta \in \BN$ such that $\frac{\alpha}{\beta}$ is reduced and
\begin{align}
\label{eqn:deg1-sat-ind}
	h^0(X, \lfloor k \halfcan \rfloor) &= h^0(X, \lfloor k \halfcan'
	\rfloor) + \left\lfloor k \frac{\alpha} {\beta} \right \rfloor &&\text{ for all } k \in \mathbb{
	N}.
\end{align}
Then,

\begin{enumerate}
\item[(a)] $R_{\halfcan}$ is generated over $R_{\halfcan'}$ by 
	elements $y_1, \ldots, y_n$ where $\deg(y_i) = k_i<\beta$, $-\ord_P
	^{L'}(y_i) = c_i$ for some $k_i$'s and $c_i$'s such that $c_i<c_{i+1}\le \alpha$ and $k_i\le k_{i+1}\le \beta$ for all $i$.

\item[(b)] Choose an ordering $\prec$ on $\Bk[u, x_1, \ldots, x_m]$ such that
	\[
		\ord_u(f) < \ord_u(h) \implies f\prec h.
	\]
	Equip $\Bk[y_1, \ldots, y_n]$ with graded $P$-lex, as defined in
	Definition ~\ref{defn:graded-p-lex}, and equip $\Bk[y_1, \ldots,
	y_n] \otimes \Bk[u, x_1, \ldots, x_m]$ with block order.
	If $I'$ is the ideal of relations of $\Bk[u, x_1, \ldots, x_m]
	\to R_{\halfcan'}$ and $I$ is the ideal of relations of
	$\Bk[u, x_1, \ldots, x_m, y_1, \ldots, y_n]	\to R_{\halfcan}$, then

	\begin{align*}
		\initial(I) &= \initial(I') \Bk[u, x_1, \ldots, x_m, y_1, \ldots, y_n] 
											 + \langle U_i: 1 \le i \le n-1 \rangle
											 + \langle V \rangle \\
	\end{align*}
	where $V = \{x_i y_j: 1 \le i \le m, 1 \le j \le n\}$ and $U_i$ is
	the set of monomials of the form $\prod_{j = 1}^{i} y_j^{a_j}$ with
	$a_j \in \BN_{\ge 0}$ such that
	\begin{enumerate}
		\item[\customlabel{custom:sat-1-*}{(U-1)}] $\sum_{j = 1}^i a_j c_j \ge c_{i+1}$, \\
		\item[\customlabel{custom:sat-1-**}{(U-2)}] there does not exist $(b_1, \ldots b_i) \ne (a_1,
			\ldots a_i)$ with all $b_j \le a_j$ and $\sum_{j = 1}^i b_j
			c_j \ge	c_{i + 1}$, \\
		\item[\customlabel{custom:sat-1-***}{(U-3)}] there does not
			exist $r<i$ such that $\sum_{j=1}^r a_j c_j> c_{r + 1}$.
	\end{enumerate}

\item[(c)] Let $\tau = \max(1, \max_{1 \leq i \leq m}(\deg(x_i)))$.
	Then, $R_\halfcan$ is generated over $R_\halfcan'$
	in degrees up to $\beta$ with $I$ generated over $I'$ in
	degrees up to $\max(2 \beta, \beta + \tau).$
\end{enumerate}
\end{lem}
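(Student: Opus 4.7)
The plan is to construct the generators $y_i$ by a greedy pole-order argument, apply Lemma~\ref{lem:relations_from_generators_induction} to pin down the initial ideal, and then read off the degree bounds from the construction.

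For part (a), hypothesis~\eqref{eqn:deg1-sat-ind} tells me that $(R_\halfcan)_k / (R_{\halfcan'})_k$ has dimension $\lfloor k\alpha/\beta\rfloor$, and since $\halfcan - \halfcan' = (\alpha/\beta)P$ this new dimension is indexed by the extra pole orders at $P$. I set $k_1 = \lceil\beta/\alpha\rceil$, $c_1 = 1$, and choose $y_1 \in (R_\halfcan)_{k_1}$ with $-\ord_P^{\halfcan'}(y_1) = 1$. Inductively, given $y_1, \ldots, y_i$, I let $k_{i+1}$ be the smallest degree at which $R_{\halfcan'}[y_1, \ldots, y_i]$ fails to span $(R_\halfcan)_{k_{i+1}}$, let $c_{i+1}$ be the smallest attainable pole order at $P$ not yet realized there, and pick $y_{i+1}$ of degree $k_{i+1}$ with pole $c_{i+1}$. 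The inequalities $c_{i+1} > c_i$ and $k_{i+1} \geq k_i$ are automatic; since $c_1 = 1$ combinatorially generates all positive integers and all pole orders up to $\alpha$ become attainable by degree $\beta$, the process terminates with $c_n \leq \alpha$ and $k_n \leq \beta$.

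For part (b), I apply Lemma~\ref{lem:relations_from_generators_induction} with
\[
S = \bigl\{ u^e y_1^{a_1}\cdots y_n^{a_n} : e \geq 0,\ (a_1,\ldots,a_n)\neq 0,\ (a_1,\ldots,a_n)\text{ violates no }U_i\bigr\}
\]
and $T = V \cup \bigcup_i U_i$, under the prescribed block order. Condition~(1) reduces to a graded dimension count: I match $|S_k| = \lfloor k\alpha/\beta\rfloor$ by associating each $S$-monomial bijectively to its pole order $\sum a_j c_j \in \{1, \ldots, \lfloor k\alpha/\beta\rfloor\}$, with linear independence following from the fact that distinct $S$-monomials of the same degree have distinct pole orders at $P$. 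Condition~(2), $T \succ S \succ \Bk[u, x_1, \ldots, x_m]$, follows from the block-order design: all $y$-bearing monomials sit above $\Bk[u, x_1, \ldots, x_m]$, the $U_i$-monomials strictly dominate the $S$-monomials in graded $P$-lex (since $U_i$-tuples force a prefix pole $\geq c_{i+1}$, whereas $S$-tuples do not), and the $V$-monomials $x_i y_j$ sit above the pure-$y$ $S$-elements because the $u$-hypothesis on $\prec_x$ pushes $x$-factors above mere $u$-factors in the $x$-block. Condition~(3) is a direct case split: a monomial with no $y_j$ lies in $\Bk[u, x_1, \ldots, x_m]$, one with both $x_i$ and $y_j$ lies in $\langle V\rangle$, a pure-$y$ monomial satisfying some $U_i$ lies in $\langle U_i\rangle$, and otherwise it lies in $S$.

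For part (c), the generator bound $k_i \leq \beta$ is immediate from (a). The $V$-relation $x_i y_j$ has degree $\deg(x_i) + k_j \leq \tau + \beta$. For a $U_i$-relation, the minimality condition (U-2) forces $\sum a_j c_j < 2c_{i+1}$; since the greedy construction gives $k_j = \lceil c_j\beta/\alpha\rceil$, and (U-3) constrains every prefix sum, these combine to yield $\sum a_j k_j \leq 2\beta$, producing the stated $\max(2\beta, \beta+\tau)$ bound. The main obstacle is expected to be condition~(1) of Lemma~\ref{lem:relations_from_generators_induction}: verifying that the number of $U_i$-irreducible tuples at each degree $k$ is exactly $\lfloor k\alpha/\beta\rfloor$ requires delicately showing that (U-1) sets the pole threshold, (U-2) gives one representative per equivalence class, and (U-3) prevents double counting from prior reduction steps.
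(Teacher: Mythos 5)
Your architecture is the same as the paper's: the generators in part (a) come from the primitives of the cone under the line $y = (\alpha/\beta)x$ (your greedy selection produces exactly these), and part (b) is an application of Lemma~\ref{lem:relations_from_generators_induction} with $T = V \cup \bigcup_i U_i$. The difference is where the work lands, and that is where the gap is. You define $S$ as the set of monomials $u^e y_1^{a_1}\cdots y_n^{a_n}$ (with some $a_j>0$) that are not divisible by any $U_i$-element, which makes condition (3) of Lemma~\ref{lem:relations_from_generators_induction} true by fiat; but then condition (1) --- that in each degree $k$ these monomials realize each pole order in $\{1,\ldots,\lfloor k\alpha/\beta\rfloor\}$ exactly once --- becomes the entire combinatorial content of the lemma, and you explicitly defer it (``the main obstacle is expected to be condition (1)''). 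The paper distributes the work the other way: it defines $S$ recursively ($S_0=\{u^l\}$ and $S_i = y_j\cdot S_{i-j}$ with $c_j$ maximal among $c_1,\ldots,c_n$ not exceeding $i$), so that the basis property is a one-line dimension count, and then proves condition (3) by taking a monomial outside $S\cup\langle V\rangle\cup\Bk[u,x_1,\ldots,x_m]$, extracting a minimal sub-tuple $(\gamma_1,\ldots,\gamma_h)$, and verifying \ref{custom:sat-1-*}, \ref{custom:sat-1-**}, and \ref{custom:sat-1-***} for it. One of these two verifications must actually be carried out; as written, your proposal asserts the conclusion of that verification rather than proving it, so the identification of $\initial(I)$ is not established.

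Two secondary points. In part (c), the inference from ``$\sum_j a_j c_j < 2c_{i+1}$ and $k_j = \lceil c_j\beta/\alpha\rceil$'' to ``$\sum_j a_j k_j \le 2\beta$'' is not formal: the rounding defects $\lceil c_j\beta/\alpha\rceil - c_j\beta/\alpha$ can accumulate over the $\sum_j a_j$ factors, so one must use that the $(k_j,c_j)$ are consecutive best lower approximations (primitives of the cone), not merely lattice points with those coordinates. In part (a), the termination claim should rest not on ``$c_1=1$ generates all integers'' but on the splitting argument the paper uses: every lattice point under the line in degree $\beta+\omega$ decomposes as a point in degree $\beta$ plus one in degree $\omega$, because $(\beta,\alpha)$ itself lies on the line; this is what bounds all $k_i$ by $\beta$ and all $c_i$ by $\alpha$.
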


\sssec*{Idea of proof:}
The proof will be fairly involved. To show part (a), we use Riemann-Roch to reduce the
problem to one of finding primitives of cones; we then apply previous work on continued fractions to deduce these primitives. To conclude this part of the problem, we show by dimension count that these primitives induce all the generators of $R_L$ over $R_L$'.

first define a set of generators of $R_\halfcan$ over $R_{\halfcan'}$. 
We then use Riemann--Roch
to count the dimension of $R_\halfcan$ over $R_{\halfcan'}$ and
show that the set of elements we produce forms a basis.

Next, part (b) immediately follows from the conclusion of Lemma
~\ref{lem:relations_from_generators_induction}, reducing the proof
to verifying the hypotheses of that lemma. The first two hypotheses
follow immediately from the definition of block order. Checking the
third condition is quite technical, but follows from the construction of $V$ and the $U_i$'s.

\begin{proof}
{\bf Part (a):}
By
Equation ~\ref{eqn:deg1-sat-ind}, for any $k
\in \BN$ such that $\lfloor k \frac{ \alpha}{\beta} \rfloor > 0$,

\[
	h^0 (X, k \halfcan ) = h^0(X, k \halfcan') + \left\lfloor k \frac{\alpha}{\beta} \right\rfloor.
\]

\noindent
Thus, there exist rational sections $t_i$ of $\sco(\lfloor k \halfcan \rfloor)$ with $\ord_P^{L'}(t_i) = i$
for any $i \in \{0, \ldots, \lfloor k \frac{\alpha}{
\beta} \rfloor \}$.
This reduces the problem at hand to finding the primitives of the cone in $\mathbb{Z}\times \mathbb{Z}$ with $x$ and $y$ coordinates,
bounded by the lines $y=0$ and $y=\frac{\alpha}{\beta}x$.

Cohn's geometric interpretation of Hirzebruch--Jung fractions, as described in 
\cite{popescuP:the-geometry-of-continued-fractions-and-the-topology-of-surface-singularities} 
yields an explicit formula for these primitives, as given in \cite[Proposition 4.3]{popescuP:the-geometry-of-continued-fractions-and-the-topology-of-surface-singularities}.
All primitives lie in degrees at most $\beta$
(since points in degree $\gamma=\beta+\omega$ can be written as a sum of points
in degree $\beta$ with those in degree $\omega$).  Note that if two primitives
had the same $y$ coordinate, then they would differ by a multiple of $(1,0)$
and thus could not both be generators. So, all the $c_i$'s (the $y$-coordinates
of the primitives) are distinct, and we may assume $c_i<c_{i+1}$ for
all $i$.  Furthermore, note that in any degree $k$ greater than 1, there can be
at most one primitive,
since we can construct points with $y$ coordinates
between 0 and $\lfloor k\frac{\alpha}{\beta}\rfloor$ as a sum of elements in
degree 1 and degree $k-1$, and these new primitives have pole order greater
than all primitives of smaller $x$-coordinate.  Therefore, the ordering of
primitives with $c_i<c_{i+1}$ for all $i$ also ensures that the $x$-values
$k_i$ are in weakly increasing order.  Finally, let $y_1, \ldots, y_n$
be elements of $R_L$
with pole orders given by $c_1,\ldots, c_n$ and degrees
given by the $x$ coordinates of the corresponding primitives.

We now recursively define a $\Bk$-basis for $R_\halfcan$ over $R_{\halfcan'}$. Define 
\[
	S_0 = \{u^l : l \in \BN_{\ge 0}\}
\]
and for each $i \in \BN$, suppose $c_j$ is the maximal element of $c_1, \ldots, c_n$ such that $c_j \le i.$ Note that such a $c_j$ exists because $1=c_1\le i.$ Define
\[
	S_i = y_j \cdot S_{i-j}.
\]
Since each $y_j$ has pole order $c_j$, this recursive construction ensures that 
\[
	z\in S_i \implies -\ord_P^{\halfcan'}(z)=i.
\]
Then define 
\begin{equation}\label{eqn:sat-1-defining-S}
	S = \bigcup_{i=1}^{\infty} S_i.  
\end{equation}
Note that $S_0$ is not part of this union and in fact $S\cap S_0 = \emptyset$ by pole order considerations.

By Equation ~\ref{eqn:deg1-sat-ind} for $k \in \BN$,
\[
	h^0(X, \lfloor k \halfcan \rfloor) = h^0(X, \lfloor k \halfcan'
		\rfloor) + \left\lfloor k \frac{\alpha} {\beta} \right \rfloor,
\] 
so $S$ contains elements in degree $k \in \BN$ with each pole order in $\{1 \ldots, \lfloor k\frac{\alpha}{\beta}\rfloor\}$.  Thus, by dimension counting, $S$ forms a $\Bk$-basis for $R_\halfcan$ over $R_{\halfcan'}$, and we have proven part (a).

{\bf Part (b):}
Let $S$ be as defined in Equation \ref{eqn:sat-1-defining-S}, define $U_i$ and $V$ as in the lemma's statement, and set
\[
	T = \left(\bigcup_{i=1}^{n-1} U_i \right) \cup V.
\]

We check that $S$, $T$, and $\prec$ meet the hypothesis of Lemma \ref{lem:relations_from_generators_induction}.  In part (a), we showed that $S$ forms a $\Bk$-basis for $R_\halfcan$ over $R_{\halfcan'}$ giving condition (1) of Lemma ~\ref{lem:relations_from_generators_induction}.  Our choice of monomial order in $\Bk[y_1, \ldots, y_n]$ and block order for $\Bk[y_1, \ldots, y_n]\otimes \Bk[u, x_1, \ldots, x_m, y_1, \ldots, y_n]$ implies that $T\succ S\succ \Bk[u, x_1, \ldots, x_m]$, giving condition (2) of Lemma \ref{lem:relations_from_generators_induction}.  

It only remains to check condition (3) of Lemma ~\ref{lem:relations_from_generators_induction}. To do this, suppose $f\in \Bk[u, x_1, \ldots, x_m, y_1, \ldots, y_n]$ is a monomial not contained in $\Bk[u, x_1, \ldots, x_m]$, meaning there is some $j$ such that $y_j|f$.  Further suppose $f\not\in \langle V \rangle$, meaning that for each $i\in \{1, \ldots m\}$, $x_iy_j \nmid f$. Since $y_j \mid f$ but $x_iy_j \nmid f$, we obtain $x_i\nmid f$.  Therefore, $f\in \Bk[u, y_1, \ldots, y_n]$.  We note that $S$ generates $y_j \cdot (\Bk[u,y_1, \ldots, y_n])$ as a $\Bk$-algebra.  That is, all monomials of $\Bk[u, x_1, \ldots, x_m, y_1, \ldots, y_n]$ are contained in
\[
	\Bk[u, x_1, \ldots, x_m] \cup V \cup \left(\bigcup_{i=1}^n y_i \cdot \Bk[u, y_1, \ldots, y_n]\right). 
\]
Notice that $S$ generates the ideal $(y_1, \ldots, y_n)$ considered as an ideal of the subring $\Bk[u, y_1, \ldots, y_n]$.  If $f\in S$, then $f=u^b \prod_{j=1}^n {y_j}^{a_j}$. Let $l$ be maximal such that $a_l\ne 0$.
Fix $i \in \{1, \ldots, n\}$. If $y_i \cdot f \notin S$, define
\[
b_j = \begin{cases}
	a_j &\text{ if } j \neq i\\
	a_j + 1 &\text{ if } j=i.
\end{cases}
\]

\noindent
Then, there is some $h \in \BN$ such that $i \le h \le \max(i, l)$ satisfying $\prod_{j = 1}^h y_j^{b_j}\not\in S\cup S_0$, 
and for all $r<h$ we have $\prod_{j=1}^r {y_j}^{b_j}\in S \cup S_0$.  
Choose some tuple $(\gamma_1, \ldots, \gamma_n)$ which is minimal, in the sense that we cannot decrease any $\gamma_j$ 
and have the following still satisfied: each $\gamma_j\le b_j$ 
and $\prod_{j=1}^h {y_j}^{\gamma_j}\not\in S\cup S_0.$ Our recursive definition of 
$S$ and the fact that $\prod_{j=1}^r {y_j}^{b_j}\in S\cup S_0$ implies that for each $1\le r< h$, we have $\prod_{j=1}^r {y_j}^{\gamma_j}\in S\cup S_0$.

We now check that $\prod_{j=1}^h y_j^{\gamma_j}\in U_{h}$, by checking conditions \ref{custom:sat-1-*}, \ref{custom:sat-1-**}, and \ref{custom:sat-1-***}.  
Notice that if $r\le n$, $\omega_1, \ldots \omega_r\in \mathbb{Z}_{\ge 0}$, and $\prod_{j=1}^r y_j^{\omega_j}\in S$, then our definition of $S$ implies $y_r\prod_{j=1}^r y_j^{\omega_j}\in S$ if and only if $c_r$ is maximal among $c_1, \ldots c_n$ not greater than than $c_r + \sum_{j=1}^r c_j \omega_j$.
Therefore, since $\prod_{j=1}^{h-1} y_j^{b_j}\in S$ but $\prod_{j=1}^h y_j^{b_j}\not\in S$, $c_h$ must not be maximal (among $c_1, \ldots, c_n$) such that $c_h \le \sum_{j=1}^h b_jc_j$, which means $c_{h+1}\le \sum_{j=1}^h b_jc_j$.  Therefore $\sum_{j=1}^h {y_j}^{\gamma_j}$ satisfies \ref{custom:sat-1-*}.  

Next, suppose we choose $\omega_1, \ldots, \omega_h$ such that
$\omega_j\le \gamma_j$ for all $j$ and $\omega_l \le \gamma_l$ for
some $l$.  Then, for all $r< h$ we have $\prod_{j=1}^r
{y_j}^{\gamma_j}\in S$, implying that for all $r< h$ we also have
$\prod_{j=1}^r {y_j}^{\omega_j}\in S$.  Furthermore, since $(\gamma_1,
\ldots, \gamma_h)$ was chosen to be minimal to satisfy the previous
condition and that $\prod_{j=1}^h {y_j}^{\gamma_j}\not\in S\cup S_0$, we have $\prod_{j=1}^h {y_j}^{\omega_j}\in S$.  Therefore, $c_h$ is minimal among $c_1, \ldots c_n$ that is not greater than $\sum_{j=1}^h c_j \omega_j$, so in particular $\sum_{j=1}^h c_j\omega_j < c_{h+1}$; therefore, $\prod_{j=1}^h {y_j}^{\gamma_j}$ satisfies condition \ref{custom:sat-1-**}.  
Since for each $r<h$, we have $\prod_{j=1}^r y_{j}^{\gamma_j}\in S$ meaning that $\sum_{j=1}^r \gamma_j c_j < c_{r+1}$, condition \ref{custom:sat-1-***} holds for $\prod_{j=1}^h y_j^{\gamma_j}$.  
Thus $\prod_{j=1}^h y_j^{\gamma_j}\in U_{h}$.

Since the ideal in $\Bk[u, y_1, \ldots, y_n]$ generated by is $S$ is $(y_1, \ldots, y_n) \cdot \Bk[u, y_1, \ldots, y_n]$ and $\bigcup_{i=1}^{n-1} U_i$ contains every monomial in $\bigcup_{i=1}^n y_i \cdot \Bk[u, y_1, \ldots, y_n]-S$, we have shown that all monomials of $\Bk[u, x_1, \ldots, x_m, y_1, \ldots, y_n]$ are contained in
\begin{align*}
				& \Bk[u, x_1, \ldots, x_m] \cup \langle V \rangle \cup S \cup \left\langle \bigcup_{i=1}^{n-1} U_i \right\rangle \\
	\subseteq \; 	& S \cup \langle T\rangle \cup \initial(I') \Bk[u, x_1, \ldots, x_m, y_1, \ldots, y_n] \; \cup \; \Bk[u, x_1, \ldots, x_m].
\end{align*}
This shows condition (3) of Lemma ~\ref{lem:relations_from_generators_induction} holds.  Thus, the conditions of 
Lemma ~\ref{lem:relations_from_generators_induction} are met. Finally, Lemma ~\ref{lem:relations_from_generators_induction} implies part (b).

{\bf Part (c):}
Finally, (c) of this lemma follows immediately by looking at the constructions of parts (a) and (b).  
\end{proof}

\begin{rem}\label{rem:quad-gen}
If $\frac{\alpha}{\beta}=\frac{e_i-1}{2 e_i}$ for some odd $e_i \in \BN_{\geq 3}$, then $T,$ as defined in the beginning of the proof of (b) in Lemma \ref{lem:sat-1} consists only of terms of the form $x_i y_j$ and $y_i y_j$, which are quadratic in the generators.
\end{rem}

\begin{rem}\label{rem:sat-1-gen-lem-generic}
The generators in Lemma \ref{lem:sat-1} are generic if $\frac{\alpha}{\beta}\le 1$ (since there is at most one positive best lower approximation $\frac{c_i}{k_i}$ with $k_i=1$).  When $\frac{\alpha}{\beta}>1$, the choice of generators in degrees great than 1 is generic; furthermore, we can make the choice in degree 1 generic by choosing $\lfloor \frac{\alpha}{\beta}\rfloor$ linearly independent elements in degree 1 with pole at $P$ of order $\lfloor \frac{\alpha}{\beta}\rfloor$ rather than elements with poles of order $1, \ldots, \lfloor \frac{\alpha}{\beta}\rfloor$; this requires minor complications in the construction of generators of the ideal of relations.
\end{rem}

We now restrict our attention to log canonical rings of stacky curves.  Lemma \ref{lem:sat-1} accounts for many of the induction cases when the spin canonical ring is saturated in degree 1, as defined in Definition ~\ref{defn:sat}.  We complement Lemma ~\ref{lem:sat-1} with the following two lemmas that allow us to inductively add points, under certain conditions when the spin canonical ring is saturated in degree two or three.

\begin{lem}
\label{lem:sat-2}
Let $(\sx, \Delta, \halfcan)$ and $(\sx', \Delta, \halfcan')$ be log spin curves with the same coarse space $X = X'$ having signatures $(g; e_1, \ldots, e_r; \delta)$ and $(g, e_1, \ldots, e_{r- 1}, \delta)$, where $e_r=3$.  Suppose $g > 0,$ and, if $g = 1,$ then $\deg 3\halfcan' \geq 2.$ Then, by Riemann--Roch $\sat(Eff(L'))\le 2$.
Furthermore, let $R_{\halfcan'} = \Bk[x_2, x_3
, x_5, \ldots, x_m]/I'$ and let $\halfcan = \halfcan' + \frac{1}{
3}P$, where $P\in X$ is a base point of $\halfcan'$ (which includes the case when $H^0(X, \lfloor \halfcan'\rfloor) = 0$).
Suppose for $i \in \{2, 3\}$, we generically choose $x_i$ satisfying $\deg x_i = i$ and $\ord_P^{\halfcan'}(x_i)= 0$. Choose an ordering on $\Bk[x_2, \ldots, x_m]$ that satisfies
\begin{align*}
	\ord_{x_2}(f) < \ord_{x_2}(h) \implies f \prec h.
\end{align*}

\noindent
Then, the following statements hold.

\begin{enumerate}
	\item[(a)] General elements  $y_i \in H^0(\sx,iL)$ for $i \in
		\{3, 4\}$ satisfy $-\ord_P^{\halfcan'}(y_i) = 1$ and any such choice of elements
		$y_3, y_4,$ minimally generate $R_\halfcan$ over $R_{\halfcan'}$.
	\item[(b)] Equip $\Bk[y_3, y_4]$ with grevlex so that $y_3 \prec 
		y_4$
		and equip the ring $\Bk[y_3, y_4] \otimes \Bk[x_2, \ldots, x_m]$ with block 
		order. Then,
		\begin{align*}
			\initial(I) &= \initial(I')\Bk[x, y_3, y_4]+\langle y_4 x_j \mid 2 \leq j \leq m \rangle +\langle y_4^2 \rangle.
		\end{align*}
\end{enumerate}
\end{lem}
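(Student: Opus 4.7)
The plan is to mimic the structure of Lemma~\ref{lem:sat-1}, adapted to two changes: the coarse ring $R_{L'}$ is saturated in degree $2$ (not $1$), and the two new generators $y_3, y_4$ share the pole order $-1$ at $P$.

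For part (a), I would invoke the bound $\sat(\Eff(L'))\le 2$ (which follows from Riemann--Roch under the genus hypothesis) to obtain $h^0(\sx,kL)-h^0(\sx,kL')=\lfloor k/3\rfloor$ for every $k\ge 0$. In degrees $3$ and $4$ the dimension jump is exactly one, so a generic $y_i\in H^0(\sx,iL)$ lies outside the image of $H^0(\sx,iL')$ and hence satisfies $\ord_P^{L'}(y_i)=-1$. To show that $\{y_3,y_4\}$ generates $R_L$ over $R_{L'}$, I would induct on the degree: for each $(k,p)$ with $1\le p\le\lfloor k/3\rfloor$, a pole-$p$ element of degree $k$ is produced by $y_3^p\cdot r$ when $k-3p\in\Eff(L')$ (i.e.~$k-3p=0$ or $k-3p\ge 2$) and by $y_3^{p-1}y_4\cdot r$ when $k-3p=1$; saturation of $L'$ in degree $2$ guarantees the required $r\in R_{L'}$ exists. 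Minimality of $\{y_3,y_4\}$ over $R_{L'}$ is immediate from the pole-order condition.

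For part (b), I would apply Lemma~\ref{lem:relations_from_generators_induction} with
\[
T=\{y_4 x_j: 2\le j\le m\}\cup\{y_4^2\},\qquad S=\{y_3^a\cdot m: a\ge 1,\ m\text{ standard in }R_{L'}\}\cup\{y_3^a y_4: a\ge 0\}.
\]
Relations with initial terms in $T$ arise from pole analysis: in degree $4+\deg x_j$ the pole-$1$ quotient of $R_L/R_{L'}$ is one-dimensional and is already spanned by some $y_3\cdot m_j\in y_3\cdot R_{L'}$, yielding $y_4 x_j=\alpha_j y_3 m_j+r_j$ with $\alpha_j\ne 0$ and $r_j\in R_{L'}$; in degree $8$ the pole-$2$ quotient is spanned by $y_3^2 x_2$, giving $y_4^2=\alpha y_3^2 x_2+\beta\cdot(\text{pole-}1\text{ stuff})+r$. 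Under the block order these yield $y_4 x_j$ and $y_4^2$ as leading terms by a $y$-weight comparison ($y_4\succ y_3\succ 1$ for the first, and $y$-weight $8>6$ for the second). Condition (3) then follows from classifying each monomial by its $y$-part: it lies in $\Bk[x_2,\ldots,x_m]$ if there is no $y$-factor, in $\langle T\rangle$ if it contains $y_4^2$ or $y_4$ together with some $x_j$, in $S$ when its $y$-part is $y_3^a$ or $y_3^a y_4$ and its $x$-part is $R_{L'}$-standard, and in $\initial(I')\Bk[x_2,\ldots,x_m,y_3,y_4]$ otherwise.

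The main obstacle I anticipate is verifying condition (2) of Lemma~\ref{lem:relations_from_generators_induction}, namely $T\succ S$ in the block order. In grevlex a $y$-part such as $y_3^2$ of weighted degree $6$ naively beats $y_4$ of weighted degree $4$, so $y_3^2\in S$ could dominate $y_4 x_2\in T$ in the same total degree. I would address this either by refining the $y$-ordering in the spirit of the graded-$P$-lex construction used in Lemma~\ref{lem:sat-1} (which prioritizes pole order at $P$ over reverse-lex), or by bypassing Lemma~\ref{lem:relations_from_generators_induction} altogether and arguing via Hilbert series: having exhibited explicit generators of the claimed initial ideal, verify that the quotient $\Bk[x_2,\ldots,x_m,y_3,y_4]/(\initial(I')\Bk[\ldots]+\langle T\rangle)$ has the same Hilbert series as $R_L$, forcing equality of initial ideals.
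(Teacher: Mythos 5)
Your overall strategy is the paper's: part (a) by the Riemann--Roch pole-order count (the paper phrases it as basepoint-freeness of $3L$ and $4L$), and part (b) by feeding an explicit basis $S$ and relation set $T=\{y_4x_j\}\cup\{y_4^2\}$ into Lemma~\ref{lem:relations_from_generators_induction}. Your worry about condition (2) is well founded and is, in fact, a gap in the paper's one-line verification: in degree $6$ we have $y_3^2\in S$ and $y_4x_2\in T$, and since the block order compares $y$-parts by grevlex (hence by weighted degree first), $y_3^2\succ y_4$ and so $y_3^2\succ y_4x_2$; the literal statement $T\succ S$ fails. Your Hilbert-series fallback is the right practical repair and is exactly the verification method the paper uses in Example~\ref{eg:base-0-377}.

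The genuine gap is your choice of $S$ in part (b). The set $\{y_3^a m:\ a\ge 1,\ m\ \text{standard in}\ R_{L'}\}$ contributes $\sum_{a\ge 1}h^0(\sx',(k-3a)L')$ monomials in degree $k$, whereas $\dim_\Bk (R_L)_k/(R_{L'})_k=\lfloor k/3\rfloor$; since $h^0(\sx',jL')>1$ for some $j$ in every case where this lemma is invoked (e.g.\ $h^0(\sx',2L')\ge g$ when $g\ge 2$), your $S$ is not a $\Bk$-basis and condition (1) of Lemma~\ref{lem:relations_from_generators_induction} fails. The paper's basis is the much smaller $\{y_3^ax_2^bx_3^\epsilon:a\ge1\}\cup\{y_3^ay_4\}$, with exactly one monomial per degree and pole order. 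Once $S$ is shrunk to this, your classification for condition (3) acquires a hole: a monomial such as $y_3x_j$, for $x_j$ a second degree-two generator, lies in none of $S$, $\langle T\rangle$, $\initial(I')\Bk[x,y_3,y_4]$, or $\Bk[x_2,\ldots,x_m]$. Such monomials index genuine extra relations (a pole-order comparison gives $y_3x_j-c\,y_3x_2^bx_3^\epsilon\in R_{L'}$), whose initial terms are $y_3$ times a standard monomial of $I'$ and therefore cannot be absorbed into the right-hand side of (b) as displayed; $T$ must be enlarged by these (still quadratic) monomials before the argument closes. Your write-up, by over-populating $S$, hides rather than resolves this issue; to be fair, the paper's ``a similar (but much easier) computation'' conceals the same difficulty.
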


\begin{proof}
First, note that when genus is at least we shall show the assumptions on $g$ imply $H^0(\sx, 3 \halfcan), H^0(\sx, 4 \halfcan)$ are both basepoint-free: If $g \geq 2$ then $\deg 3 \halfcan > 2g - 1$ 
and $\deg 4 \halfcan > 2g - 1$, so $H^0(\sx, 3 \halfcan)$ and $H^0(\sx, 4 \halfcan)$ are base point free. If $g = 1$, we assume $\deg 3 \halfcan \geq 2 > 2g - 1$, so we also have $\deg 4 \halfcan \geq 2 > 2g - 1,$ so again $H^0(\sx, 3 \halfcan)$ and $H^0(\sx, 4 \halfcan)$ are base point free. 

Therefore, general elements $y_3$ and $y_4$ satisfy $-\ord_P^{\halfcan'}(y_i) = 1$ by Riemann--Roch, proving part (a).

A quick computation checks that the set

\begin{align*}
	S =	& \; \{ y_3^ax_2^b x_3^\epsilon \mid a \geq 0, b 
	\geq 0, \epsilon \in \{0, 1\}\} \cup \{ y_3^ay_4 \mid a \geq 0 \}
\end{align*}

\noindent
is a $\Bk$ basis for $R_\halfcan$ over $R_{\halfcan'}$, thus
completing part (a).

Letting
\begin{align*}
	T =   &\; \{ y_4 x_j \mid 2 \leq j \leq m \}\cup \{ y_4^2 \}
\end{align*}
a similar (but much easier) computation to that of lemma $\ref{lem:sat-1}$ determines that $S$, $T$, and $\prec,$ using the ordering defined in (b), meet the conditions of 
Lemma \ref{lem:relations_from_generators_induction}. 
Hence, by Lemma \ref{lem:relations_from_generators_induction}, part (b) holds.
\end{proof}

\begin{lem}
\label{lem:sat-3}
Suppose $\halfcan'$ is a log spin canonical divisor of $\sx'$ with coarse
space $X$ of genus 0 such that $\sat(\Eff(\halfcan')) = 3$ and $R_
{\halfcan'} \cong \Bk[x_3, x_4 , x_5, \ldots, x_m]/I'$. Choose $x_3, \ldots, x_m$ such that $-\ord^{L'}_P(x_i)=0$ for all $i,$ which is possible as $X$ has genus $0$. Let $L = L' + \frac
{1}{3}P$. Suppose $\deg x_i = i$ for $i \in \{3, 4, 5\}$ and that
the ordering on $\Bk[x_3, \ldots, x_m]$ satisfies
\begin{align*}
	\ord_{x_3}(f) < \ord_{x_3}(h) \implies f \prec h.
\end{align*}

\noindent
Then, the following statements hold.

\begin{enumerate}
	\item[(a)] General elements  $y_i \in H^0(\sx, iL)$ for $i \in \{3,
		4,5\}$ satisfy $-\ord_P^{\halfcan'}(y_i) = 1$ and any such choice of elements $y
		_3, y_4,$ and $y_5$ minimally generate $R_\halfcan$ over $R_{\halfcan'}$.
	\item[(b)] Equip $\Bk[y_3, y_4, y_5]$ with grevlex so that $y_3 \prec 
		y_4 \prec y_5$
		and equip the ring $\Bk[y_3, y_4, y_5] \otimes \Bk[x_3, \ldots, x_m]$ with the block order.  Then,
		\begin{align*}
			\initial(I) &= \initial(I) \Bk[y_3, y_4, y_5, x_3, \ldots, x_m] \\
			&+ \langle y_i x_j \mid 4 \leq i \leq 5, 3 \leq j \leq m\rangle \\
			&+ \langle y_i y_k \mid 4 \leq i \leq j \leq 5\rangle.
		\end{align*}
\end{enumerate}
\end{lem}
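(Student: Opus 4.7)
The argument closely mirrors that of Lemma \ref{lem:sat-2}, with an additional generator $y_5$ accounting for the saturation being $3$ rather than $2$. The strategy is: identify generators of $R_L$ over $R_{L'}$; construct a $\Bk$-basis $S$ of $R_L$ over $R_{L'}$; assemble the expected set $T$ of new initial terms of relations; then invoke Lemma \ref{lem:relations_from_generators_induction}.

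\textbf{Part (a).} Since $X$ has genus $0$ and $\sat(\Eff(L'))=3$, the genus-$0$ Riemann--Roch formula applied to the coarse space gives
\[
h^0(\sx, kL) - h^0(\sx, kL') = \lfloor k/3 \rfloor
\]
for every $k \ge 3$. In particular, the jump equals $1$ in each of degrees $3, 4, 5$, so a generic element $y_i \in H^0(\sx, iL)$ satisfies $-\ord_P^{L'}(y_i) = 1$. Minimality of $\{y_3, y_4, y_5\}$ as a set of generators of $R_L$ over $R_{L'}$ follows because the pole-order filtration at $P$ jumps once in each of these degrees, and no $y_i$ can be produced by multiplying an element of $R_{L'}$ by a previously chosen $y_j$ (the resulting degrees and pole orders are incompatible by the saturation hypothesis).

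\textbf{Part (b).} Set
\[
T = \bigl\{ y_i x_j : 4 \le i \le 5,\ 3 \le j \le m \bigr\} \cup \bigl\{ y_i y_k : 4 \le i \le k \le 5 \bigr\},
\]
and let $S$ be the collection of monomials in $\Bk[y_3, y_4, y_5, x_3, \ldots, x_m]$ that contain at least one $y_i$-factor, whose $x$-part is not in $\initial(I')$, and that contain no factor from $T$. It suffices to check the three hypotheses of Lemma \ref{lem:relations_from_generators_induction}. Condition (1), that $S$ is a $\Bk$-basis of $R_L$ over $R_{L'}$, reduces to a pole-order count: monomials $y_3^a \cdot m$ (with $a\ge 1$ and $m$ a basis monomial of $R_{L'}$) have pole order $a$ at $P$, while $y_3^a y_4$ and $y_3^a y_5$ have pole order $a+1$ and lie in degrees $3a+4$ and $3a+5$ respectively, so that each pole order from $1$ to $\lfloor k/3 \rfloor$ is represented in degree $k$ exactly once modulo $R_{L'}$. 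Condition (2), $T \succ S \succ \Bk[x_3, \ldots, x_m]$, is immediate from the block order combined with grevlex $y_3 \prec y_4 \prec y_5$. For condition (3), a monomial $f$ in the full polynomial ring is classified: if $f$ has no $y_i$-factor, then $f \in \Bk[x_3, \ldots, x_m] \cup \initial(I')\Bk[\ldots]$; if $f$ has a factor in $T$, then $f \in \langle T \rangle$; otherwise $f \in S$ (possibly after reducing its $x$-part modulo $\initial(I')$). Applying Lemma \ref{lem:relations_from_generators_induction} yields the claimed formula for $\initial(I)$.

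\textbf{Main obstacle.} The most delicate step is Condition (1): carrying out the precise dimension count to verify that $S$ has the correct cardinality in each degree, and that its elements are linearly independent modulo $R_{L'}$. Linear independence uses that $y_3^a m$ has pole order exactly $a$ at $P$, while $y_3^a y_4$ and $y_3^a y_5$ together fill in the two residue classes of degree modulo $3$ that $y_3$-products alone cannot reach. Matching these counts against the Riemann--Roch prediction $h^0(kL) - h^0(kL') = \lfloor k/3 \rfloor$ on the genus-$0$ coarse space is the core computation.
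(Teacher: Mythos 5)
There is a genuine gap in Part (b), and it sits exactly at the point your own ``main obstacle'' paragraph identifies. You define $S$ to be \emph{all} monomials containing a $y$-factor, with standard $x$-part and no factor from $T$; in particular $S$ contains $y_3^a\mu$ for \emph{every} standard monomial $\mu$ of $\Bk[x_3,\ldots,x_m]/\initial(I')$ of degree $k-3a$. But in degree $k$ the quotient $(R_\halfcan)_k/(R_{\halfcan'})_k$ has dimension only $\lfloor k/3\rfloor$, and the pole-order filtration at $P$ jumps by at most one at each step, so for fixed $a$ there is room for only \emph{one} basis element of pole order $a$. In the situation where the lemma is actually applied (signature $(0;3,\ldots,3;0)$ with $r\geq 5$, so $R_{\halfcan'}$ has several generators in degree $3$), your $S$ contains e.g.\ $y_3x_{3,1}$ and $y_3x_{3,2}$ for two distinct degree-$3$ generators: both have degree $6$ and pole order $1$, hence some linear combination of them lies in $(R_{\halfcan'})_6$. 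So your $S$ is not a $\Bk$-basis of $R_\halfcan$ over $R_{\halfcan'}$ (its cardinality in degree $k$ grows quadratically, not linearly), condition (1) of Lemma \ref{lem:relations_from_generators_induction} fails, and the sentence in your proof asserting that ``each pole order from $1$ to $\lfloor k/3\rfloor$ is represented in degree $k$ exactly once'' is false for the set you defined, even though it is precisely the property a correct $S$ must have.

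The paper's proof fixes this by choosing one monomial per (degree, pole order) pair: $S=\{y_3^ax_3^bx_4^{\epsilon}x_5^{\epsilon'}\}$ with $(\epsilon,\epsilon')\in\{(0,0),(0,1),(1,0)\}$, together with $\{y_3^ay_4\}$ and $\{y_3^ay_5\}$; the hypothesis $\ord_P^{\halfcan'}(x_i)=0$ and the degree arithmetic modulo $3$ guarantee exactly one element for each pole order $1,\ldots,\lfloor k/3\rfloor$ in each degree $k$. The price of shrinking $S$ is that condition (3) of Lemma \ref{lem:relations_from_generators_induction} is no longer automatic: monomials such as $y_3^a\mu$ with $\mu$ a standard $x$-monomial not of the form $x_3^bx_4^{\epsilon}x_5^{\epsilon'}$ must be shown to lie in $\langle T\rangle\cup\initial(I')\Bk[x,y]$ (or their excess accounted for by further relations), and this is where the real work of the lemma lies --- work your argument skips entirely because your over-large $S$ swallows those monomials. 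Part (a) of your proposal is essentially the paper's argument and is fine, modulo making explicit that minimality uses $(R_{\halfcan'})_1=(R_{\halfcan'})_2=0$ from the saturation hypothesis.
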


\begin{proof}
Since $X$ has genus 0, general
elements $y_3, y_4,$ and $y_5$ in weights 3, 4, and 5 respectively satisfy $-\ord_P^{\halfcan'}(y_i) =
1$. We see by pole order considerations that

\begin{align}
\label{eqn:add_one_generator}
	\begin{split}
		S=	&\{ y_3^ax_3^b x_4^\epsilon x_5^{\epsilon'} \mid a \geq 0, b 
		\geq 0,(\epsilon, \epsilon') \in \{(0,0),(0,1),(1,0)\} \} \\
		\cup \;&\{y_3^ay_4, y_3^by_5 \mid a \geq 0, b \geq 0 \}
	\end{split}
\end{align}

\noindent forms a $\Bk$ basis for $R_\halfcan$ over $R_{\halfcan'}$, which concludes part (a) of the proof.

Setting
\[
	T = \{ y_i x_j \mid 4 \leq i \leq 5, 3 \leq j \leq m\} \cup \{ y_i y_k \mid 4 \leq i \leq j \leq 5\} 
\]
we can argue similarly to Lemma \ref{lem:sat-1} that $S$ and $T$ along with $\prec$ satisfy the hypothesis of Lemma \ref{lem:relations_from_generators_induction}, concluding part (b).
\end{proof}

One can prove similar results in cases with different conditions on saturation, base-point freeness, and the coefficients of added points, but only the cases of Lemmas ~\ref{lem:sat-1}, ~\ref{lem:sat-2}, and ~\ref{lem:sat-3} are needed for the remainder of this paper.  We next turn to an inductive method to increment the $e_i$'s.

\ssec{Raising Stabilizer Orders}
\label{ssec:raise-orders}
In this subsection, we present Lemma ~\ref{lem:raise-stacky-order},
whose proof is almost identical to one of Voight and Zureick-Brown
\cite[Theorem 8.5.7]{vzb:stacky}. Lemma
~\ref{lem:raise-stacky-order} implies that if the main result,
Theorem ~\ref{thm:main}, holds for a curve with signature $(g; e_1',
\ldots, e_\ell', e_{\ell + 1}' \ldots, e_r';\delta)$ with $e_{\ell + 1}' =
\cdots = e_r'$ satisfying an admissibility condition (cf. Definition
~\ref{defn:admissible}), then Theorem ~\ref{thm:main} also holds for
a curve with signature $(g; e_1', \ldots, e_\ell', e_{\ell + 1}' + 2,
\ldots, e_r' + 2; \delta)$.

First, we define a notion of admissibility that is quite similar to
the admissibility defined by Voight and Zureick-Brown
\cite[Definition 8.5.1]{vzb:stacky}. Our notion is an adaptation
the case of log spin canonical divisors.

One key difference between the notion of admissibility in Definition ~\ref{defn:admissible} and that of Voight and Zureick-Brown \cite[Definition 8.5.1]{vzb:stacky} is that
we cannot assume that $\{P_i\} \cap \Supp(\halfcan_X') = \emptyset$, as $ \halfcan_X'$ may have no nonzero global sections. We circumvent this issue by working with the orders of zeros and poles relative to $\halfcan_X'$, rather than relative to the $\sco_X,$ using Definition ~\ref{defn:order-sup}.

\begin{defn}
\label{defn:admissible}
Let $(\sx', \Delta, L')$ be a log spin
curve, with coarse space $X$ and stacky points $Q_1, \ldots, Q_r$.  Let $J \subset
\{1, \ldots, r\}$. Let $e_i := e_i'+ 2 \chi_J (i)$ where

\[
\chi_J(i) = \begin{cases}
	1, &\text{ if }i \in J\\
	0, &\text{ otherwise. } 
\end{cases}
\]

Let $R'$ be the canonical ring associated to $\sx'$. Define $(\sx', \halfcan', J)$
to be {\bf admissible} if $R'$ admits a presentation
\begin{align*}
	R' \cong \left( \Bk[x_1, \ldots, x_m] \otimes \Bk[y_{i, e_i'}]_{i \in J} \right)/I'
\end{align*}

\noindent
with each $y_{i, e_i'}$ viewed in $R'$ through the image of this isomorphism and such that for each $i \in J$ such that the following
three conditions hold:
\begin{enumerate}
	\item[\customlabel{custom:Ad-i}{(Ad-i)}] First, 
		\begin{align*}
		\deg y_{e_i'} &= e_i'  &\text{ and } &&-\ord_{Q_i}^{\halfcan_X'}(y_{i, e_i'})
			= \frac{e_i'- 1}{2}.
		\end{align*}
	\item[\customlabel{custom:Ad-ii}{(Ad-ii)}] Second, every generator $z 
		\in \{x_1, \ldots, x_m\} \cup \{y_{j, e_i'}: j\in J - \{i\} \}$ 
		satisfies
		\begin{align*}
			\frac{-\ord_{Q_i}^{\halfcan_X'}(z)}{\deg z} < \subhalf {e_i'}.
		\end{align*}
	\item[\customlabel{custom:Ad-iii}{(Ad-iii)}] Third, we have
		\begin{align*}
			\deg \lfloor e_i L' \rfloor \geq \max(2g - 1,0) + \max_{k \geq 0} \# S_{\sigma, J}(i, k)
		\end{align*}
		where
		\begin{align*}
			S_{\sigma, J}(i, k) := \{j \in J : j \neq i \text{ and } e_j'+2k
			\mid e_i - e_j'\}.
		\end{align*}
\end{enumerate}
\end{defn}

Before using this admissibility condition in the lemmas of
this section, we give a few explicit examples for which admissibility
holds.

\begin{example}
\label{eg:base-0-377-adm}
Here, we explicitly check admissibility in the context of
Example ~\ref{eg:base-0-377}. Recall that the setup is that
$(\sx', 0, \halfcan')$ is a log spin curve with signature $\sigma := (0; 3, 7, 7; 0)$
and $\halfcan' \sim -\infty + \frac{1}{3}P_1 + \frac{3}{7}P_2 + \frac{3}{7}P_3$,
where $P_1, P_2$, and $P_3$ are distinct points. We demonstrate
that $(\sx', \halfcan', \{2, 3\})$ is admissible.

As shown in Example ~\ref{eg:base-0-377}, the log spin canonical
ring corresponding to $(\sx', 0, \halfcan')$ has a presentation
$\Bk[x_{7, 2}, x_{7, 1}, x_{5, 1}, x_{3, 1}] / I$ with
$x_{i, j} \in H^0(\sx', i \halfcan')$. Furthermore, we were
able to chose generators such that $x_{7, 1}$ has maximal pole
order at $P_2$ and $x_{7, 2}$ has maximal pole order at $P_3$.

Use the presentation given above with $y _{2, e_2'} :=
x_{7, 1}$ and $y_{3, e_3'} :=  x_{7, 2}$.
We see that $(\sx', \halfcan', \{2, 3\})$ immediately satisfies
\ref{custom:Ad-i} of Definition ~\ref{defn:admissible}. Next we
check \ref{custom:Ad-ii}.  We may also choose
pole orders of the generators such that $y_{i, e_i'}$ is the only
generator lying on the line $-ord_{P_i}(z) = \deg (\frac{3k}{7} z)$
in the $(\deg z, -ord_{P_i}(z))$  lattice and with the other
generators lying below the line as seen in Figure ~\ref{fig:377}
(e.g. the pole orders $(-ord_{P_1}(z)$, $-ord_{P_2}(z)$, $-ord_{P_3}
(z))$ may be chosen to be $(1, 1, 1)$, $(1, 2, 2)$, $(2, 3, 2)$,
and $(2, 2, 3)$ for $z = x_{3, 1}$, $x_{5, 1}$, $x_{7, 1}$, and
$x_{7, 2}$ respectively).

\begin{figure}[H]
\includegraphics{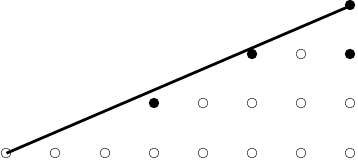} \\
\caption{Generators in the $(\deg z, -ord_{P_2}(z))$ lattice}
\label{fig:377}
\end{figure}

Also note that $e_j' + 2k = 7 + 2k \nmid 2 = e_i - e_j'$ for all
$i, j \in J = \{2, 3\}$ such that $j \neq i$ and for all $k \geq
0$. Thus, $s_{\sigma, J}(i, k) = \emptyset$ for each $i \in J$.
Furthermore, $\deg \lfloor e_i \halfcan \rfloor = \deg \lfloor 9
\halfcan \rfloor = 2 \lfloor \frac{4 \cdot 9}{9} \rfloor + \lfloor
\frac{9}{3} \rfloor - 9 = 2$, so \ref{custom:Ad-iii} is satisfied and
$(\sigma, \sx', \{2, 3\})$ is admissible.
\end{example}

\begin{example}
\label{eg:base-1-33-adm}
Let $(\sx',0 , \halfcan')$ be a log spin curve of genus $1$ with
$\halfcan' = P - Q + \frac{1}{3}P_1 + \frac{1}{3}P_2$, as in Example
~\ref{eg:base-1-33}. Here, we check that $(\sx', \halfcan', \{1,2\})$
is admissible.

Recall that
$$R_{\halfcan'} \cong \Bk[u, x_3, y_3, y_4]/(x_3 y_3- \alpha uy_4, y_4^2 - \beta x_3^2 u - \gamma y_3^2u).$$
We have two generators $x_3$ and $y_3$ in degree 3 with a pole of order $1=\frac{3- 1}{2}$ by construction. Hence, \ref{custom:Ad-i} holds. We next check \ref{custom:Ad-ii} for the point $P_1$, as the case of $P_2$ is symmetric. Here, by construction
\[
\frac{-\ord_{P_1}(z)}{\ord(z)} = \begin{cases}
	0 &\text{ if }z \in \{u, y_3\}\\
	\frac{1}{4} &\text{ if }z = y_4.
\end{cases}
\]

\noindent
Since $0, \frac{1}{4} < \frac{1}{3}$, \ref{custom:Ad-ii} holds.
Finally, to check \ref{custom:Ad-iii}, note that $\max_{k \geq 0}S_{\sigma,J}(i,k) = 0.$ Therefore, $\deg \lfloor 5L \rfloor  = 2 > 1 = (2 g - 1) + 0$.
\end{example}

The following lemma will slightly strengthen condition
\ref{custom:Ad-ii} from Definition ~\ref{defn:admissible}.
This improvement is crucial in the proof of part (c) of Lemma
~\ref{lem:raise-stacky-order}.

\begin{lem}
\label{lem:admissible_inequality}
For any $z$ as in condition \ref{custom:Ad-ii} of definition ~\ref{defn:admissible} the inequality \ref{custom:Ad-ii} implies the
tighter inequality that

\begin{align*}
	-\ord_{Q_i}
^{\halfcan_X'}(z) \leq \deg(z) \subhalf{e_i'} -\frac{1}{e_i'}
\end{align*}
\end{lem}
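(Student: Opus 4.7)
The plan is to clear denominators in the strict inequality of \ref{custom:Ad-ii}, observe that both sides are even integers, and then exploit the fact that a strict inequality between even integers must in fact differ by at least $2$.

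Concretely, let $d := \deg z$ (a positive integer, since $z$ is a generator) and $a := -\ord_{Q_i}^{\halfcan_X'}(z)$ (an integer, by Definition \ref{defn:order-sup}). Condition \ref{custom:Ad-ii} becomes
\[
	\frac{a}{d} < \frac{e_i' - 1}{2 e_i'},
\]
or equivalently
\[
	2 e_i' a < d(e_i' - 1).
\]
The left side is obviously even, and the right side is even because $e_i'$ is odd, which follows from Remark \ref{rem:odd-denom} (since $L'$ has a $\tfrac{e_i'-1}{2e_i'}P_i$ term in lowest form and this only lies in $\di\sx'$ when $e_i'$ is odd). A strict inequality between two even integers must therefore have gap at least $2$, giving
\[
	2 e_i' a \le d(e_i' - 1) - 2.
\]
Dividing by $2 e_i'$ yields exactly $a \le d \cdot \subhalf{e_i'} - \tfrac{1}{e_i'}$, as required.

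The main thing to get right is justifying the use of oddness of $e_i'$; the rest is bookkeeping. I do not anticipate any real obstacle, but I would briefly note why $d > 0$ (so that clearing denominators preserves the direction of the inequality) — this holds because $z$ is a generator of a graded ring in positive degree — and why $a$ is an integer — this is immediate from $z \in H^0(X,\lfloor m L'\rfloor)$ for $m = \deg z$, so that $\ord_{Q_i}^{\halfcan_X'}(z) = \ord_{Q_i}(z) + \lfloor m L'\rfloor_{Q_i}$ is an integer.
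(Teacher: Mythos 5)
Your proof is correct and is essentially the same as the paper's: both arguments hinge on the oddness of $e_i'$ (so that $e_i'-1$ is even and the bound $\deg(z)\subhalf{e_i'}$ has denominator dividing $e_i'$ rather than $2e_i'$), combined with the integrality of $-\ord_{Q_i}^{\halfcan_X'}(z)$, to upgrade the strict inequality by $\tfrac{1}{e_i'}$. The paper phrases this by writing $\deg(z)\subhalf{e_i'}=\tfrac{\alpha}{\beta}$ in lowest terms and noting $\beta\mid e_i'$, while you clear denominators and use parity of both sides; the arithmetic content is identical.
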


\begin{proof}
We know by \ref{custom:Ad-ii} that

\begin{align*}
	-\ord_{Q_i}
^{\halfcan_X'}(z) < \deg(z) \subhalf{e_i'}
\end{align*}

\noindent
If we write $\frac{\alpha}{\beta} = \deg(z) \frac{e_i'- 1}{2e_i'}$ 
as a fraction in lowest terms, then we see $\beta \mid e_i'$ since $
e_i'- 1$ is even. Therefore, since $-\ord_{Q_i}
^{\halfcan_X'}(z)$ is an integer, 
we must have

\begin{align*}
	-\ord_{Q_i}
^{\halfcan_X'}(z) \leq \deg(z) \subhalf{e_i'} - \frac{1}{\beta} \leq 
	\deg(z) \subhalf{e_i'} - \frac{1}{e_i'}.
\end{align*}
\end{proof}

\begin{lem}
\label{lem:admissible_subset}
If $(\sx', \Delta, \halfcan')$ is a log spin curve, $(\sx', \halfcan', J)$ is admissible and $W \subseteq J$ is any subset,
then $(\sx', L', W)$ is also admissible.
\end{lem}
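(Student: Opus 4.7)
\medskip

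\noindent\textbf{Proof proposal.} The plan is to take the presentation of $R'$ witnessing the $J$-admissibility of $(\sx', \halfcan', J)$ and reinterpret it as a presentation witnessing the $W$-admissibility of $(\sx', \halfcan', W)$, by simply moving each $y_{i, e_i'}$ with $i \in J \setminus W$ from the ``$y$-block'' into the ``$x$-block''. More precisely, if the $J$-admissible presentation is
\[
R' \cong \bigl(\Bk[x_1, \ldots, x_m] \otimes \Bk[y_{i, e_i'}]_{i \in J}\bigr)/I',
\]
then relabelling the generators $\{y_{i, e_i'} : i \in J \setminus W\}$ as additional $x$-variables produces a presentation
\[
R' \cong \bigl(\Bk[x_1, \ldots, x_m, \{y_{i, e_i'}\}_{i \in J \setminus W}] \otimes \Bk[y_{i, e_i'}]_{i \in W}\bigr)/I'
\]
of the exact shape required by Definition~\ref{defn:admissible} for the subset $W$.

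With this presentation fixed, I would verify the three admissibility conditions for each $i \in W$. For (\ref{custom:Ad-i}), note that since $i \in W \subseteq J$, the condition on $\deg y_{i, e_i'}$ and $-\ord_{Q_i}^{\halfcan_X'}(y_{i, e_i'})$ is inherited verbatim. For (\ref{custom:Ad-ii}), observe that the set of ``other generators'' associated to $i$ under the new presentation, namely $\{x_1, \ldots, x_m\} \cup \{y_{j, e_j'}\}_{j \in J \setminus W} \cup \{y_{j, e_j'}\}_{j \in W \setminus \{i\}}$, coincides setwise with $\{x_1, \ldots, x_m\} \cup \{y_{j, e_j'}\}_{j \in J \setminus \{i\}}$, which is exactly the set on which the original $J$-admissibility hypothesis supplies the required inequality.

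For (\ref{custom:Ad-iii}), the key observation is that the quantity $e_i = e_i' + 2\chi_{(\cdot)}(i)$ depends on the index set only through $\chi$, but for $i \in W$ we have $\chi_W(i) = 1 = \chi_J(i)$, so $e_i$ is unchanged. Moreover, replacing $J$ by $W$ in the definition of $S_{\sigma, (\cdot)}(i,k)$ only tightens the membership condition $j \in (\cdot)$, hence $S_{\sigma, W}(i, k) \subseteq S_{\sigma, J}(i, k)$ for every $k \geq 0$. Therefore
\[
\max_{k \geq 0} \# S_{\sigma, W}(i, k) \leq \max_{k \geq 0} \# S_{\sigma, J}(i, k),
\]
and the inequality $\deg \lfloor e_i L' \rfloor \geq \max(2g-1, 0) + \max_k \# S_{\sigma, J}(i, k)$ from $J$-admissibility implies the corresponding inequality for $W$.

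The argument is essentially bookkeeping, so there is no serious obstacle; the only point requiring a moment's care is to confirm that the $e_i$ appearing in (\ref{custom:Ad-ii}) and (\ref{custom:Ad-iii}) is computed from the correct index set, which is resolved by the observation above that $\chi_W(i) = \chi_J(i)$ for all $i \in W$.
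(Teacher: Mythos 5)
Your proposal is correct and follows essentially the same route as the paper, whose proof is the one-line assertion that each of \ref{custom:Ad-i}, \ref{custom:Ad-ii}, and \ref{custom:Ad-iii} for $J$ implies the corresponding condition for $W$. You simply make explicit the bookkeeping the paper leaves implicit (relabelling the $y_{i,e_i'}$ for $i \in J \setminus W$ as $x$-variables, the setwise identity of the ``other generators'' in \ref{custom:Ad-ii}, and the containment $S_{\sigma,W}(i,k) \subseteq S_{\sigma,J}(i,k)$), all of which is accurate.
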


\begin{proof}
Each of the conditions \ref{custom:Ad-i}, \ref{custom:Ad-ii}, and \ref{custom:Ad-iii} hold for $W$
if they hold for $J$.
\end{proof}

\begin{rem}
\label{rem:three-cases}
For our inductive arguments in Theorems ~\ref{thm:g-high-main},
~\ref{thm:g-1-main}, and ~\ref{thm:g-0-main}, we will often add in a
single stacky point with stabilizer order $3$. Say $(\sx, \Delta,
\halfcan)$ is a log spin curve with signature $\sigma := (g; e_1,
\ldots, e_r; \delta) = (g; 3, \ldots, 3; \delta)$, and our base cases
include signatures $\sigma$ characterized by one of the following:
cases, which we will soon refer to in Lemma
~\ref{lem:raise-stacky-order}:

\begin{enumerate}
	\item[\customlabel{custom:three-cases-1}{(1)}] $g = 0, e_1 =
		\cdots = e_r = 3, \delta = 0$, and $r \geq 5$
	\item[\customlabel{custom:three-cases-2}{(2)}] $g = 1, e_1 =
		\cdots = e_r = 3, \delta = 0$, and $ r \geq 2$
	\item[\customlabel{custom:three-cases-3}{(3)}] $g \geq 2, e_1 =
		\cdots = e_r = 3, \delta$ arbitrary, and $r \geq 1$.
\end{enumerate}
\end{rem}

\begin{lem}
\label{lem:raise-stacky-order}
Suppose $(\sx', \Delta, \halfcan')$ is a log spin curve with coarse
space $X'$ and signature $\sigma := (g; e_1', \ldots, e_r'; \delta)$.
Define $R':= R_{\halfcan'}$.  Further, assume either 
\begin{enumerate}
	\item $(\sx', \halfcan', J)$ is admissible with generators $x_1,
		\ldots, x_m \in R'$ and $y_{i, e_i'} \in R' $ for all $i \in J$, as 
		in Definition ~\ref{defn:admissible} or
	\item $\sigma$ is one of the signatures described in Cases
		\ref{custom:three-cases-1}, \ref{custom:three-cases-2} and
		\ref{custom:three-cases-3} of Remark ~\ref{rem:three-cases} and
		$y_{1, 3} = y_{2,3} = \ldots= y_{r,3}$ is a rational section of
		$\sco(\halfcan_X)$ with $\ord_{P_i}^{\halfcan_X'}(y_{1,3}) = 1$
		for $1 \leq  i \leq r$.
\end{enumerate}
Let
$(\sx, \Delta, \halfcan)$ be another log spin curve
with coarse space $X$, so that $X \cong X'$ and with signature $(g; e_1, \ldots, e_r;
\delta)$ such that $e_i = e_i' + 2$ for all $i \in J$ and
$e_j = e_j'$ for $j \notin J$. Define $R := R_\halfcan$.  Then the following are true:
\begin{enumerate}
	\item[(a)] For all $i \in J$, there exists $y_{i, e_i} \in
		H^0(\sx, e_i(K_\sx))$ so that
		\begin{align*}
			-\ord_{Q_i}
^{\halfcan_X'}(y_{i, e_i}) = \frac{e_i - 1}{2}
		\end{align*}
		and
		\begin{align*}
			\frac{-\ord_{Q_j}
^{\halfcan_X'}(y_{i, e_i})}{\deg (y_{i, e_i})} \leq 
\subhalf{
			e_j'} - \frac{1}{\deg(y_{i, e_i})e_j'}
		\end{align*}
		for all $j \in J$ with $j \neq i$.
	\item[(b)] A choice of elements $y_{1, e_1}, \ldots, y_{r, e_r}$ as in part (a) minimally
		generate $R$ over $R'$.
	\item[(c)] Endow $\Bk[y_{i, e_i}]_{i\in J}$ and $ \Bk[x_1, \ldots, x_m, y_{i, e_i'}]_{i\in J}$ with graded monomial orders and give 
$\Bk[y_{i, e_i}]_{i\in J} \otimes \Bk[x_1, \ldots, x_m, y_{i, e_i'}]_{i\in J}$
 block order.  Let $I$ be the kernel of $\Bk[x_1, \ldots, x_m, y_{i, e_i'}, y_{i, e_i}]_{i\in J} \to R$.
  Then,
		\begin{align*}
			\initial(I) \;	&= \; \initial(I')\Bk[x, y] \\
					&+ \; \langle y_{j, e_j}x_i  : 1\le i \le m, j\in J\rangle \\
					&+ \; \langle y_{j, e_j}y_{i, e_i'}: i,j\in J, i\ne j\rangle \\
					&+ \; \langle y_{j,e_j}y_{i,e_i}: i,j\in J, i\ne j\rangle.
		\end{align*}
	\item[(d)] The triple $(\sx, \halfcan', J)$ is admissible.
\end{enumerate}
\end{lem}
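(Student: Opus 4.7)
The plan is to follow the strategy of \cite[Theorem 8.5.7]{vzb:stacky}, adapted to the log spin canonical setting, carrying out parts (a)--(d) in order. Throughout I would work on the coarse space $X$ using rational sections and the pole-order calculus $\ord^{L'_X}$, invoking Riemann--Roch on $X$ to count dimensions of the relevant graded pieces.

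For part (a), I would apply Riemann--Roch on $X$ to compare $h^0(X, \lfloor e_i L \rfloor)$ with $h^0(X, \lfloor e_i L \rfloor - Q_i)$. Condition \ref{custom:Ad-iii} is calibrated so that this difference is strictly positive, producing a section $y_{i,e_i}$ with pole order exactly $(e_i - 1)/2$ at $Q_i$. A generic choice among such sections minimizes the pole orders at the remaining $Q_j$, and the denominator argument of Lemma \ref{lem:admissible_inequality} upgrades the minimized inequalities to the sharper bound $\frac{-\ord_{Q_j}^{L'_X}(y_{i, e_i})}{\deg(y_{i, e_i})} \leq \subhalf{e_j'} - \frac{1}{\deg(y_{i, e_i})\, e_j'}$ claimed in (a). In case (2), where no admissibility structure is assumed, each of the three listed base signatures admits a direct construction of $y_{i, e_i}$ from the rational section $y_{1, 3}$ together with the known generators of $R'$, with pole inequalities checked by hand.

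For parts (b) and (c), I would apply Lemma \ref{lem:relations_from_generators_induction} with
\begin{align*}
	T = \{y_{j,e_j} x_i\} \cup \{y_{j,e_j} y_{i,e_i'} : i \neq j\} \cup \{y_{j,e_j} y_{i,e_i} : i \neq j\}
\end{align*}
and a $\Bk$-basis $S$ for $R$ over $R'$ consisting of a single new variable $y_{i, e_i}$ times a pure monomial in the $y_{i, e_i'}$ (scaled by a base variable to the correct degree). That $S$ is indeed a $\Bk$-basis follows by matching Hilbert series computed via \cite[Theorem 4.2.1]{zhou:orbifold-riemann-roch-and-hilbert-series}. Hypothesis (2) of Lemma \ref{lem:relations_from_generators_induction} is immediate from the block order, and hypothesis (3) reduces to a monomial case-check: any monomial either contains no new variable $y_{j, e_j}$ (and reduces via $\initial(I')$), contains exactly one (and lies in $S$ or $\langle T \rangle$ depending on the accompanying factors), or contains two distinct new variables (and therefore lies in $\langle T \rangle$).

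For part (d), I would verify the three admissibility conditions for the triple $(\sx, L, J)$ with generator list extended by the $y_{i, e_i}$. Condition \ref{custom:Ad-i} is immediate from (a). Condition \ref{custom:Ad-ii} for the new generators $y_{i, e_i}$ follows from the bound established in (a), while for the unchanged generators $x_1, \ldots, x_m$ and $y_{j, e_j'}$ it follows from the original \ref{custom:Ad-ii} combined with the observation that the new threshold $\subhalf{e_i}$ exceeds the old threshold $\subhalf{e_i'}$. The main obstacle is \ref{custom:Ad-iii}, which requires $\deg \lfloor \widetilde{e}_i L \rfloor \geq \max(2g - 1, 0) + \max_k \#S_{\sigma_{\mathrm{new}}, J}(i, k)$ for $\widetilde{e}_i = e_i + 2$; since $\deg \lfloor \widetilde{e}_i L \rfloor$ grows linearly in $e_i$ while $\#S_{\sigma_{\mathrm{new}}, J}(i, k) \leq |J| - 1$, the inequality automatically holds once $e_i$ is large enough, and I expect the bulk of the technical work to be a careful arithmetic check on the divisibility structure of $S_{\sigma_{\mathrm{new}}, J}(i, k)$ in the remaining small cases.
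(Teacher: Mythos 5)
Your overall strategy matches the paper's, but two steps conceal genuine gaps. In part (a), the problematic case is $j \in J$ with $e_j' \mid e_i$: then $e_i \subhalf{e_j'}$ is an integer, so a \emph{generic} section of $\lfloor e_i \halfcan' \rfloor$ has pole order exactly $e_i \subhalf{e_j'}$ at $Q_j$, which violates the bound you need (and Lemma~\ref{lem:admissible_inequality} cannot ``upgrade'' anything here, since that lemma only applies to elements already satisfying the strict inequality \ref{custom:Ad-ii} --- which is precisely what fails). The paper's fix is to impose an extra vanishing condition at each such $Q_j$, i.e.\ to take $y_{i,e_i}$ general in $H^0(\sx', e_i\halfcan' - E_i + Q_i)$ with $E_i = \sum_{j \in S(i,0)} Q_j$; this is also the true role of \ref{custom:Ad-iii}, whose term $\max_k \# S_{\sigma,J}(i,k)$ exists exactly to keep this smaller linear system basepoint-free. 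You use \ref{custom:Ad-iii} only to produce the pole at $Q_i$, so your construction does not yield the second inequality of (a), and without that sharpened inequality the pole-order computation showing $y_{i,e_i}z \in R'$ (the paper's Case $j=i$, which consumes the $-\tfrac{1}{e_j'}$ slack) collapses.

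The second gap is your basis $S$ for parts (b)--(c): monomials containing a \emph{single} factor of a new variable do not span $R$ over $R'$. For instance $y_{i,e_i}^2$ has pole order $e_i - 1$ at $Q_i$, strictly larger than that of anything in $y_{i,e_i} \cdot R'$, so it is not in their span; the correct basis consists of all $y_{i,e_i'}^a y_{i,e_i}^b$ with $b \geq 1$, and the paper justifies spanning by the determinant-one computation for the cone generated by $(e_i-2, \tfrac{e_i-3}{2})$ and $(e_i, \tfrac{e_i-1}{2})$. With your $S$, the monomial $y_{i,e_i}^2$ lies in neither $S$ nor $\langle T \rangle$ (your $T$, like the paper's, only contains products $y_{j,e_j}y_{i,e_i}$ with $i \neq j$), so hypothesis (3) of Lemma~\ref{lem:relations_from_generators_induction} fails and the Hilbert-series check you propose would not close. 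Finally, for (d) no small-case arithmetic is needed: condition \ref{custom:Ad-iii} is quantified over all $k \geq 0$ precisely so that it is preserved verbatim when each $e_i'$ with $i \in J$ is incremented by $2$.
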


\sssec*{Idea of Proof:}
The construction of the $y_{i, e_i}$ in Part (a) uses Riemann--Roch
and condition ~\ref{custom:Ad-iii}. The inequality in Part (a)
follows from the fact that $\halfcan$ is similar to $\halfcan'$,
but with some of the $e_i$ incremented by $2$. Part (b) follows
because the sub-lattice spanned by $y_{i, e_i}$ and $y_{i, e_i'}$
has determinant $1$ for $i \in J$, so the generators in Part (a)
generate all of $R_\halfcan$ over $R_{\halfcan'}$. To check part (c),
we construct relations between the generators of $R_{\halfcan}$
over $R_{\halfcan'}$: we note that an element lies in $R_{\halfcan'}$
if and only if its pole order at $P_i$ for $i \in J$ is not too
large, and use Lemma ~\ref{lem:admissible_inequality} to bound pole
orders. Part (d) follows fairly easily from the definition of
admissibility.

\begin{proof}
We prove this in case (1). Case (2) follows a similar procedure.

{\bf Part (a):} 
By Definition ~\ref{defn:admissible}, for all $i\in J$

\begin{align*}
	S(i,0) = \{j \in J : j \neq i \text{ and }e_j' \mid e_i-e_j'\} = \{j \in J : j \neq i \text{ and }e_j' \mid e_i\}.
\end{align*}

\noindent
Define
\begin{align*}
	E_i = \sum_{j \in S(i,0)}^{}Q_j.
\end{align*}

\noindent
The assumption \ref{custom:Ad-iii} implies
\begin{align*}
	\deg \left( e_i L' - E_i \right) \geq \max(2g - 1,0),
\end{align*}

\noindent
and so $H^0(\sx', e_iL'-E_i + Q_i)$ is base point free by 
Riemann--Roch.
Hence, a general element
\begin{align*}
	y_{i, e_i} \in H^0(\sx', e_iL'-E_i + Q_i)
\end{align*}

\noindent
satisfies
\begin{align*}
	-\ord_{Q_i}
^{\halfcan_X'}(y_{i, e_i}) = \left\lfloor e_i \subhalf {e_i'} \right\rfloor + 1 =
	\frac{e_i - 1}{2}.
\end{align*}

Noting that
\begin{align*}
	\lfloor e_i L' \rfloor + Q_i \leq \lfloor e_i L \rfloor,
\end{align*}

\noindent
we obtain an inclusion
\begin{align*}
	H^0(\sx', e_iL'-E_i + Q_i) \rightarrow H^0(\sx, e_iL - E_i) \subseteq 
H^0(\sx, e_iL)
\end{align*}

\noindent
meaning that $y_{i,e_i}\in H^0(\sx,e_iL)$ satisfies the first part of claim (a).

We next show $y_{i, e_i}$ also satisfies the second part of the
claim of $(a),$ by considering separately the cases in which $j
\in S(i, 0),$ and $j \notin S(i,0)$.

If $j \in S(i,0)$, then $E_i \geq Q_j$ gives $y_{i, e_i} \in H^0
(\sx', e_iL'-E_i + Q_i)$ an extra vanishing condition at $Q_j$, so
\begin{align*}
	-\ord_{Q_j}
^{\halfcan_X'}(y_{i, e_i}) \leq e_i\subhalf {e_j'} - 1 \leq e_i 
	\subhalf{e_j'} - \frac{1}{e_j'}.
\end{align*}

If instead $j\ne i$ and $j \notin S(i,0)$, then since $e_j' \nmid e_i$, we know
$e_i\subhalf{e_j'} \notin \BZ$, so
\begin{align*}
	-\ord_{Q_j}
^{\halfcan_X'}(y_{i, e_i}) \leq \left\lfloor  e_i\subhalf{e_j'} \right\rfloor 
	\leq e_i\subhalf{e_j'} - \frac{1}{e_j'},
\end{align*}

\noindent
completing the proof of (a).

{\bf Part (b):}
Define $R_0 = R'$ and for $i \in \{1, \ldots, r\},$ inductively define 
$$R_i = \begin{cases}
	R_{i - 1} &\text{ if }i \notin J\\
	R_{i - 1}[y_{i, e_i}] &\text{ if }i \in J. 
\end{cases}$$

\noindent
To prove (b), it suffices to show that elements of the form $y_{i, e_
i'}^ay_{i, e_i}^b$ with $a \geq 0, b > 0$ form a $\Bk$-basis for $R_{i}$ over $R_{i-1}$. These elements do not lie in $R_{i - 1}$ because the pole order of $y_{i, e_i'}^ay_{i,e_i}^b$ at $Q_i$ is larger than that of any element in the $k^{th}$ component of $R_{i - 1}$. 
Additionally, these elements are linearly independent amongst themselves because of
injectivity of the linear map

\begin{align*}
	(a,b) \mapsto \left( \deg\left(y_{i, e_i'}^ay_{i, e_i}^b\right),-
	\ord_{Q_i}
^{\halfcan_X'}\left( y_{i, e_i'}^ay_{i, e_i}^b \right)  \right) = (a,b) 
	\begin{pmatrix}
		e_i -2 & \frac{e_i -3}{2} \\
		e_i	 & \frac{e_i - 1}{2}
	\end{pmatrix}.
\end{align*}

\noindent
Furthermore, $\{y_{i, e_i'}^a y_{i, e_i}^b:a \geq 0,
b > 0\}$ span $R_i$ over $R_{i - 1}$, because the set of
integer lattice points in the cone generated by the vectors $\left(e
_i -2, \frac{e_i -3}{2} \right)$ and $\left(e_i, \frac{e_i - 1}{2}
\right)$ is saturated, because the corresponding determinant is
\begin{align*}
	(e_i -2) \frac{e_i - 1}{2} - e_i \frac{e_i -3}{2} = 1.
\end{align*}
This completes part (b).

{\bf Part (c):}
To show (c), we wish to show that $y_{i, e_i}z \in R'$ for all generators $z$ of $R'$
with $z \neq y_{i, e_i} \text{ and } z \neq y_{i, e_i'}$. By definition of $H^0(\sx,\halfcan')$, note that $f \in R$ further satisfies $f \in R'$ if and
only if for all $j \in J$ we have
\begin{align}
\label{eqn:order-degree}
	-\ord_{Q_j}
^{\halfcan_X'}(f) \leq \deg (f) \left( \subhalf {e_j'} \right).
\end{align}

\noindent
Now fix $i \in J$. We check that Inequality ~\eqref{eqn:order-degree} holds with $f = y_{i, e_i}z$, implying $y_{i,e_i}z \in R'$ in the three following cases:

\vskip.1in
\noindent
{\sf Case 1: $j \notin \{i \} \cup S(i,0).$}

Here, $L|_{Q_j} = L'|_{Q_j},$ so
\begin{align*}
	-\ord_{Q_j}
^{\halfcan_X'}(y_{i, e_i})-\ord_{Q_j}
^{\halfcan_X'}(z) \leq e_i \subhalf {e_j'} +
	\deg (z) \subhalf {e_j'} = \deg (y_{i, e_i}z) \subhalf{e_j'}. 
\end{align*}

\vskip.1in
\noindent
{\sf Case 2: $j =i.$}

By part (a), condition \ref{custom:Ad-ii}, and Lemma
~\ref{lem:admissible_inequality} we have
\begin{align*}
	-\ord_{Q_j}
^{\halfcan_X'}(y_{i, e_i})-\ord_{Q_j}
^{\halfcan_X'}(z)
	&\leq \frac{e_j - 1}{2} + \deg (z) \left( \subhalf{e_j'} \right) -\frac{1}{e_j'} \\
	& = \frac{e_j - 1}{2} -\frac{1}{e_j'} +\deg (z) \left( \subhalf{e_j'} \right) \\
	&= \frac{e_j(e_j -3)}{2(e_j -2)} +\deg (z) \left( \subhalf{e_j'} \right) \\
	&= \deg (y_{i, e_i}z) \subhalf{e_j'}.
\end{align*}

\vskip.1in
\noindent
{\sf Case 3: $j \in S(i,0).$}

In this case, we may first assume $z \neq y_{j, e_j}$, 
as this is covered by case 2, with $i$ and $j$ reversed. 
Hence, 
\begin{align*}
	-\ord_{Q_j}^{\halfcan_X'}(z) \leq \deg z \subhalf{e_j'},
\end{align*}
implying
\begin{align*}
	-\ord_{Q_j}^{\halfcan_X'}(y_{i, e_i})-\ord_{Q_j}^{\halfcan_X'}(z) 
	&\leq  e_i \subhalf{e_j'} + \deg z \subhalf{e_j'} 
	= \deg (y_{i, e_i}z) \subhalf{e_j'},
\end{align*}
completing part (c).

{\bf Part (d):}
To check (d), we show \ref{custom:Ad-i}, \ref{custom:Ad-ii}, and \ref{custom:Ad-iii} are satisfied. We know \ref{custom:Ad-i} holds by part (b), taking the $y_{
i, e_i}$ as the generators in degree $e_i$. Next, \ref{custom:Ad-ii} is
strictly monotonic in the $e_i$ and hence also holds for $(\sx, J)$.
Finally, if \ref{custom:Ad-iii} holds for $e$ then it holds for $e + 2$ by
definition. This is where we use that \ref{custom:Ad-iii} holds for $k > 0$
and not just for $k = 0$.
\end{proof}

\begin{cor}
\label{cor:raise-stacky-order}
Suppose $(\sx', \halfcan',J')$ is admissible with signature $\sigma' = (e_1
', \ldots,e_r')$ or $\sigma$ satisfies one of the conditions of
Remark ~\ref{rem:three-cases}. Let $J' = \{t,t+1, \ldots,r\}$ and
$e_1' \leq e_2' \leq \cdots \leq e_t' = e_{t+1}' =\cdots = e_r'$,
so that $(\sx', \Delta, L)$ satisfies the conditions of Lemma
~\ref{lem:raise-stacky-order} and Theorem ~\ref{thm:main}. Then,
for any spin curve $(\sx, \Delta, L)$ so that $\sx$ and $\sx'$ have
the same coarse space $X = X'$ with the same set of stacky points,
and $  \sx$ has signature $(g; e_1, \ldots, e_r; \delta)$ with $e_1
\leq e_2 \leq \cdots e_r$ so that
\begin{align*}
	& e_i	= e_i' &\text{ if }i \notin J \\
	& e_i \ge e_i' &\text{ if } i \in J
\end{align*}

\noindent
then Theorem ~\ref{thm:main} holds for $(\sx, \Delta, \halfcan)$.
\end{cor}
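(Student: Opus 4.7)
The plan is to iterate Lemma \ref{lem:raise-stacky-order}, incrementing the stabilizer orders indexed by $J'$ by two at a time until the full signature $\sigma$ is reached. Set $N = \max_{i \in J'}(e_i - e_i')/2$ (an integer, since $e_i, e_i'$ are both odd by Remark \ref{rem:odd-denom}). For $k = 0, 1, \ldots, N$, let $\sigma^{(k)} = (g; e_1^{(k)}, \ldots, e_r^{(k)}; \delta)$ be defined by $e_i^{(k)} = e_i'$ for $i \notin J'$ and $e_i^{(k)} = \min(e_i, e_i' + 2k)$ for $i \in J'$, so $\sigma^{(0)} = \sigma'$ and $\sigma^{(N)} = \sigma$. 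Let $(\sx^{(k)}, \Delta, \halfcan^{(k)})$ be a log spin curve with coarse space $X$, the same stacky points as $\sx$, and signature $\sigma^{(k)}$. Let $J^{(k)} = \{i \in J' : e_i^{(k)} < e_i\}$, so $J^{(0)} \supseteq J^{(1)} \supseteq \cdots \supseteq J^{(N)} = \emptyset$, and $e_i^{(k+1)} = e_i^{(k)} + 2\chi_{J^{(k)}}(i)$.

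I will induct on $k$, showing simultaneously that Theorem \ref{thm:main} holds for $(\sx^{(k)}, \Delta, \halfcan^{(k)})$ and (for $k < N$) that either $(\sx^{(k)}, \halfcan^{(k)}, J^{(k)})$ is admissible or $k = 0$ and the case 2 hypothesis of Remark \ref{rem:three-cases} applies. The base case is exactly the hypothesis of the corollary, using Lemma \ref{lem:admissible_subset} to restrict from $J'$ to $J^{(0)} \subseteq J'$ in the admissible case. For the inductive step, apply Lemma \ref{lem:raise-stacky-order} with subset $J^{(k)}$: part (b) yields new generators $y_{i, e_i^{(k+1)}}$ for $i \in J^{(k)}$, each in degree $e_i^{(k+1)} \leq e_i \leq e$; part (c) adjoins only quadratic initial terms of the forms $y_{j, e_j^{(k+1)}} x_i$, $y_{j, e_j^{(k+1)}} y_{i, e_i^{(k)}}$, and $y_{j, e_j^{(k+1)}} y_{i, e_i^{(k+1)}}$, all of degree at most $2e$; and part (d) combined with Lemma \ref{lem:admissible_subset} (applied to the inclusion $J^{(k+1)} \subseteq J^{(k)}$) delivers admissibility of $(\sx^{(k+1)}, \halfcan^{(k+1)}, J^{(k+1)})$, ready to serve as input for the next iteration.

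Iterating up to $k = N$ establishes Theorem \ref{thm:main} for $(\sx, \Delta, \halfcan)$, since the final ring's generators (respectively relations) decompose into those of $R^{(0)}$ (bounded in degree by $e$ and $2e$ by hypothesis) together with those produced at each of the $N$ stages (also bounded by $e$ and $2e$, as just shown). The main technical hurdle would have been preserving admissibility through each iteration, but this is precisely what makes Lemma \ref{lem:raise-stacky-order} \emph{self-feeding}: part (d) guarantees admissibility of the output, which then serves as the hypothesis for the next application. The only remaining subtlety is that we must restrict to a smaller index set once some $i \in J^{(k)}$ reaches its target value $e_i$, which is handled cleanly by Lemma \ref{lem:admissible_subset}.
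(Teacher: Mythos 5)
Your proposal is correct and follows essentially the same route as the paper: iterate Lemma~\ref{lem:raise-stacky-order} two units at a time, use part (d) to propagate admissibility, and invoke Lemma~\ref{lem:admissible_subset} to shrink the index set once a stabilizer order reaches its target. The only difference is bookkeeping (a single induction on a global step counter $k$ with $J^{(k)}$ recomputed each step, versus the paper's outer induction on which index is being finalized with an inner repetition of $\tfrac{e_{i+1}-e_i}{2}$ applications); the intermediate signatures and index sets are identical.
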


\begin{proof}
For $t \leq i \leq r$, let $(\sx_i, \Delta, \halfcan_i)$ be the log spin curve with coarse space $X_i$ so that $X_i = X',$ with the same stacky points as $(\sx', \Delta, \halfcan),$ and having signature $(g; e_1, \ldots. e_{i - 1}, e_i, e_i, \ldots, e_i; \delta).$ Let 
$J_i = \{i, \ldots, r\}.$ Note that $(\sx_0, \halfcan_0,J_0) = (\sx', \halfcan',J')$ and $(\sx_r , \halfcan_r ,J_r) = (\sx, L, \{r\})$.

Let $(*_i)$ denote the condition that $(\sx_i, \Delta, L_i)$
satisfies the conditions of Lemma ~\ref{lem:raise-stacky-order} and
Theorem ~\ref{thm:main}, and $(\sx_i, \halfcan_i, J_i)$ is
admissible. Since $(*_0)$ holds by assumption, it suffices to show
that if $(*_i)$ holds then so does $(*_{i + 1})$. Indeed, $(\sx_i,
\halfcan_i, J_{i + 1})$ is admissible by an application of Lemma
~\ref{lem:admissible_subset} and the fact that $J_{i + 1} \subseteq J
_i$. Then, applying Lemma ~\ref{lem:raise-stacky-order} with the
fixed set $J_{i + 1}$ repeatedly ($\frac{e_{i + 1} - e_i}{2}$ many
times) yields $(*_{i + 1})$.
\end{proof}

%%%%%%%%%%%%%%%%%%%%%%%%%%%% Genus $\geq 2$ %%%%%%%%%%%%%%%%%%%%%%%%%%%%%%%

\section{Genus At Least Two}
\label{sec:g-high}
We now consider the case when the genus is at least 2. In this case, we are able to bound the degrees of generators of $R_\halfcan$ and its ideal of relations. In this section, we do not obtain explicit presentations of $R_\halfcan$. This contrasts with Sections ~\ref{sec:g-1} and ~\ref{sec:g-0} where we not only obtain bounds, but also obtain inductive presentations. The tradeoff is that in the genus zero and genus one cases, we have to deal with explicit base cases. In this section we apply general results.

\ssec{Bounds on Generators and Relations in Genus At Least Two}
\label{ssec:bounds-high-genus}

The main result of this subsection is that for a log spin curve with no stacky points $(X, \Delta, L)$, the spin canonical ring $R_\halfcan$ is generated in degree at most $5$, with relations in degree at most $10$. The case that $\Delta = 0$ was completed by Reid \cite[Theorem 3.4]{reid:infinitesimal}. For $\Delta > 0$, the generation bound is shown in Lemma ~\ref{lem:generation-5} and the relations bound is shown in Lemma ~\ref{prop:relation-10}. Throughout this subsection, we will implicitly use Remark ~\ref{rem:delta-not-1}, which implies $\deg 2L \geq 2g$ so $H^0(X, 2L)$ is basepoint-free by Riemann--Roch. We summarize the results of this subsection in Corollary ~\ref{cor:g-2-presentation-bound}.

The proofs of this subsection results are similar to those in Neves ~\cite[Proposition III.4 and Proposition III.12]{neves:halfcan}. However, the statements differ, as we assume $\Delta > 0$ instead of $\Delta = 0$ and do not assume there is a basepoint-free pencil contained in $H^0(X, L).$ 

\begin{lem}
\label{lem:generation-5}
Let $(X, \Delta, L)$ be a log spin curve of signature $(g;-;\delta),$ {\rm(}where $-$ means $X$ is a bona fide scheme and has no stacky points,{\rm)}
with $g \geq 2$ and $\Delta > 0.$ Let $s_1$, $s_2 \in H^0(X, 2L)$
be two independent sections such that the vector subspace $V =
\newspan(s_1, s_2) \subseteq H^0(K)$ is basepoint-free. Then, the map
\begin{align*}
	V \otimes H^0(nL) \rightarrow H^0((n+2)L)
\end{align*}
is surjective if $n \geq 4$. In particular, $R_\halfcan$ is generated in degree at most $5.$
\end{lem}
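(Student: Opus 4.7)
The plan is to apply the classical base-point-free pencil trick with the two-dimensional subspace $V \subseteq H^0(X,2L)$. Since $V$ is basepoint-free, the evaluation map $V \otimes_\Bbbk \sco_X \twoheadrightarrow \sco_X(2L)$ is surjective, and by comparing determinants its kernel is the line bundle $\sco_X(-2L)$. Tensoring the resulting short exact sequence
\[
0 \to \sco_X(-2L) \to V \otimes_\Bbbk \sco_X \to \sco_X(2L) \to 0
\]
with $\sco_X(nL)$ yields
\[
0 \to \sco_X((n-2)L) \to V \otimes_\Bbbk \sco_X(nL) \to \sco_X((n+2)L) \to 0.
\]

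Taking the long exact sequence in cohomology, surjectivity of $V \otimes H^0(nL) \to H^0((n+2)L)$ reduces to the vanishing $H^1(X, (n-2)L) = 0$. By Serre duality this equals $H^0(X, K - (n-2)L)^*$, so the problem becomes a degree computation. Using $2L \sim K + \Delta$, we have $\deg L = (2g-2+\delta)/2$, so
\[
\deg(K - (n-2)L) = (2g-2) - (n-2)(g - 1 + \delta/2),
\]
which for $n=4$ equals $-\delta<0$ (using $\Delta>0$), and is only more negative for $n\geq 5$. Hence the $H^1$ vanishes and the multiplication map is surjective for all $n\geq 4$.

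For the ``in particular'' conclusion, I would induct on $k$. For $k \leq 5$, the claim that $H^0(kL)$ lies in the subalgebra of $R_L$ generated by elements of degree at most $5$ is trivial. For $k \geq 6$, apply the surjectivity just proved with $n = k-2 \geq 4$: every element of $H^0(kL)$ is a $\Bbbk$-linear combination of products $s_i \cdot t$ with $i\in\{1,2\}$ and $t \in H^0((k-2)L)$. Since $s_1, s_2 \in H^0(2L)$ have degree $2 \leq 5$, and by the inductive hypothesis $H^0((k-2)L)$ already lies in the subalgebra generated by degrees $\leq 5$, the same holds for $H^0(kL)$. The main content lies in the base-point-free pencil trick and the degree bookkeeping; the only place $\Delta>0$ is genuinely used is to push the vanishing through at $n=4$, which is exactly why the bound is $5$ (rather than larger as in the $\Delta=0$ case handled separately by Reid).
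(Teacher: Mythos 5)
Your proof is correct and follows essentially the same route as the paper: both apply the base-point-free pencil trick to the pencil $V \subseteq H^0(X,2L)$ and then use Riemann--Roch together with $\delta > 0$ to handle the borderline case $n=4$. The only cosmetic difference is that you phrase the final step as the vanishing $H^1(X,(n-2)L)=0$ via Serre duality, whereas the paper equivalently counts dimensions of the kernel and image of the multiplication map; your spelled-out induction for the ``in particular'' clause is also fine.
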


\begin{proof}
To show there are no new generators in degree at least $6$, it
suffices to show that if $n \geq 4$, the map
\begin{align*}
	H^0(nL) \otimes H^0(2L) \rightarrow H^0((n+2)L)
\end{align*}
is surjective. Indeed, since $V = \newspan(s_1, s_2) \subseteq
H^0(K)$ is basepoint-free, by the basepoint-free pencil trick (see
\cite[Lemma 2.6]{saint-donat:proj} for a proof), we obtain an exact
sequence
\begin{align*}
0 \longrightarrow H^0((n - 2)\halfcan) \longrightarrow V \otimes
H^0(n \halfcan) \overset{f}{\longrightarrow} H^0((n + 2)\halfcan)
\end{align*}

We wish to show $f$ is surjective.
Note that $\dim_\Bk \ker f = \dim_\Bk H^0((n - 2)L) = (n - 3)(g - 1+\frac{\delta}{2})$ using Riemann--Roch and the assumption $n \geq 4.$
Additionally, $\dim_\Bk V \otimes H^0(nL) = 2 \cdot (n - 1)(g - 1+\frac{\delta}{2})$, again using Riemann--Roch.
Therefore,
\begin{align*}
	\dim_\Bk \im f 	& = 2 \cdot (n - 1)(g - 1+ \frac{\delta}{2}) - (n - 3)(g - 1 + \frac{\delta}{2})\\
				& = (n + 1)(g - 1 + \frac{\delta}{2}) = \dim_\Bk H^0((n + 2)L).
\end{align*}
Ergo, $f$ is surjective.
\end{proof}

The next step is to bound the degrees of the relations of $R_\halfcan$ when $\Delta > 0$. This is done in Proposition ~\ref{prop:relation-10}
by using the basepoint-free pencil trick to show that if a relation lies in a sufficiently high degree, it lies in the ideal generated by the relations in lower degrees.  In Definition ~\ref{defn:lower-ideal}, we fix notation for the ideal generated by lower degrees relations:

\begin{defn}
\label{defn:lower-ideal}
Let $(X, \Delta, L)$ be a log spin curve of signature $(g; -; \delta
)$ with $g \geq 2$ and $\Delta > 0.$ Choose generators $x_1, \ldots
, x_n$ of $R_\halfcan$ so that we obtain a surjection $\phi: \Bk[x_1, \ldots
, x_n] \twoheadrightarrow R_\halfcan$ with kernel $I_\halfcan$. Let $I_{\halfcan, k}$ be
the $k^\text{th}$ graded piece of $I_\halfcan$ and define
\[
	J_{\halfcan,k} = \sum_{j = 1}^{k - 1}\Bk[x_1, \ldots, x_n]_j \cdot I_{\halfcan,k-j}.
\]
\end{defn}

\begin{lem}
\label{lem:reducing-degree}
Let $(X, \Delta, \halfcan)$ be a log spin curve of genus $g \geq 2,$ so
that $\Delta > 0.$ Choose generators $x_1, \ldots, x_n$ of $R_\halfcan$ so
that we obtain a surjection $\phi: \Bk[x_1, \ldots, x_n]
\twoheadrightarrow R_\halfcan$ with kernel $I_L$. Let $s_1, s_2 \in \Bk[x_1,
\ldots, x_n]_2$ be two elements so that $\: \newspan(\phi(s_1), \phi(s_2))$
$= V \subseteq H^0(X, 2\halfcan)$ is basepoint-free. For any $f \in
\Bk[x_1, \ldots, x_n]$ such that $\deg f \geq 11,$ there exist $g, h \in
\Bk[x_1, \ldots, x_n]_{k - 2}$ so that $s_1 g + s_2 h \equiv f \bmod J_{\halfcan, k}.$
\end{lem}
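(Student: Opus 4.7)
The plan is to reduce to the case of a single monomial by linearity, and then to use the basepoint-free pencil trick on a carefully chosen sub-product. Since Lemma~\ref{lem:generation-5} provides generators for $R_\halfcan$ in degree at most $5$, I would write $f = x_{i_1} \cdots x_{i_t}$ with $\deg x_{i_j} \le 5$ for every $j$. Forming the partial products $P_\ell := x_{i_1} \cdots x_{i_\ell}$, I would choose the least $\ell$ with $\deg P_\ell \ge 6$ and set $m'' := P_\ell$, $m' := f/m''$. Because each factor contributes at most $5$ to the degree, we automatically have $6 \le \deg m'' \le 10$, and because $\deg f \ge 11$ we have $\deg m' = k - \deg m'' \ge 1$.

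Letting $d := \deg m''$, I would next invoke the basepoint-free pencil trick of Saint-Donat \cite[Lemma 2.6]{saint-donat:proj}: the Koszul complex
\begin{equation*}
0 \to \sco(-2\halfcan) \xrightarrow{(-s_2,\,s_1)} \sco^{\oplus 2} \xrightarrow{(s_1,\,s_2)} \sco(2\halfcan) \to 0,
\end{equation*}
twisted by $\sco((d-2)\halfcan)$, yields after taking cohomology the surjection $V \otimes H^0((d-2)\halfcan) \twoheadrightarrow H^0(d\halfcan)$, provided $H^1((d-4)\halfcan) = 0$. This vanishing holds because $\deg(d-4)\halfcan \ge 2\deg\halfcan = 2g-2+\delta > 2g-2$, using $d \ge 6$ together with the hypothesis $\Delta > 0$. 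Since $\phi$ is surjective in each graded piece, lifting the resulting decomposition of $\phi(m'')$ yields $g'', h'' \in \Bk[x_1,\ldots,x_n]_{d-2}$ with $r'' := m'' - s_1 g'' - s_2 h'' \in I_{\halfcan, d}$ a relation of degree $d \le 10$.

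Finally, multiplying the identity $m'' = s_1 g'' + s_2 h'' + r''$ by $m'$ gives
\begin{equation*}
f \;=\; s_1 (m' g'') + s_2 (m' h'') + m' r''.
\end{equation*}
Taking $g := m' g''$ and $h := m' h''$, both of degree $k-2$, we obtain $f - s_1 g - s_2 h = m' r'' \in \Bk[x_1,\ldots,x_n]_{k-d} \cdot I_{\halfcan, d}$, which is one of the summands defining $J_{\halfcan, k}$ since $1 \le k - d$ and $2 \le d \le k - 1$. The only real obstacle is ensuring the existence of the combinatorial splitting $f = m' \cdot m''$ with $\deg m''$ in the narrow window $[6, 10]$: this window being nonempty and containing some partial-product degree requires exactly the lower bound $k \ge 11$ together with the generator bound from Lemma~\ref{lem:generation-5}, which is why the statement fails for smaller $k$.
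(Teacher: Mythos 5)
Your proof is correct, and it rests on the same two pillars as the paper's: the degree-$5$ generation bound of Lemma~\ref{lem:generation-5} and the surjectivity of $V \otimes H^0(n\halfcan) \to H^0((n+2)\halfcan)$ coming from the basepoint-free pencil trick. The only real difference is which factor of $f$ you feed into that surjection. The paper writes an arbitrary homogeneous $f$ as $\sum_i a_i x_i$, applies the surjection to each large cofactor $\phi(a_i)$ (of degree $k - \deg x_i \ge 6$), and multiplies back by the single generator $x_i$; you instead reduce to a monomial, peel off a sub-product $m''$ of degree pinned in $[6,10]$, apply the surjection to $\phi(m'')$, and multiply back by the large complement $m'$. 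Both decompositions need exactly $k \ge 11$: in the paper's version so that $\deg a_i - 2 \ge 4$, in yours so that $\deg m' = k - d \ge 1$ and hence $m'r''$ lands in a genuine summand $\Bk[x_1,\ldots,x_n]_{k-d}\cdot I_{\halfcan,d}$ of $J_{\halfcan,k}$ rather than in $I_{\halfcan,k}$ itself. (Your closing remark slightly misattributes the role of $k\ge 11$: the window $[6,10]$ contains a partial-product degree as soon as $k\ge 6$; what $k\ge 11$ buys is $d \le k-1$.) A small virtue of your variant is that every relation $r''$ it invokes has degree at most $10$, making the conclusion of Proposition~\ref{prop:relation-10} especially transparent; a small cost is the extra (harmless) reduction to monomials, whereas the paper's decomposition handles general homogeneous $f$ in one stroke.
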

\begin{proof}
By Lemma ~\ref{lem:generation-5}, $\deg x_i \leq 5$ for $1 \leq i \leq n$. Therefore, we may write $f = \sum_{i = 1}^{n}a_i x_i$ with $a_i \in \Bk[x_1, \ldots, x_n]_{k-\deg x_i}$.
We next show that for all $1 \leq i \leq n$ there exist $g_i, h_i \in \Bk[x_1, \ldots, x_n]_{k - \deg x_i - 2}$ so that $a_i = s_1g_i + s_2h_i \bmod I_{\halfcan, \deg a_i}.$ 

By Lemma ~\ref{lem:generation-5},
\begin{align*}
	V \otimes H^0((\deg f-\deg x_i -2)\halfcan) \rightarrow H^0((\deg f-\deg x_i)\halfcan)
\end{align*}
is surjective because $\deg f \geq 11$ implies that
$$\deg a_i =\deg f - \deg x_i -2 \geq 4.$$
In particular, there exist $\alpha_i, \beta_i \in R_\halfcan$ so that
$$\phi(a_i) = \phi(s_1) \cdot \alpha_i + \phi(s_2) \cdot \beta_i.$$ 
Choosing $g_i,h_i\in \Bk[x_1, \ldots, x_n]_{\deg(a_i)-2}$ for $1 \leq i \leq n$ so that $\phi(g_i) = \alpha_i, \phi(h_i) = \beta_i$, we have 
$$a_i \equiv s_1 g_i + s_2 h_i \bmod I_{\halfcan, \deg a_i},$$ as claimed.

Finally, we may then take $g = \sum_{i}^{}g_i x_i, \: h = \sum_{i}^{}h_i x_i,$ so that 
\begin{align*}
	f &\equiv \sum_{i}^{}a_i x_i \equiv \sum_{i}^{}(s_1g_i + s_2h_i)x_i \equiv s_1 \left( \sum_{i}^{}g_i x_i \right) + s_2 \left( \sum_{i}^{}h_i x_i \right) \\
	&\equiv s_1 g + s_2 h \bmod J_{L,k}.
\end{align*}
\end{proof}

\begin{prop}
\label{prop:relation-10}
Let $(X, \Delta, \halfcan)$ be a log spin curve of signature $(g; -; \delta)$
 with $g \geq 2$ and $\Delta > 0.$ Then $I_\halfcan$ is generated in
degree at most $10$.
\end{prop}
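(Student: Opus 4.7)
The plan is to show that every homogeneous relation $f \in I_{\halfcan, k}$ with $k \geq 11$ already lies in $J_{\halfcan, k}$, so no relation generators are needed beyond degree $10$. Since $\deg \Delta$ is even (Remark ~\ref{rem:delta-not-1}) and positive, $\deg 2\halfcan = 2g - 2 + \delta \geq 2g$; thus by Riemann--Roch $|2\halfcan|$ is basepoint-free and admits a basepoint-free pencil $V \subseteq H^0(X, 2\halfcan)$. Surjectivity of $\phi$ in degree $2$ lets us pick lifts $s_1, s_2 \in \Bk[x_1, \ldots, x_n]_2$ whose images generate $V$.

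Given $f \in I_{\halfcan, k}$ with $k \geq 11$, Lemma ~\ref{lem:reducing-degree} produces $g, h \in \Bk[x_1, \ldots, x_n]_{k-2}$ with $f \equiv s_1 g + s_2 h \pmod{J_{\halfcan, k}}$, so it suffices to prove $s_1 g + s_2 h \in J_{\halfcan, k}$. Because $\phi(f) = 0$, the pair $(\phi(g), \phi(h))$ lies in the kernel of the multiplication map $V \otimes H^0((k-2)\halfcan) \to H^0(k\halfcan)$ sending $(a, b) \mapsto s_1 a + s_2 b$.

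To identify this kernel, apply the basepoint-free pencil trick to the Koszul sequence on $X$:
\begin{align*}
0 \longrightarrow \sco((k-4)\halfcan) \longrightarrow V \otimes \sco((k-2)\halfcan) \longrightarrow \sco(k\halfcan) \longrightarrow 0,
\end{align*}
in which the first map sends $t \mapsto (s_2 t, -s_1 t)$. Taking global sections, exactness at the middle yields some $t \in H^0((k-4)\halfcan)$ with $\phi(g) = \phi(s_2) t$ and $\phi(h) = -\phi(s_1) t$. Lifting $t$ to any $T \in \Bk[x_1, \ldots, x_n]_{k-4}$ via the surjective map $\phi$, the polynomials $g - s_2 T$ and $h + s_1 T$ both lie in $I_{\halfcan, k-2}$, so
\begin{align*}
s_1 g + s_2 h \;=\; s_1 (g - s_2 T) + s_2 (h + s_1 T) \;\in\; \Bk[x_1, \ldots, x_n]_2 \cdot I_{\halfcan, k-2} \;\subseteq\; J_{\halfcan, k}.
\end{align*}

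The main technical point is the bookkeeping in this last step: the pencil trick yields an equation in $R_\halfcan$, so one must carefully lift $t$ to a polynomial $T$ and verify that the resulting errors $g - s_2 T$ and $h + s_1 T$ remain in $I_\halfcan$ of the correct degree, in order to conclude the product actually lands in $J_{\halfcan, k}$ and not merely in $I_{\halfcan, k}$. The hypothesis $\Delta > 0$ is used crucially to guarantee the existence of the basepoint-free pencil required by both Lemma ~\ref{lem:reducing-degree} and the Koszul sequence above.
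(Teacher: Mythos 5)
Your proposal is correct and follows essentially the same route as the paper: reduce via Lemma~\ref{lem:reducing-degree} to showing $s_1 g + s_2 h \in J_{\halfcan,k}$, then use left-exactness in the basepoint-free pencil trick to produce $t \in H^0((k-4)\halfcan)$ with $\phi(g)=\phi(s_2)t$ and $\phi(h)=-\phi(s_1)t$, and conclude by the cancellation $s_1(s_2 T) + s_2(-s_1 T)=0$. Your explicit lifting of $t$ to $T$ and the verification that the errors land in $I_{\halfcan,k-2}$ is exactly the bookkeeping the paper records as $g \equiv s_2\rho$, $h \equiv -s_1\rho \bmod I_{\halfcan,k-2}$.
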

\begin{proof}
Suppose $f \in I_\halfcan$ with $\deg f \geq 11$. To complete the proof,
it suffices to show $f \in J_{\halfcan,k}$. By Lemma
~\ref{lem:reducing-degree}, this is the same as checking $s_1 g + s_2 h \in
J_{\halfcan, k}$ where $\phi(s_ 1), \phi(s_2) \in H^0(X, 2L)$ are two sections
so that $\newspan(\phi(s_ 1), \phi(s_2)) = V \subseteq H^0(K)$ is
basepoint-free. Consider the map

$$\begin{tikzcd}
V \otimes H^0((\deg f - 2)L) \ar {r}{f} & H^0((\deg f)L),
\end{tikzcd}$$

\noindent
we know that $\phi(s_1)\phi(g) + \phi(s_2) \phi(h) \mapsto 0$.
So by the explicit isomorphism given in the proof of the
basepoint-free pencil trick, as shown in the proof of \cite[Lemma 2.6]
{saint-donat:proj}, there exists some $\rho \in \Bk[x_1, \ldots, x_n]$
so that $\phi(\rho) \in H^0((\deg f - 4)L)$ satisfies $\phi(g) =
\phi(s_2) \phi(\rho)$ and $\phi(h) = -\phi(s_1)\phi(\rho)$. 
Therefore,
$g \equiv s_2 \rho \bmod I_{\halfcan, k-2}$ and $h \equiv - s_1 \rho \bmod
I_{\halfcan, k - 2}$. Hence,
\begin{align*}
	s_1g + s_2h \equiv s_1(s_2\rho) + s_2(-s_1 \rho) \equiv 0 \bmod J_
{L,k}.
\end{align*}
\end{proof}

We now summarize what we have shown.

\begin{cor}
\label{cor:g-2-presentation-bound}
Let $(X, \Delta, L)$ be a log spin curve of signature $(g; -;\delta)$
with $g \geq 2$. Then $X$ has minimal generators in degree at most
$5$ and minimal relations in degree at most $10$.
\end{cor}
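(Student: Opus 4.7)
The plan is to dispatch the statement by splitting into the two cases $\Delta = 0$ and $\Delta > 0$, since the subsection has been set up precisely to handle the latter while citing an existing theorem for the former.

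For $\Delta = 0$, the conclusion is the classical theorem of Reid \cite[Theorem 3.4]{reid:infinitesimal}, which is already cited at the start of the subsection and which gives both the generation bound of $5$ and the relation bound of $10$ for the spin canonical ring of a smooth non-stacky curve of genus at least $2$.

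For $\Delta > 0$, the strategy is simply to verify the hypotheses of Lemma \ref{lem:generation-5} and Proposition \ref{prop:relation-10}, which together deliver both bounds. Both of these results require a basepoint-free pencil $V \subseteq H^0(X, 2L)$, so the only sub-step is producing such a $V$. By Remark \ref{rem:delta-not-1}, $\deg \Delta$ is even, and since it is positive we have $\deg \Delta \geq 2$; hence $\deg 2L = 2g - 2 + \deg \Delta \geq 2g$. Riemann--Roch then yields $h^0(X, 2L) = \deg 2L - g + 1 \geq g + 1 \geq 3$ and simultaneously guarantees that $|2L|$ is basepoint-free. A generic two-dimensional subspace $V \subseteq H^0(X, 2L)$ therefore furnishes a basepoint-free pencil, and applying Lemma \ref{lem:generation-5} and Proposition \ref{prop:relation-10} to this $V$ finishes the case.

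There is no genuine obstacle here; this corollary is purely a packaging step that bundles Reid's result with the two new lemmas of the subsection into the single uniform statement that will be referenced in the genus $g \geq 2$ portion of the inductive proof of Theorem \ref{thm:main}.
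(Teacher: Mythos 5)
Your proposal is correct and follows exactly the paper's own route: Reid's theorem for $\Delta = 0$, and Lemma \ref{lem:generation-5} together with Proposition \ref{prop:relation-10} for $\Delta > 0$. The only difference is that you explicitly verify the existence of the basepoint-free pencil $V \subseteq H^0(X, 2L)$ via Remark \ref{rem:delta-not-1} and Riemann--Roch, which the paper does implicitly at the start of the subsection.
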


\begin{proof}
If $\delta = 0$, the result is immediate from Reid \cite[Theorem 3.4]
{reid:infinitesimal}. Otherwise, if $\delta > 0$, the bound on the
degrees of minimal generators follows from Lemma
~\ref{lem:generation-5}, while the bound on the degrees of minimal
relations follows from Proposition ~\ref{prop:relation-10}.
\end{proof}

\ssec{Main Theorem for Genus At Least Two}
\label{ssec:g-high-main}

We are ready to prove our main theorem, Theorem ~\ref{thm:main} in the case
$g \geq 2$. The idea of the proof is to use Corollary \ref{cor:g-2-presentation-bound} to 
complete the base case when $L = \halfcan_X$ and then apply Lemma
~\ref{lem:sat-1}, Lemma ~\ref{lem:sat-2}, and Lemma
~\ref{lem:raise-stacky-order} to complete the induction step.

\begin{thm}
\label{thm:g-high-main}
Let $g \geq 2$ and let $(\sx, \Delta, \halfcan)$ be a log spin curve
with signature $(g; e_1, \ldots, e_r; \delta)$. Then the
log spin canonical ring $R(\sx, \Delta, \halfcan)$
is generated as a $\Bk$-algebra by elements in degree at most $e =
\max(5, e_1, \ldots, e_r)$ with minimal relations in degree at most $2e$.
\end{thm}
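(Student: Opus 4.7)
The plan is to prove Theorem \ref{thm:g-high-main} by induction on the stacky structure, reducing to the coarse space case handled by Corollary \ref{cor:g-2-presentation-bound}. The base case $r = 0$ is immediate: when $L = L_X$, Corollary \ref{cor:g-2-presentation-bound} yields generators in degree at most $5$ and relations in degree at most $10$, which is within the bounds $e \geq 5$ and $2e \geq 10$ since $e = \max(5, e_1, \ldots, e_r)$.

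For the inductive step, I would proceed in two phases. In the first phase, starting from $(X, \Delta, L_X)$, I would add stacky points one at a time, each initially with stabilizer order $3$, producing a log spin curve of signature $(g; 3, 3, \ldots, 3; \delta)$. At each step, going from $L'$ to $L' + \frac{1}{3}P_{j+1}$, I would apply Lemma \ref{lem:sat-2} when $P_{j+1}$ is a base point of $L'$ and Lemma \ref{lem:sat-1} otherwise. In both cases, the resulting new generators lie in degree at most $4$ and the new relations in degree at most $8$, well within $e$ and $2e$. After $r$ applications, the resulting log spin curve matches Case (3) of Remark \ref{rem:three-cases}. In the second phase, I would invoke Corollary \ref{cor:raise-stacky-order} to raise the stabilizer orders from $3$ to their target values $e_1, \ldots, e_r$, using Case (3) of Remark \ref{rem:three-cases} to bypass any explicit admissibility check. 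The new generators $y_{i,e_i}$ sit in degree $e_i \leq e$, while the new relations produced in Lemma \ref{lem:raise-stacky-order}(c) have degree $e_j + \deg z$ with $z$ an earlier generator of degree at most $e$, giving a bound of $2e$.

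The main obstacle in this approach is verifying the hypotheses of Lemmas \ref{lem:sat-1} and \ref{lem:sat-2} at each step of the first phase. Lemma \ref{lem:sat-1} requires a degree-$1$ generator in $R_{L'}$, which is not automatic in genus $g \geq 2$, so one must argue case-by-case according to whether the added point is a base point of the current $L'$ and carefully track which lemma is applicable. Once the first phase is complete, however, the second phase proceeds cleanly, and the final degree bounds fall directly out of the explicit description in Lemma \ref{lem:raise-stacky-order}(c).
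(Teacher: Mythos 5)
Your proposal is correct and follows essentially the same route as the paper's proof: base case $L = \halfcan_X$ via Corollary~\ref{cor:g-2-presentation-bound}, adding the stacky points one at a time with coefficient $\frac{1}{3}$ using Lemma~\ref{lem:sat-2} when the point is a basepoint of $\halfcan'$ and Lemma~\ref{lem:sat-1} otherwise, and then raising stabilizer orders via Corollary~\ref{cor:raise-stacky-order} through Case (3) of Remark~\ref{rem:three-cases}. The hypothesis-checking issue you flag for Lemma~\ref{lem:sat-1} is handled in the paper simply by noting that non-basepointness gives Equation~\eqref{eqn:deg1-sat-ind} by Riemann--Roch.
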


\begin{proof}
As the base case, let $\halfcan = \halfcan_X \in \di X$ satisfy $2 \halfcan \sim 2 K_X + \Delta$.
By Corollary ~\ref{cor:g-2-presentation-bound}, the theorem holds for $(\sx, \Delta,
\halfcan_X)$. 

Next, suppose the theorem holds for $\halfcan' = \halfcan_X + \sum_{i=1}^{r-1} \frac{1}{3}P_i$. Let $\halfcan$ be a log spin canonical divisor of the form $\halfcan = \halfcan_X + \sum_{i = 1}^r \frac{1}{3} P_i$, which means that $\halfcan = \halfcan' + \frac{1}{3} P_r$.  If $P_r$ is a basepoint of $\halfcan'$, then the theorem holds for $\halfcan$ by Lemma ~\ref{lem:sat-2}.
Otherwise, $P_r$ is not a basepoint of $\halfcan'$, meaning that in particular $R_{\halfcan'}$ is saturated in 1.
Therefore, since $P_r$ is a not basepoint of $\halfcan'$, Equation
~\ref{eqn:deg1-sat-ind} holds by Riemann--Roch.
In this case, the theorem holds for $\halfcan$ by Lemma ~\ref{lem:sat-1}.

We have thus shown the theorem for all $(\sx, \Delta, \halfcan)$ with $g \ge 2$ and signature $(g; 3, \ldots, 3; \delta)$. Therefore, by Corollary 
~\ref{cor:raise-stacky-order}, this theorem 
holds for all log spin curves $(\sx, \Delta, \halfcan)$. 
\end{proof}

\begin{rem}
\label{rem:genus-2-explicit-bound}
For this remark, we retain the terminology from the proof of Theorem ~\ref{thm:g-high-main}.
Suppose $e :=  \max(e_1,\ldots, e_r).$ Then, $R_\halfcan$ has a generator in degree $e$ when $e \geq 5$ and a relation in degree at least $2e-4$ when $e \geq 7$. Since the proof of Theorem ~\ref{thm:g-1-main} is given by inductively 
applying Lemmas ~\ref{lem:sat-1} and ~\ref{lem:sat-2}, we obtain that $R_\halfcan$ is minimally generated over $R_{L'}$ by an element in degree $e,$ assuming $e \geq 5$. Furthermore, if $e \geq 7$ and $e_i = e,$ 
then there must be a relation with leading term $y_{i,e_i} \cdot y_{i,e_i-4}.$ Hence, there is a relation in degree at least $2e - 4$. 
Further, by examining the statements of Lemma ~\ref{lem:sat-1} and \ref{lem:sat-2} in the case that there are $1 \leq i < j \leq r$ with $e_i = e_j = e$, then, there is necessarily a relation with leading term $y_{i, e_i} \cdot y_{j, e_j}$ in degree $2e$. This analysis also applies to the cases that $g = 0$ and $g = 1$.
\end{rem}

%%%%%%%%%%%%%%%%%%%%%%%%%%%% Genus one %%%%%%%%%%%%%%%%%%%%%%%%%%%%%%%

\section{Genus One}
\label{sec:g-1}
In this section, we prove Theorem ~\ref{thm:main} in the case that $g = 1$.  We follow a similar inductive strategy as in the genus $g\geq 2$ case, except unlike in the $g \geq 2$ case we obtain explicit generators and relations here.

In the case of a genus 1 curve, $X$ with no stacky points, we know $K_X \sim 0$, and therefore the only possibilities for log spin canonical divisors are $\halfcan' \sim 0$ or $\halfcan' \sim P - Q$ where $P, Q$ are distinct points, fixed  under the hyperelliptic involution. We inductively construct presentations by adding points through Lemmas ~\ref{lem:sat-1} and ~\ref{lem:sat-2} and incrementing the values of the $e_i$'s using Lemma ~\ref{lem:raise-stacky-order}.

\ssec{Genus One Base Cases}
\label{ssec:g-1_base}
In this subsection we set up the base cases needed for our inductive approach, of proving Theorem ~\ref{thm:main} in the case $g = 1$.

\begin{table}	
\begin{tabular}
{| c || c | c | c |}	
	\hline
	$\halfcan'$ & Generator Degrees & Degrees of Minimal Relations & $e$ \\
	\hline
	\hline
	$0$ & $\{1\}$ & $\emptyset$ & $1$ \\	\hline

	$\frac{3}{7} P_1$ & $\{1,5,7\}$ & $\{15\}$ & $7$ \\	\hline
	
	$\frac{1}{3} P_1 + \frac{1}{3} P_2$ & $\{1, 3, 3\}$ & $\{6\}$ & $5$ \\	\hline
	
	$P - Q + \frac{2}{5} P_1$ & $\{2, 3, 5\}$ & $\{12\}$ & $5$ \\	\hline
	
	$P - Q + \frac{1}{3} P_1 + \frac{1}{3}P_2$ & $\{2, 3, 3, 4\}$ & $\{6,8\}$ & $5$ \\	\hline
\end{tabular}	

\caption{Genus 1 Base Cases}
\label{table:g-1-base}
\end{table}

Generators and relations for $R_{L'}$ with $L' = 0, \: L' = P-Q +\frac{1}{3}P_1 + \frac{1}{3}P_2,$ and $L' = P-Q + \frac{2}{5}P_1$ were checked in Examples ~\ref{eg:base-1-0}, \ref{eg:base-1-33}, and \ref{eg:exception-1-5} respectively. Note that admissibility for $(\sx',0,P-Q +\frac{1}{3}P_1 + \frac{1}{3}P_2)$ is verified in Example ~\ref{eg:base-1-33-adm}. The verification of admissibility for the other cases is similar.

The base cases of $L' = \frac{3}{7}P_1$ and $L' = \frac{1}{3}P_1 +\frac{1}{3}P_2$ can be similarly computed. Although $L' = P - Q + \frac{2}{5}P_1,$ and $L' = \frac{3}{7}P_1$ are used as base cases for the induction, they are also exceptional cases; see Table ~\ref{table:g-1-exceptional}.

\ssec{Genus One Exceptional Cases}
\label{ssec:g-1-exceptional}
Let $\sx$ be a stacky curve, with $P, Q$ distinct hyperelliptic
fixed points on $X$. The following table provides a list of all
cases which are not generated in degrees $e := \max(5, e_1, \ldots,
e_r)$ with relations in degrees $2e$, as described in Theorem
~\ref{thm:g-1-main}.

\begin{table}	
\begin{tabular}
{| c || c | c | c |}
	\hline
	$\halfcan'$ & Generator Degrees & Degrees of Minimal Relations & $e$ \\
	\hline
	\hline
	$P - Q$ & $\{2\}$ & $\emptyset$ & $1$\\	\hline

	$P - Q + \frac{1}{3} P_1$ & $\{2, 3, 7\}$ & $\{14\}$ & $5$ \\	\hline

	$P - Q + \frac{2}{5} P_1$ & $\{2, 3, 5\}$ & $\{12\}$ & $5$\\	\hline
	
	$\frac{1}{3} P_1$ & $\{1, 6, 9\}$ & $\{18\}$ & $5$ \\	\hline

	$\frac{2}{5} P_1$ & $\{1, 5, 8\}$ & $\{16\}$ & $5$ \\	\hline
	
	$\frac{3}{7} P_1$ & $\{1, 5, 7\}$ & $\{15\}$ & $7$ \\	\hline
\end{tabular}

\caption{Genus 1 Exceptional Cases}
\label{table:g-1-exceptional}
\end{table}

We have already checked the case of 
$L' = P - Q + \frac{1}{5}P_1$ above in Example ~\ref{eg:exception-1-5}. The other cases are similar.

\ssec{Main Theorem for Genus One}
\label{ssec:main_g_1}

We now have all the tools necessary to prove our main theorem, ~\ref{thm:main} in the case $g = 1$. 

\begin{thm}
\label{thm:g-1-main}
Let $(\sx, \Delta, \halfcan)$ log spin curve with
signature $\sigma := (1; e_1, \ldots, e_r; \delta)$.
If $g = 1$, then the log spin canonical ring $R(\sx, \Delta, \halfcan)$
is generated as a $\Bk$-algebra by elements of degree at most $\max(5, e_1, \ldots, e_r)$ and has relations in degree at most $2e,$ so long as $\sigma$ does not lie in a
finite list of exceptional cases, as listed in Table
~\ref{table:g-1-exceptional}.
\end{thm}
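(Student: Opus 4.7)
The plan is to follow the same three-part scheme as in the proof of Theorem~\ref{thm:g-high-main}, with the difference that the base step is a finite explicit list (Table~\ref{table:g-1-base}) rather than a clean general result like Corollary~\ref{cor:g-2-presentation-bound}, and with the exceptional cases of Table~\ref{table:g-1-exceptional} arising precisely when that finite base cannot be reached by the inductive lemmas.

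First I would fix the base cases. For each triple in Table~\ref{table:g-1-base} I would produce an explicit presentation and verify that the generator and relation degrees fit in $[1,e]$ and $[1,2e]$ respectively. Several of these are done in Examples~\ref{eg:base-1-0}, \ref{eg:base-1-33}, and \ref{eg:exception-1-5}; the remaining rows, such as $L'=\tfrac{3}{7}P_1$ and $L'=\tfrac{1}{3}P_1+\tfrac{1}{3}P_2$, can be handled the same way, by first computing the Hilbert series via~\cite[Theorem~4.2.1]{zhou:orbifold-riemann-roch-and-hilbert-series} and then producing a candidate ring whose relations are forced by pole-order accounting at the stacky points. In parallel I would record admissibility (Definition~\ref{defn:admissible}) for each triple that is going to seed the stabilizer-order induction, following the template of Example~\ref{eg:base-1-33-adm}; this is the bookkeeping that makes Corollary~\ref{cor:raise-stacky-order} available later.

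Next I would induct on the number of stacky points, fixing $(g,\delta)=(1,\delta)$ and $e_1=\cdots=e_r=3$. Given such a signature not already in a base case, write $L=L'+\tfrac{1}{3}P_r$ where $(\sx',\Delta,L')$ has signature $(1;3,\ldots,3;\delta)$ with $r-1$ stacky points, and assume the theorem for $L'$. If $P_r$ is not a basepoint of any $kL'$, Riemann--Roch supplies the dimension identity~\eqref{eqn:deg1-sat-ind} (using that $\deg\lfloor kL'\rfloor\geq 2g-1$ holds once $L'$ has enough positive-degree content), and Lemma~\ref{lem:sat-1} produces $R_L$ with the stated bounds; notice Remark~\ref{rem:quad-gen} forces the new relations to be quadratic in the new generators, keeping everything inside degree $2e=10$. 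If $P_r$ is a basepoint, the assumption $g=1$ together with $\deg 3L'\geq 2$ places us in the hypotheses of Lemma~\ref{lem:sat-2}, which again produces the presentation with degree bounds $\leq 5$ and $\leq 10$. This deals with every signature $(1;3,\ldots,3;\delta)$ not appearing in Table~\ref{table:g-1-base} or Table~\ref{table:g-1-exceptional}. Finally, Corollary~\ref{cor:raise-stacky-order} applied to each admissible base case bootstraps the theorem from $(1;3,\ldots,3;\delta)$ to arbitrary signatures $(1;e_1,\ldots,e_r;\delta)$.

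The hard part is delimiting the exceptional list in Table~\ref{table:g-1-exceptional}. These are the signatures for which either Stage~2 cannot start (there is no smaller log spin curve satisfying the hypotheses of Lemma~\ref{lem:sat-1} or~\ref{lem:sat-2}) or the directly computed presentation genuinely requires a generator or relation above $e$ or $2e$. Concretely, I would enumerate the finitely many signatures with $r\leq 1$ stacky point and/or $L\sim P-Q$, compute Hilbert series and minimal presentations by the same pole-order method used in the base cases, and verify that the signatures exceeding the uniform bound are exactly those listed in Table~\ref{table:g-1-exceptional}. The only subtle point in this enumeration is checking that every signature not in Table~\ref{table:g-1-exceptional} can indeed be reached from some admissible base case by adding stacky points and raising stabilizer orders; this reduces to a small combinatorial check on the allowed reductions $L\mapsto L-\tfrac{1}{3}P_r$ and on the parity of the $e_i$'s forced by Remark~\ref{rem:odd-denom}.
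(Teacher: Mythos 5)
Your overall strategy is the paper's: verify the explicit base cases of Table~\ref{table:g-1-base} together with admissibility, add stacky points via Lemmas~\ref{lem:sat-1} and~\ref{lem:sat-2}, and raise stabilizer orders via Lemma~\ref{lem:raise-stacky-order} and Corollary~\ref{cor:raise-stacky-order}. However, there are two concrete gaps in the execution.

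First, when $\delta>0$ your induction on the number of stacky points bottoms out at the signature $(1;-;\delta)$, which does not appear in Table~\ref{table:g-1-base}, and there is no genus-one analogue of Corollary~\ref{cor:g-2-presentation-bound} to quote. The paper fills this in with a separate preliminary induction: since $\delta\geq 2$ (Remark~\ref{rem:delta-not-1}) forces $\halfcan_X$ to be linearly equivalent to an effective divisor of positive degree, one adds the log points one at a time to the base case $\halfcan_X'=0$ of Example~\ref{eg:base-1-0}, again splitting on whether the new point is a basepoint and invoking Lemma~\ref{lem:sat-1} or~\ref{lem:sat-2}. Without this step the base of your stacky-point induction is unproven for every $\delta>0$.

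Second, your handling of the $\delta=0$, $r=1$ signatures hides a real step. These form two infinite families, $\halfcan=\frac{e_1-1}{2e_1}P_1$ and $\halfcan=P-Q+\frac{e_1-1}{2e_1}P_1$ with $e_1$ odd, so they cannot be disposed of by a finite enumeration; the non-exceptional members must be reached by raising the stabilizer order. The difficulty is that the only available starting points, $P-Q+\frac{2}{5}P_1$ and $\frac{3}{7}P_1$, are themselves exceptional (relations in degree $12>2\cdot 5$ and $15>2\cdot 7$), so Corollary~\ref{cor:raise-stacky-order} --- whose hypotheses include that the base satisfies Theorem~\ref{thm:main} --- cannot be invoked as a black box. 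The paper instead applies Lemma~\ref{lem:raise-stacky-order} directly to these admissible-but-exceptional bases and checks by hand that the inherited out-of-range relation falls below the new bound $2e$ once $e_1\geq 7$ (respectively $e_1\geq 9$), after which the corollary takes over. Your proposal needs this argument made explicit; as written, the single-stacky-point cases with $\delta=0$ are not established.
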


\sssec*{Idea of Proof:}
We check the theorem in two cases, depending on if $\delta > 0$.
If $\delta > 0$, we first check that the theorem holds for
$\halfcan_X$ by inductively adding in log points to the base case of $
\halfcan_X' = 0$. Then we check that the theorem holds for $\halfcan$
by adding stacky points and then inductively raising the 
stabilizer orders of stacky points in the following sequence of 
steps. When raising the stacky orders, it is important that we 
increment the stabilizer orders of as many stacky points as 
possible to maintain admissibility for the maximal possible sets of 
stacky points. Then, we may also use the fact that raising the 
stabilizer orders of any subset of these maximal sets of stacky 
points will still preserve admissibility.

The check for $\delta = 0$ is similar, although in this case we do
not need to add in log points, only stacky points, and we will need
to utilize the base cases from Subsection ~\ref{ssec:g-1_base}.

\begin{proof}

\vskip.1in
\noindent
{\sf Case 1: $\delta > 0$}

If $\delta > 0$, we must have $\delta \geq 2$, by Remark ~\ref{rem:delta-not-1}. In this case, let $\halfcan_X \in \di X$ satisfy $2 \halfcan_X \sim K_X + \Delta$. We have $\deg \halfcan_X \geq 1$, so, by Riemann--Roch, $h^0(\sx, \halfcan) \geq 1.$ Therefore, $\halfcan$ is linearly equivalent to an effective divisor. Thus, without loss of generality, we may assume $\halfcan$ is an effective divisor.

We first now show the theorem holds for $\halfcan_X$ by induction. Since $\halfcan_X$ is effective, we may induct on the degree of the log spin canonical divisor. The base case is easy: the theorem holds for $\halfcan_X' = 0$ by Example ~\ref{eg:base-1-0}. Assume it holds for $\halfcan_X' \in \di X$, with $\halfcan_X'$ effective. We will show it holds for $\halfcan_X' + P$, verifying the inductive step. There are two cases, depending on whether $P$ is a basepoint of $\halfcan_X'$.

First, if $P$ is not a basepoint of $\halfcan',$ then the hypotheses of Lemma ~\ref{lem:sat-1} are satisfied.
Therefore, by Lemma ~\ref{lem:sat-1}, the theorem holds for $\halfcan = \halfcan' + P$.

Otherwise, $P$ is a basepoint of $\halfcan_X'$, so the hypotheses of Lemma ~\ref{lem:sat-2} are satisfied since $\deg 3(\halfcan_X') \geq 2$ as $\deg \halfcan_X' \geq 1.$ Therefore, by Lemma ~\ref{lem:sat-2}, the theorem holds for $\halfcan_X = \halfcan_X'+P$. By induction, the theorem holds for $\halfcan_X$.

To complete the case that $\delta > 0$, we now need show the theorem holds for a \emph{stacky} log spin canonical divisor $\halfcan$.  It suffices to show that if the theorem holds for a log spin canonical divisor $\halfcan'$ with $\deg \lfloor L' \rfloor > 0$, then it holds for $L' + \subhalf {e_i}P_i$ with $e_i$ odd.  As above, if $P$ is not a basepoint of $\halfcan'$ then the theorem holds for $L' + \subhalf {e_i}P_i$ by Lemma ~\ref{lem:sat-1}. On the other hand, if $P$ is a basepoint of $\halfcan'$ then the theorem holds for $L' + \subhalf {e_i}P_i$ by Lemma ~\ref{lem:sat-2}.

\vskip.1in
\noindent
{\sf Case 2: $\delta = 0$}

Since $\delta = 0$, we may write $\halfcan = \halfcan_X + \sum_{i = 1}^{r}\subhalf{e_i}P_i.$ There are now two further subcases, depending on whether $\halfcan_X = 0$ or $\halfcan_X = P - Q$ for $P$ and $Q$ two distinct hyperelliptic fixed  points.

\vskip.1in
\noindent
{\sf Case 2a: $\halfcan_X = P - Q, P \neq Q$}

Note that we are assuming $\halfcan$ is not one of the exceptional cases listed in Table ~\ref{table:g-1-exceptional}, so we may either assume $\sx$ has 1 stacky point with $e_1 > 5$ or at least 2 stacky points.

First, we deal with the case $\sx$ has at least 1 stacky point. By Example ~\ref{eg:exception-1-5}, if $L' = P - Q + \frac{2}{5}P_1,$ then $R_{\halfcan'}$ is generated in degrees $2,3,$ and $5$ with a single relation in degree 12. Furthermore, $(\sx, \halfcan', \{1\})$ is admissible, and satisfies the hypotheses of Lemma ~\ref{lem:raise-stacky-order}. Observe that $\halfcan'$ itself is an exceptional case, as it has a generator in degree $12 > 2 \cdot 5$. However, after applying Lemma ~\ref{lem:raise-stacky-order}, we see that $P - Q + \frac{3}{7}P_2$ does satisfy the constraints of this theorem, because only relations in degree $\leq 14 = 2 \cdot 7$ are added, and the relation in degree $12$ coming from $R_{\halfcan'}$, lies in a degree less than $2 \cdot 7 = 14$. Therefore, the Theorem holds for $L' = P - Q + \frac{3}{7}P_2$. Then, applying Lemma ~\ref{lem:raise-stacky-order} $\frac{e-7}{2}$ times shows that the Theorem holds for $L = P - Q + \frac{e- 1}{2e}P_1$.

Second, we deal with the case that $\sx$ has at least two stacky points. 
If $(\sx', \Delta, \halfcan')$ is a spin canonical curve so that $L' = P - Q + \frac{1}{3}P_1+ \frac{1}{3}P_2$, then as found in 
Example ~\ref{eg:base-1-33}, the triple $(\sx', 0, P-Q + \frac{1}{3}P_1+\frac{1}{3}P_2)$
satisfies the hypotheses of Lemma ~\ref{lem:raise-stacky-order}.
Therefore, applying Lemma ~\ref{lem:sat-2} $r-2$ times, we see that the theorem holds for $(\sx', \Delta', \halfcan')$ with $L' = P - Q + \sum_{i = 1}^{r}\frac{1}{3}P_i.$ 
Finally, by Corollary ~\ref{cor:raise-stacky-order}, this theorem holds 
for $L' = P - Q + \sum_{i = 1}^{r}\subhalf {e_i}P_i$, as desired. 

\vskip.1in
\noindent
{\sf Case 2b: $\halfcan_X = 0$}

This case is analogous to 2a: If there is only one stacky point, we start at $L = \frac{3}{7}P_1,$ and inductively increment the stabilizer order. Note that by Table ~\ref{table:g-1-base}, $L = \frac{3}{7}P_1,$ will have a relation in degree $15.$ However, once $e_1 \geq 9$, we have $2 \cdot e_1 > 15$, so the theorem holds for such stacky curves. Once the log spin canonical divisor has at least two stacky points, the argument proceeds as in Case 2a.
\end{proof}

\begin{rem}
\label{rem:genus-1-explicit-bound}
In addition to the bound on the degree of the generators and relations, as detailed in Theorem ~\ref{thm:g-1-main}, the proof of Theorem ~\ref{thm:g-1-main} yields an explicit procedure for computing those minimal generators and relations. One can start with the generators and relations found in the base cases and inductively add generators and relations as one adds stacky points and increments stabilizer orders. As described in Remark ~\ref{rem:genus-2-explicit-bound}, when $e := \max(e_1,\ldots, e_r) \geq 7$, there is necessarily a generator in degree $e$ and a relation in degree $2e$.
\end{rem}

%%%%%%%%%%%%%%%%%%%%%%%%%%%% Genus zero %%%%%%%%%%%%%%%%%%%%%%%%%%%%%%%

\section{Genus Zero}
\label{sec:g-0}
We will prove that if $(\sx, \Delta, \halfcan)$ is a log spin
curve and $\sx$ has signature $\sigma := (0; e_1, \ldots, e_r;
\delta)$, then $R(\sx, \Delta, L)$ is generated in degree at most
$e := \max(5, e_1, \ldots, e_r)$ with relations generated in degree
at most $2e$, so long as $\sigma$ does not lie in the finite list
given in Table ~\ref{table:g-0-exceptional}.

As noted in Remark ~\ref{rem:delta-not-1}, $\delta$ is even. Thus,
we can reduce the problem into two cases: $\delta \geq 2$ and
$\delta = 0$. In the former case, $\halfcan$ is linearly equivalent
to an effective divisor, so the result of Theorem \ref{thm:main}
follows immediately by repeatedly applying Lemma ~\ref{lem:sat-1}
to add the necessary stacky points. On the other hand, the proof
when $\delta = 0$ is more involved. We dedicate the remainder of
this section to that case in the following steps: characterizing
saturations (Subsection ~\ref{ssec:g-0-saturation}),
describing base cases (Subsection ~\ref{ssec:g-0-base}), and
presenting exceptional cases (Subsection ~\ref{ssec:g-0-exceptional})
\footnote{Several computations used to generate the tables in
Subsection ~\ref{ssec:g-0-base} and Subsection
~\ref{ssec:g-0-exceptional} were done using a modified version of
the MAGMA code given in the work of O'Dorney \cite{dorney:canonical}.}
Finally, we apply inductive processes using Lemma ~\ref{lem:sat-3}
and Lemma ~\ref{lem:raise-stacky-order} to prove the main theorem
in the full genus zero case (Subsection ~\ref{ssec:g-0-main}).
 
\begin{rem}
Since all points are linearly equivalent on $\BP^1_\Bk$, 
$\halfcan_X \sim n \infty$ for some $n \in \BN$ and
$K_X \sim -2 \infty$. We will use this convention
throughout this section.
\end{rem}

\ssec{Saturation}
\label{ssec:g-0-saturation}
\begin{table}
\begin{tabular}
	{| c | c || c |}
	\hline
	Signature $\sigma$ & Condition & Saturation \\
	\hline
	\hline

	$(0; 3, 3, 3; 0)$ & & $\infty$ \\	\hline

	$(0; 3, 3, 5; 0)$ & & $18$ \\	\hline
	
	$(0; 3, 3, 7; 0)$ & & $12$ \\	\hline
	
	$(0; 3, 3, 9; 0)$ & & $12$ \\	\hline
	
	$(0; 3, 5, 5; 0)$ & & $8$ \\	\hline
	
	$(0; 5, 5, 5; 0)$ & & $8$ \\	\hline
	
	$(0; 3, 3, 3, 3; 0)$ & & $6$ \\	\hline
	
	\hline
	\hline
	
	$(0; 3, 3, \ell; 0)$ & $\ell > 9$ & $9$ \\	\hline
	
	$(0; a, b, c; 0)$ & not listed above & $5$ \\	\hline
	
	$(0; e_1, \ldots, e_r; 0)$ & not listed above & $3$ \\	\hline
\end{tabular}

\caption{Genus 0 Saturation}
\label{table:g-0-sat}
\end{table}

First, we present the saturations of the log spin canonical divisor
(recall Definition ~\ref{defn:sat}) for all cases where $g = 0$ and
$\delta = 0$ in Table ~\ref{table:g-0-sat}. The saturations can
be computed using Riemann--Roch. By classifying the saturations of
all signatures, we can determine the base cases on which we can
apply inductive lemmas from Section ~\ref{sec:induction}. Note that
the saturations of log spin canonical divisors only depend on the
signature here. In Table ~\ref{table:g-0-sat}, exceptional cases
are listed first and generic cases follow.

\ssec{Base Cases}
\label{ssec:g-0-base}
In order to apply Lemma ~\ref{lem:sat-3} and Lemma
~\ref{lem:raise-stacky-order} when $\delta = 0$,
we need to determine appropriate base cases that will
cover all but finitely many signatures by induction.
Here we provide such base cases and demonstrate that
they satisfy all of the necessary conditions of
Lemma ~\ref{lem:sat-3} and Lemma ~\ref{lem:raise-stacky-order}
(e.g. admissibility as defined in Definition \ref{defn:admissible}).
We also show that that the associated log spin
canonical rings are generated in degree at most $e := \max(5, e_1,
\ldots, e_r)$ with relations generated in degree at most $2e$.

\begin{lem}
\label{lem:g-0-admissible-cases}
Let $(\sx', \Delta, \halfcan')$ be a log spin curve with signature
$\sigma := (0; e_1, \ldots, e_r; 0)$. Then, $R' := R_{\halfcan'}$ is
generated by elements of degree at most $e = \max(5 , e_1, \ldots,
e_r)$ with relations in degree at most $2e$. Furthermore, each of
the cases in Table ~\ref{table:g-0-base-cases} satisfy the
conditions of Lemma ~\ref{lem:raise-stacky-order} (i.e.~ either $(\sx',
\halfcan', J) = (\sx', -\infty + \sum_{i = 1}^{r} \subhalf{e_i}
P_i, J)$ is admissible or the stabilizer orders are all $3$ as per
Case ~\ref{custom:three-cases-1} of Remark ~\ref{rem:three-cases}):
 
\rm{
\begin{table}
\begin{tabular}
	{| c || c | c | c |}
	\hline
	Case & Signature $\sigma$ & $J$ & $e$\\
	\hline
	\hline

	(a) & $(0; 3, 3, 11; 0)$ & $\{3\}$ & $11$ \\	\hline

	(b) & $(0; 3, 5, 9; 0)$ & $\{3\}$	& $9$ \\ \hline

	(c) & $(0; 3, 7, 7; 0)$ & $\{2, 3\}$ & $7$ \\ \hline

	(d) & $(0; 5, 5, 7; 0)$ & $\{3\}$	& $7$ \\ \hline
	
	(e) & $(0; 5, 7, 7; 0)$ & $\{2, 3\}$ & $7$ \\ \hline
	
	(f) & $(0; 7, 7, 7; 0)$ & $\{1, 2, 3\}$	& $7$ \\ \hline

	(g) & $(0; 3, 3, 3, 5; 0)$ & $\{4\}$ & $5$ \\ \hline
	
	(h) & $(0; 3, 3, 5, 5; 0)$ & $\{3, 4\}$ & $5$ \\ \hline
	
	(i) & $(0; 3, 5, 5, 5; 0)$ & $\{2, 3, 4\}$ & $5$ \\ \hline
	
	(j) & $(0; 5, 5, 5, 5; 0)$ & $\{1, 2, 3, 4\}$ & $5$ \\ \hline

	(k) &	$(0; 3, 3, 3, 3, 3; 0)$ & $\{1, 2, 3, 4, 5\}$ & $5$ \\ \hline
\end{tabular}

\caption{Genus 0 Base Cases}
\label{table:g-0-base-cases}
\end{table}
}

\end{lem}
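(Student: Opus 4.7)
The plan is to handle each of the eleven cases in Table~\ref{table:g-0-base-cases} individually, following the recipe established in Example~\ref{eg:base-0-377}. For a fixed signature $\sigma = (0; e_1, \ldots, e_r; 0)$, I would first apply \cite[Theorem 4.2.1]{zhou:orbifold-riemann-roch-and-hilbert-series} to compute the Hilbert series $P_{(\sx', 0, \halfcan')}(k)$ explicitly. The degrees in which new generators of $R'$ must appear can be read off from the Hilbert series and a dimension count: once the saturation degree listed in Table~\ref{table:g-0-sat} is reached, the Generalized Max Noether Theorem \cite[Lemma 3.1.1]{vzb:stacky} ensures that multiplication by previously constructed generators is surjective in all higher degrees. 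For each $i \in J$, I would choose the generator $y_{i, e_i'}$ in degree $e_i$ with maximal possible pole order $\frac{e_i-1}{2}$ at $P_i$; existence follows from Riemann--Roch on $\mathbb{P}^1$.

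Next, I would write down candidate relations by pole order considerations at the stacky points, analogous to the degree-$10$ and degree-$14$ relations in Example~\ref{eg:base-0-377}. Correctness of the proposed presentation may be checked either by comparing the Hilbert series of the quotient ring against $P_{(\sx',0,\halfcan')}(k)$, or by the irreducibility/dimension argument used in Example~\ref{eg:adding-point}. The bounds of generation in degree at most $e=\max(5,e_1,\ldots,e_r)$ and relations in degree at most $2e$ then follow by inspection of the resulting presentation in each case.

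To verify admissibility, I would check the three conditions of Definition~\ref{defn:admissible} for each case. Condition \ref{custom:Ad-i} is immediate from the chosen pole orders of $y_{i,e_i'}$. Condition \ref{custom:Ad-ii} requires showing every other generator $z$ satisfies
\[
	\frac{-\ord_{P_i}^{\halfcan_X'}(z)}{\deg z} < \frac{e_i'-1}{2e_i'};
\]
this is verified by explicitly listing pole orders of all generators at each $P_i$, exactly as illustrated in Example~\ref{eg:base-0-377-adm} and Figure~\ref{fig:377}. Condition \ref{custom:Ad-iii} (with $g=0$) reduces to the numerical inequality $\deg\lfloor e_i L'\rfloor \geq \max_{k\geq 0}\#S_{\sigma,J}(i,k)$, which I would verify by enumerating $j \in J \setminus \{i\}$ with $e_j'+2k \mid e_i-e_j'$ for the relevant small values of $k$. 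Case~(k) with signature $(0;3,3,3,3,3;0)$ does not go through Definition~\ref{defn:admissible} at all and instead falls under Case~\ref{custom:three-cases-1} of Remark~\ref{rem:three-cases}, so one only needs to produce the rational section $y_{1,3}$ with $\ord_{P_i}^{\halfcan_X'}(y_{1,3})=1$ for all $i$, which follows from Riemann--Roch.

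The principal obstacle is the sheer volume: eleven signatures each demand their own Hilbert series computation, an explicit generator list with prescribed pole profiles, and a verification of relations. The most delicate bookkeeping occurs in \ref{custom:Ad-iii} for the cases with $|J|>1$, namely (c), (e), (f), (h), (i), (j), where divisibility conditions among the $e_j'$ must be tracked carefully to bound $\#S_{\sigma,J}(i,k)$ uniformly in $k$. Case~(a) $(0;3,3,11;0)$, with the largest value of $e$, requires the most intricate Hilbert series manipulation, but each individual check remains routine.
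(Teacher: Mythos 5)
Your proposal is correct and follows essentially the same route as the paper: the paper's proof likewise proceeds case by case, computing Hilbert series and explicit presentations in the manner of Examples~\ref{eg:base-0-377} and~\ref{eg:base-0-377-adm}, tabulating generator and relation degrees, and then verifying \ref{custom:Ad-i}--\ref{custom:Ad-iii} (with case~(k) covered by Case~\ref{custom:three-cases-1} of Remark~\ref{rem:three-cases}). The only difference is presentational: the paper records the outcomes in tables and defers the routine verifications to the cited examples, exactly the computations your plan spells out.
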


\begin{proof}
Recall that the generator and relation degree bounds for case (b)
are proven in Example ~\ref{eg:base-0-377} and the admissibility
condition is checked in Example ~\ref{eg:base-0-377-adm}. For
the remaining cases, we follow a similar method to find a
presentation satisfying the desired conditions. The log spin
canonical ring $R(\sx', 0, L')$ is generated as a $\Bk$-algebra by
elements of degree at most $e$ with relations in degree at most $2e$
for each case as described in the Table
~\ref{table:g-0-base-cases-degrees}.
\begin{table}
\begin{tabular}
	{| c || c | c | c |}
	\hline
	Case & Generator Degrees & Degrees of Relations & $e$\\
	\hline
	\hline

	(a) & $\{3, 7, 9, 11\}$ & $\{14, 18\}$ & $11$ \\	\hline

	(b) & $\{3, 5, 7, 9\}$ & $\{12, 14\}$ & $9$ \\ \hline

	(c) & $\{3, 5, 7, 7\}$ & $\{10, 14\}$ & $7$ \\ \hline
	
	(d) & $\{3, 5, 5, 7\}$ & $\{10, 12\}$	& $7$ \\ \hline
	
	(e) & $\{3, 5, 5, 7, 7\}$ & $\{10, 10, 12, 12, 14\}$	& $7$ \\ \hline
	
	(f) & $\{3, 5, 5, 7, 7, 7\}$ & $\{10, 10, 10, 12, 12, 12, 14, 14, 14\}$	& $7$ \\ \hline

	(g) & $\{3, 3, 4, 5\}$ & $\{8, 9\}$ & $5$ \\ \hline
	
	(h) & $\{3, 3, 4, 5, 5\}$ & $\{8, 8, 9, 9, 10\}$ & $5$ \\ \hline
	
	(i) & $\{3, 3, 4, 5, 5, 5\}$ &
	$\{8, 8, 8, 9, 9, 9, 10, 10, 10\}$ & $5$ \\ \hline
	
	(j) & $\{3, 3, 4, 5, 5, 5, 5\}$ &
	$\{8, 8, 8, 8, 9, 9, 9, 9, 10, 10, 10, 10, 10, 10\}$ & $5$ \\ \hline

	(k) &	$\{3, 3, 3, 4, 4, 5\}$ & $\{6, 7, 7, 8, 8, 8, 9, 9, 10\}$ & $5$ \\ \hline
\end{tabular}

\caption{Generators and Relations for Genus 0 Base Cases}
\label{table:g-0-base-cases-degrees}
\end{table}

We can also always find a presentation for these cases such that
they satisfy \ref{custom:Ad-i} and \ref{custom:Ad-ii} and that $\initial(I')$ is
generated by products of two monomials. Again, the procedure to
verify these is similar to that in Example
~\ref{eg:base-0-377} and Example ~\ref{eg:base-0-377-adm}.

Furthermore, each case always satisfies \ref{custom:Ad-iii} as demonstrated in table ~\ref{table:g-0-base-cases-admissibility}.
Notice that the $e_i$ and $\{e_j' : j \neq i\}$ are equivalent
for any choice of $i \in J$ for these cases, so $\deg \lfloor e_i \halfcan
\rfloor$ and $\max_{k \geq 0} \#S_{(\sigma, J)}(i)$ are
independent of the choice of $i$.

\begin{table}
\begin{tabular}
	{| c | c | c || c | c |}
	\hline
	Case & Signature $\sigma$ & $J$ & $\deg \lfloor e_i L \rfloor$ &
	$\max_{k \geq 0} \#S_{(\sigma, J)}(i)$ \\
	\hline
	\hline

	(a) & $(0; 3, 3, 11; 0)$ & $\{3\}$ & $1$ & $0$ \\	\hline
	
	(b) & $(0; 3, 5, 9; 0)$ & $\{3\}$ & $1$ & $0$ \\ \hline
	
	(c) & $(0; 3, 7, 7; 0)$ & $\{2, 3\}$ & $2$ & $0$ \\ \hline
	
	(d) & $(0; 5, 5, 7; 0)$ & $\{3\}$ & $1$ & $0$ \\ \hline
	
	(e) & $(0; 5, 7, 7; 0)$ & $\{2, 3\}$ & $2$ & $0$ \\ \hline

	(f) & $(0; 7, 7, 7; 0)$ & $\{1, 2, 3\}$ & $3$ & $0$ \\ \hline

	(g) & $(0; 3, 3, 3, 5; 0)$ & $\{4\}$ & $2$ & $0$ \\ \hline
	
	(h) & $(0; 3, 3, 5, 5; 0)$ & $\{3, 4\}$ & $3$ & $0$ \\ \hline
	
	(i) & $(0; 3, 5, 5, 5; 0)$ & $\{2, 3, 4\}$ & $4$ & $0$ \\ \hline
	
	(j) & $(0; 5, 5, 5, 5; 0)$ & $\{1, 2, 3, 4\}$ & $5$ & $0$ \\ \hline

	(k) & $(0; 3, 3, 3, 3, 3; 0)$ & $\{1, 2, 3, 4, 5\}$ & $5$ & $0$ \\ \hline
\end{tabular}	

\caption{Checking \ref{custom:Ad-iii} for Genus 0 Base Cases}
\label{table:g-0-base-cases-admissibility}
\end{table}

Thus, all of the cases are admissible and satisfy the additional
desired conditions.
\end{proof}

\ssec{Exceptional Cases}
\label{ssec:g-0-exceptional}
In this subsection, we describe the cases that are not covered by induction, which are also the only exceptions to Theorem ~\ref{thm:main} in the case $g = 0$.
In table ~\ref{table:g-0-exceptional} We present the explicit generators and relations for the remaining
cases given by signatures in the finite set

\begin{align*}
	S &:= \{(0; 3, 3, \ell; 0) : 3 \leq \ell \leq 9 \text{ odd}\} \\
		&\cup \{(0; 3, 5, 5; 0) ,(0; 3, 5, 7; 0), (0; 5, 5, 5; 0), (0; 3, 3, 3, 3; 0)\}
\end{align*}

\begin{table}
\begin{tabular}
	{| c || c | c | c |}
	\hline
	Signature $\sigma$ & Generator Degrees & Degrees of Relations & $e$ \\
	\hline
	\hline

	$(0; 3, 3, 3; 0)$ & $\{3\}$ & $\emptyset$ & $5$ \\	\hline

	$(0; 3, 3, 5; 0)$ & $\{3, 10, 15\}$ & $\{30\}$ & $5$ \\	\hline
	
	$(0; 3, 3, 7; 0)$ & $\{3, 7, 12\}$ & $\{24\}$ & $7$ \\	\hline
	
	$(0; 3, 3, 9; 0)$ & $\{3, 7, 9\}$ & $\{21\}$ & $9$ \\	\hline
	
	$(0; 3, 5, 5; 0)$ & $\{3, 5, 10\}$ & $\{20\}$ & $5$ \\	\hline
	
	$(0; 3, 5, 7; 0)$ & $\{3, 5, 7\}$ & $\{17\}$ & $7$ \\	\hline
	
	$(0; 5, 5, 5; 0)$ & $\{3, 5, 5\}$ & $\{15\}$ & $5$ \\	\hline
	
	$(0; 3, 3, 3, 3; 0)$ & $\{3, 3, 4\}$ & $\{12\}$ & $5$ \\	\hline
\end{tabular}

\caption{Genus 0 Exceptional Cases}
\label{table:g-0-exceptional}
\end{table}

\begin{rem}
These cases give all of the exceptions to the $e$ and $2e$ bounds
on the generator and relation degree. Notice that each of these
exceptional cases, apart from $(0; 3, 5, 7; 0)$, also has
exceptional saturation as seen in Table ~\ref{table:g-0-sat}.
Intuitively, these exceptional saturations can be viewed as
``forcing'' generators and relations in higher degrees than expected.
\end{rem}

\ssec{Main Theorem for Genus Zero}
\label{ssec:g-0-main}
Now we can combine the base cases from Subsection \ref{ssec:g-0-base} with the inductive lemmas of Section \ref{sec:induction}.

\begin{thm}
\label{thm:g-0-main}
Let $(\sx, \Delta, \halfcan)$ log spin curve with signature
$\sigma := (0; e_1, \ldots, e_r; \delta)$.
Then, the log spin canonical ring $R(\sx, \Delta, \halfcan)$
is generated as a $\Bk$-algebra by elements of degree at most $e =
\max(5, e_1, \ldots, e_r)$ and has relations in degree at most $2e$,
so long as $\sigma$ does not lie in the finite list of exceptional
cases in Table ~\ref{table:g-0-exceptional}.
\end{thm}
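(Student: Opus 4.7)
By Remark~\ref{rem:delta-not-1} the integer $\delta$ is even, so the argument splits into the cases $\delta \ge 2$ and $\delta = 0$.

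When $\delta \ge 2$, the coarse-space divisor $L_X$ has non-negative degree $(\delta - 2)/2$ on $\BP^1$ and may be taken effective. If $\deg L_X = 0$ then $L_X \sim 0$ and $R(X, \Delta, L_X) \cong \Bk[u]$ with $u$ in degree one; if $\deg L_X \ge 1$ then $L_X$ is very ample and the ring is the homogeneous coordinate ring of a rational normal curve, generated in degree one. Starting from this base, I add the fractional contribution $\frac{e_i-1}{2e_i} P_i$ of each stacky point in turn by applying Lemma~\ref{lem:sat-1}; hypothesis~\eqref{eqn:deg1-sat-ind} is immediate from Riemann--Roch on $\BP^1$, and no point is a basepoint. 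By Remark~\ref{rem:quad-gen} the relations introduced are quadratic in the new generators, so the $e$, $2e$ bound is preserved throughout.

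When $\delta = 0$, the coarse-space divisor has negative degree and the argument is intrinsically stacky. For each non-exceptional signature $\sigma$, I choose a base case from Table~\ref{table:g-0-base-cases} (which by Lemma~\ref{lem:g-0-admissible-cases} carries an admissible presentation meeting the $e$, $2e$ bounds) and traverse to $\sigma$ via the inductive lemmas of Section~\ref{sec:induction}. If the base case has the same number of stacky points as $\sigma$, I raise the stabilizer orders via Corollary~\ref{cor:raise-stacky-order}; if $\sigma$ has more stacky points, I first add order-$3$ points using Lemma~\ref{lem:sat-3} (valid because Table~\ref{table:g-0-sat} gives saturation exactly $3$ in the relevant regime), and then raise the stabilizer orders of the newly added points. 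For the all-order-$3$ signatures with $r \ge 5$, Case~\ref{custom:three-cases-1} of Remark~\ref{rem:three-cases} permits direct use of Lemma~\ref{lem:raise-stacky-order} without separately re-verifying admissibility.

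The main obstacle is the combinatorial bookkeeping: every signature not appearing in Table~\ref{table:g-0-exceptional} must be shown to be reachable from some base case via the moves above, with the $e$ and $2e$ bounds surviving each step. The delicate family is $(0; 3, 3, \ell; 0)$, which has anomalously large saturation for $\ell \in \{3, 5, 7, 9\}$ by Table~\ref{table:g-0-sat}; together with $(0; 3, 5, 5; 0)$, $(0; 3, 5, 7; 0)$, $(0; 5, 5, 5; 0)$, and $(0; 3, 3, 3, 3; 0)$, these are exactly the exceptional cases which are not reachable by the inductive strategy and which must therefore be handled by the direct computations summarized in Table~\ref{table:g-0-exceptional}.
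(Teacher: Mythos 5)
Your proposal is correct and follows essentially the same route as the paper: for $\delta \ge 2$ one inducts with Lemma~\ref{lem:sat-1} from the effective coarse-space divisor, and for $\delta = 0$ one combines the base cases of Table~\ref{table:g-0-base-cases} (via Lemma~\ref{lem:g-0-admissible-cases}) with Lemma~\ref{lem:sat-3} and Corollary~\ref{cor:raise-stacky-order}, exactly as in the paper's Cases 2 and 3. The only omission is the trivial subcase $\delta = 0$, $r < 3$, where $\deg \lfloor k\halfcan \rfloor < 0$ for all $k > 0$ and hence $R(\sx, \Delta, \halfcan) = \Bk$, which your ``traverse from a base case'' framework does not literally cover.
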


\sssec*{Idea of Proof:}
The method of this proof is almost identical to that of Theorem
~\ref{thm:g-1-main}. When $\delta > 0$, we first add in log points,
and then increment the stabilizer orders of stacky points, checking
that the theorem holds at each step. The more technical case occurs
when $\delta = 0$. In this case, we increment the stabilizer
orders of stacky points starting from one of the base cases, and
check that every stacky curve can be reached by a sequence of
admissible incrementations from a base case.

\begin{proof}
If $\sx$ has no stacky points, then we can assume that $\halfcan \sim
n \cdot \infty$ with $n \in \BZ_{\geq -1}$. This is a classical
case done by Voight and Zureick-Brown \cite[Section 4.2]
{vzb:stacky}. When $n = -1$, then $R_\halfcan = \Bk$.  When $n = 0$,
then $R_\halfcan = \Bk[x]$.  When $n > 0$, inductively applying Lemma
\ref{lem:sat-1} tells us that $R_\halfcan$ is generated in degree 1
with relations generated in degree 2.

First, let us consider the case when $\delta \geq 2$. In any such
case, $\lfloor \halfcan \rfloor$ is an effective divisor and the
conditions of Lemma ~\ref{lem:sat-1} are satisfied. Thus, we can
apply the Lemma ~\ref{lem:sat-1} inductively from the classical
case with no stacky points to get that $R(\sx, \Delta, \halfcan)$
is generated up to degree $e$.

By Remark ~\ref{rem:delta-not-1}, it only remains to deal with the
case $\delta = 0$, so $\halfcan$ is not necessarily effective. Let
the signature $\sigma$ be such that it is not one of the
exceptional cases contained in Table ~\ref{table:g-0-exceptional}. 
We get the following three cases, depending on the value of $r$:

\vskip.1in
\noindent
{\sf Case 1: $r < 3$}

If $r < 3$, then $\deg 
\lfloor k \halfcan \rfloor < 0$ for all $k \geq 0$ so we have the
trivial case where $R(\sx, \Delta, \halfcan) = \Bk$.

\vskip.1in
\noindent
{\sf Case 2: $3 \le r \le 5$}

If $3 \le r \leq 5$ and $\sigma$ is not one of the exceptional cases,
then we may apply Lemma ~\ref{lem:g-0-admissible-cases} and
Corollary ~\ref{cor:raise-stacky-order} to an appropriate base case
from Table ~\ref{table:g-0-base-cases} and deduce that $R(\sx,
\delta, \halfcan)$ is generated up  to degree $e := \max(5, e_1,
\ldots, e_r)$ with relations generated up to degree $2e$.

\vskip.1in
\noindent
{\sf Case 3: $r > 5$}

If $r > 5$, then we can use Lemma
~\ref{lem:sat-3} to add stacky points with
stabilizer order $3$ to case (k) of Table
~\ref{table:g-0-base-cases}, which corresponds to $(\sigma
= (0; 3, 3, 3, 3, 3; 0), J = \{1, 2, 3, 4, 5\})$. This case
satisfies the conditions of Lemma
~\ref{lem:sat-3} (recall from Table ~\ref
{table:g-0-sat} that $\sat(\Eff(\sigma)) = 3$), and the
immediate consequence of parts (a) and (c) of Lemma
~\ref{lem:sat-3} is that any $R(\sx', \Delta,
\halfcan')$ corresponding to signatures $\sigma'$ with ramification
orders all equal to $3$ for any $r > 5$ is generated up to
degree $e' := \max(5, e_1', \ldots, e_r')$ with relations generated
up to degree $2e'$. Furthermore, these cases satisfy all of the
conditions of Lemma ~\ref{lem:raise-stacky-order}. Now we can
apply Corollary ~\ref{cor:raise-stacky-order} to deduce that
$R(\sx, \delta, \halfcan)$ is generated up to degree $e :=
\max(5, e_1, \ldots, e_r)$ with relations generated up to degree
$2e$.
\end{proof}

\begin{rem}
\label{rem:genus-0-explicit-bound}
The proof of Theorem ~\ref{thm:g-0-main} in genus zero gives an
explicit construction of the generators and relations for the log
spin canonical ring $R_\halfcan$. This is similar to the case of genus 1
in Remark ~\ref{rem:genus-1-explicit-bound}. Furthermore, there is
a generator in degree $e$ and a relation in degree at least $2e-4$
when $e := \max(e_1, \ldots, e_r)$ is at least $7$ (see Remark ~\ref
{rem:genus-2-explicit-bound}). This can be seen from the inductive
application of Lemmas ~\ref{lem:sat-1}, \ref{lem:sat-2}, and \ref
{lem:sat-3}.
\end{rem}

\begin{rem}
\label{rem:cusp-generation-4}
Here, we describe how to obtain a slightly better bound for our
application to modular forms from Example ~\ref{eg:congruence-bounds} in the cases $g=0$ and $g=1$.
When $g=0$, a careful scrutiny of Theorem ~\ref{thm:g-0-main} reveals that,
if $\Delta > 0$ and $\sx$ has signature $(0; 3, \ldots, 3; \delta)$,
then $R_\halfcan$ is generated in weight at most 4. Since $\delta > 0$,
$\halfcan_X$ is effective. Additionally, $R_{\halfcan_X}$ is generated in weight 1,
and inductive applications of Lemma ~\ref{lem:sat-1} only add
generators in weights 3 and 4 and relations in weight at most 8.
Therefore, $R_\halfcan$ is generated in weight at most 4 with relations in
weight at most 8. Note that a similar analysis of the proof of
Theorem ~\ref{thm:g-1-main} yields that when $g = 1$, congruence
subgroups are generated in weight at most 4 with relations in
weight at most 8.
\end{rem}

%%%%%%%%%%%%%%%%%%%%%%%%% Further Research %%%%%%%%%%%%%%%%%%%%%%%%%%%%

\section{Further Research}
\label{sec:further-questions}
In this section, we present several directions for further research.
\begin{enumerate}
	\item As noted in Remark ~\ref{rem:explicit-generators}, the 
		proof of Theorem ~\ref{thm:main} 
		gives an 
		explicit procedure for computing the generators and relations of $R_
		\halfcan$ when the genus of $\sx$ is $0$ or $1$.  When $\sx$ has genus at least 2, Lemmas ~\ref{lem:sat-1} and $\ref{lem:sat-2}$ allow us to explicitly construct a presentation of $R_\halfcan$ from a presentation of $R_{\halfcan_X}$ where $X$ is the coarse space $X$.  However, obtaining a presentation for $X$ requires nontrivial computation.  This suggests the following Petri-like question:
		\begin{question}
		\label{ques:spin-canonical-petri}
			Is there a general structure theorem describing a set of minimal 
			generators and relations of $R_{\halfcan}$ where $(X, \Delta, L)$ is a log spin curve with no stacky points?
		\end{question}
		
\item One direction for further research is to extend the results
	of this paper to divisors $D \in \di \sx$ on a stacky curve $\sx$, 
	where $nD \sim K$ for some integer $n$ greater than $2$. The
	canonical rings	of such divisors often arise as rings of
	fractional weight $\frac{2}{n}$ modular forms.
	For more details on fractional weight modular forms, see Adler and
	Ramanan \cite[p. 96]{adler:moduli} and Milnor
	\cite[$\mathsection$ 6]{milnor:fractional-weight}.

\begin{question}
		\label{ques:fractional-weight}
			If $\sx$ is a stacky curve and $D \in \di \sx$ with $nD \sim K$, 
			where $K$ is the canonical divisor of $\sx$, can one bound the 
			degrees of generators and relations of $R_D$?
		\end{question}
		When $g=0$ and $D$ is effective, inductively applying Lemma ~\ref{lem:sat-1}  
		gives an affirmative answer to this question: If $\sx$ has signature 
		$(g; e_1, \ldots, e_r; \delta)$ then $R_D$ is generated in degree at 
		most $e = \max(e_1, \ldots, e_r)$ with relations in degree at most
		$2e$. 
		It may be possible to modify the proof of Lemma
		~\ref{lem:raise-stacky-order} to extend to the setting of fractional 
		weight modular forms. 
		Suitable generalizations of the 
		lemmas of Section ~\ref{sec:induction} might allow one to 
		follow similar questions to this paper and provide a general 
		answer to Question ~\ref{ques:fractional-weight}.
		\\

	\item The generic initial ideal
		encapsulates the idea of whether the relations for $R_\halfcan$ are 
		generically chosen. See Voight and Zureick-Brown \cite[Definition 2.2.7]{vzb:stacky} for a precise definition of the generic initial ideal.
		The proof of Theorem
		~\ref{thm:main} is decidedly non-generic. 
		In particular, Lemma ~\ref{lem:raise-stacky-order} 
		constructs generators with non-maximal pole orders at certain 
		points, making the relations non-generic. 
		\begin{question}
		\label{ques:generic-initial}
			If $(\sx, \Delta, \halfcan)$ is a log spin curve, can one write down the
			generic initial ideal explicitly?
		\end{question}
		
	\item In Subsection \ref{ssec:bounds-high-genus}, we reference 
		the work of Reid \cite[Theorem 3.4]{reid:infinitesimal}. We use his 
		proof that the 
		spin canonical ring is generated in degree at most 5 with
		relations in degree at most 10 in the 			
		non-log, non-stacky case when genus is at least 2. We extend this 
		bound of 5 and 10 to the log case, and then apply our inductive 
		lemmas to add stacky points and obtain bounds of $e = \max(5,e_1, \ldots, e_r)$ and $2e$.  However, Reid in fact proves something slightly stronger \cite[Theorem 3.4]{reid:infinitesimal}: that in 
		most cases his bound is actually 3 and 6 with well-characterized 
		exceptions.
		Generalizing this slightly stronger bound to the (non-stacky) log case case would allow us to inductively apply the lemmas from Section ~\ref{sec:induction} and improve Theorem ~\ref{thm:main} as follows:
		\begin{question}\label{ques:reduce-5,10-to-4,8}
			When $g \geq 2,$ can the bounds in Theorem \ref{thm:main} on 
			the degrees of generation and relations be reduced from 
			$e := \max(5, e_1, \ldots, e_r)$ and $\max(10, 2e_1, \ldots, 2e_r)$ to 
			$e' := \max(4, e_1, \ldots, e_r)$ and $2e'$, 
			apart from well a characterized list of families?
		\end{question}
		\begin{rem}\label{rem:reduce-3,6}
			Note that when $L$ is not effective and $\sx$ has a stacky point, $R_\halfcan$ must have a generator in degree 4 with maximal pole order at one of the stacky points.  Therefore, these bounds cannot in general be reduced further to 
			$e'' := \max(3, e_1, \ldots, e_r)$ and $2e''$.
			\end{rem}
	\item While Theorem ~\ref{thm:main} gives a set of 
		generators and relations
		for the log spin canonical ring $R_\halfcan,$ these sets are not necessarily minimal.
		In many of the $g = 0$ and $g = 1$ cases, 
		it is not too difficult to see that our inductive procedure
		yields a minimal set of relations for $R_\halfcan$. One might investigate 
		whether the generators and relations given by the inductive proof 
		of Theorem ~\ref{thm:main} are always minimal.
		\end{enumerate}

%%%%%%%%%%%%%%%%%%%%%%%%% Acknowledgements %%%%%%%%%%%%%%%%%%%%%%%%%%%%

\section{Acknowledgments}
We are grateful to David Zureick-Brown for introducing us to the
study of stacky canonical rings, for providing invaluable guidance,
and for his mentorship. We also thank Ken Ono and the Emory
University Number Theory REU for arranging our project and
providing a great environment for mathematical learning and
collaboration. We thank Brian Conrad, John Voight, and Shou-Wu
Zhang for helpful comments on this paper and thank Jorge Neves
for pointing out the bounds given by Miles Reid's work on the
maximal degrees of minimal generators and relations of spin
canonical rings in high genera.
We thank Miles Reid for clarifying how to
find the Hilbert series of stacky curves.
The first author would also like to
mention Joseph Harris and Evan O'Dorney for several enlightening
conversations related to Petri's Theorem. Finally, we gratefully
acknowledge the support of the National Science Foundation (grant
number DMS-1250467). We deeply appreciate all of the support that
has made our work possible.

%%%%%%%%%%%%%%%%%%%%%%%%%%%% References %%%%%%%%%%%%%%%%%%%%%%%%%%%%%%%

\nocite{*}
\bibliography{bibliography-arXiv-v3}{}
\bibliographystyle{alpha}

\end{document}